\def\toP{{\scriptstyle\,\buildrel{\P}\over{\hbox to
0.6cm{\rightarrowfill}}\,}}
\newcommand{\R}{{\mathbb R}}
\newcommand{\BR}{{\mathbb R}}
\newcommand{\sgn}{\mbox{\rm sgn}}
\newcommand{\N}{{\mathbb N}}
\newcommand{\LL}{{\mathbb L}}
\newcommand{\MM}{{\cal M}}
\newcommand{\arrowp}{\mathop{\rightarrow}_{P}}
\newcommand{\Rd}{{{{\mathbb R}^d}}}
\newcommand{\FF}{{\cal F}}
\newcommand{\esssup}{\mathop{\mathrm{ess\,sup}}}
\newtheorem{theorem}{\bf Theorem}[subsection]
\newtheorem{proposition}[theorem]{\bf Proposition}
\newtheorem{lemma}[theorem]{\bf Lemma}
\newtheorem{corollary}[theorem]{\bf Corollary}
\theoremstyle{definition}
\newtheorem{definition}[theorem]{Definition}
\newtheorem{remark}[theorem]{\bf Remark}
\newcommand{\nsubsection}{\setcounter{equation}{0}\subsection}
\begin{document}
\title{Reflected BSDEs with regulated trajectories}
\author{Tomasz Klimsiak, Maurycy Rzymowski and Leszek S\l omi\'nski}
\date{}
\maketitle
\begin{abstract}
We consider reflected backward stochastic different equations with
optional barrier and so-called regulated trajectories, i.e
trajectories with left and right finite limits. We prove existence
and uniqueness results. We also show that the solution may be
approximated by a modified penalization method. Application to an
optimal stopping problem is given.
\end{abstract}
{\em MSC 2000 subject classifications:} primary 60H10; secondary
60G40.
\medskip\\
{\em Keywords:} Reflected backward stochastic differential
equation, processes with regulated trajectories, modified
penalization method, optimal stopping problem.

\footnotetext{T. Klimsiak: Institute of Mathematics, Polish
Academy of Sciences, \'Sniadeckich 8, 00-956 Warszawa, Poland, and
Faculty of Mathematics and Computer Science, Nicolaus Copernicus
University, Chopina 12/18, 87-100 Toru\'n, Poland. E-mail:
tomas@mat.umk.pl}

\footnotetext{M. Rzymowski and L. S\l omi\'nski: Faculty of
Mathematics and Computer Science, Nicolaus Copernicus University,
Chopina 12/18, 87-100 Toru\'n, Poland. E-mails:
maurycyrzymowski@mat.umk.pl, leszeks@mat.umk.pl}

\nsubsection{Introduction} \label{sec1}

In the present paper we consider reflected backward stochastic
differential equations (RBSDEs for short) with Brownian
filtration, one barrier and $L^p$-data, $p\in[1,2]$. The main
novelty is that we only assume that the barrier is optional. As a
consequence the solutions of these equations need not be
c\`adl\`ag, but are so-called regulated processes, i.e. processes
whose trajectories have left and right finite limits. Our
motivation for studying such general equations comes from the
optimal stopping theory (see \cite{EK,GIOOQ,KQ,KQC}).

Let  $B$ be a standard $d$-dimensional Brownian motion  and let
$\mathbb{F}=\{\FF_t,t\in [0,T]\}$ be the standard augmentation of
the natural filtration generated by $B$. Suppose we are given an
$\mathbb{F}$-optional process $L=\{L_t,t\in[0,T]\}$, an
$\mathbb{F}$-adapted locally bounded variation process
$V=\{V_t,t\in[0,T]\}$, an $\FF_T$-measurable random variable $\xi$
such that $\xi\ge L_T$ (the terminal value) and  a measurable
function $f:[0,T]\times\Omega\times\R\times\R^{d}\to\R$
(coefficient). In the paper we consider RBSDEs  with barrier $L$
of the form
\begin{equation}
\label{eq1.1} Y_t=\xi+\int^T_tf(s,Y_s,Z_s)\,ds +\int_t^T\,dK_s
+\int_t^T\,dV_s-\int^T_t  Z_s\,dB_s,\quad t\in[0,T].
\end{equation}
Roughly speaking, by a solution to (\ref{eq1.1}) we understand a
triple $(Y,Z,K)$ of $\mathbb{F}$-progressively measurable
processes such that (\ref{eq1.1}) is satisfied, $Y$ has regulated
trajectories,
\begin{equation}
\label{eq1.6} Y_t\ge L_t,\quad t\in[0,T],
\end{equation}
and $K$ is an increasing process such $K_0=0$ satisfying some
minimality condition (see (\ref{eq1.5}) below). In case $L$ is
c\`adl\`ag this condition reads
\begin{equation}
\label{eq1.4} \int_0^T(Y_{t-}-L_{t-})\,dK_t=0.
\end{equation}
An important known result is  (see \cite{HHO}) that  for
c\`adl\`ag barrier  the solution $(Y,Z,K)$ of
(\ref{eq1.1})--(\ref{eq1.4}) leads to the solution of the
following  optimal stopping problem
\begin{equation}
\label{eqi.1} Y_t=\mathrm{ess}\sup_{\tau\in\Gamma_t}
E\Big(\int_t^\tau f(s,Y_s,Z_s)\,ds+\int_t^\tau\,dV_s +L_{\tau}{\bf
1}_{\{\tau<T\}}+\xi{\bf 1}_{\{\tau=T\}}|{\cal F}_t\Big),
\end{equation}
where $\Gamma_t$ is the set of all $\mathbb{F}$-stopping times
takin values in $[t,T]$. In case $L$ is not c\`adl\`ag, the
problem of right formulation of the minimal condition is more
complicated. Of course,  the minimal condition must ensure
uniqueness of solutions under reasonable assumptions on $f$. On
the other hand, we want (\ref{eqi.1}) to be satisfied. In the
present paper, for optional barrier $L$, we propose the following
minimality condition for $K$:
\begin{equation}
\label{eq1.5} \int^T_0(Y_{s-}-\limsup_{u\uparrow s}L_u)\,dK^*_s
+\sum_{s<T}(Y_s-L_s)\Delta^+K_s=0,
\end{equation}
where $K^*$ is the c\`adl\`ag part of process $K$ and
$\Delta^+K_t=K_{t+}-K_t$ (i.e. $\Delta^+K_t$ is the right-side
jump of $K$). Under this condition $(Y,Z)$ satisfies
(\ref{eqi.1}). Note that if $L$ and $K$ are c\`adl\`ag, then
(\ref{eq1.5}) reduces to (\ref{eq1.4}).

The fundamental results on RBSDEs  with Brownian filtration, one
continuous barrier and $L^2$-data were obtained in \cite{EKPPQ}.
These results were generalized to equations with two continuous
barriers in \cite{CK,HH}. Equations with continuous barriers and
$L^p$-data with $p\in[1,2)$ were studied for instance in
\cite{EHW,HP,kl1,RS2}. In most papers devoted to RBSDEs with
possibly discontinuous barriers it is assumed that the barriers
are c\`adl\`ag (see, e.g., \cite{HHO,LX,LX1}  and the references
therein). In \cite{PX} (the case $p=2$) and in \cite{kl} (the case
$p\in[1,2]$) progressively measurable barriers are considered. In
these papers the minimality condition for $K$ differs from
(\ref{eq1.4}) and from (\ref{eq1.5}), and what is more important
here, the first component $Y$ of the solution of (\ref{eq1.1})
need not satisfy (\ref{eq1.6}), but satisfies weaker condition
saying that $Y_t\ge L_t$ for a.e. $t\in [0,T]$. A serious drawback
to the last condition is that it does not lead to (\ref{eqi.1}).
In fact, in case $f=0$ and $V=0$, the first component $Y$ of the
solution of (\ref{eq1.1}) defined in \cite{kl,PX} is the strong
envelope of $L$ (for the notion of strong envelope see \cite{SZ}).
It is worth noting, however,  that the definition of a solution of
(\ref{eq1.1})  adopted in \cite{kl,PX} is suitable for
applications to the obstacle problem for parabolic PDEs (see
\cite{KR:JEE}).

The our knowledge, the paper by Grigorova at al. \cite{GIOOQ} is
the only paper dealing with RBSDEs with barriers that are not
c\`adl\`ag, and whose solution satisfies (\ref{eq1.6}) and
(\ref{eqi.1}). In the present paper we prove existence and
uniqueness results for (\ref{eq1.1}) which generalize  the
corresponding results of \cite{GIOOQ} in several directions. First
of all, we impose no regularity assumptions on $L$ (in
\cite{GIOOQ} it is assumed that $L$ is left-limited and right
upper-semicontinuous). Secondly,  we consider the case of
$L^p$-data with $p\ge 1$ (in \cite{GIOOQ} only the case of $p=2$
is considered). As for the generator, we assume that it is
Lipschitz continuous with respect to $z$ and only continuous and
monotone with respect to $y$ (in \cite{GIOOQ} it is assumed that
$f$ is Lipschitz continuous with respect to $y$ and $z$). Let us
also stress that the proofs of our results are totally different
from those of \cite{GIOOQ}. Our main new idea is to reduce the
problem for optional barriers to the problem for c\`adl\`ag
barriers.

In Section \ref{sec4} we consider the problem of approximation of
solutions of (\ref{eq1.1}) by solutions of usual BSDEs (this
problem was not considered in \cite{GIOOQ}). We show that the
solution of (\ref{eq1.1}) is the increasing limit of the sequence
$\{Y^n\}$ of solutions of the following penalized BSDEs
\begin{align*}
Y^n_t&=\xi+\int^T_tf(s,Y^n_s,Z^n_s)\,ds +\int^T_t\,dV_s
-\int^T_t Z^n_s\,dB_s\\
&\qquad+n\int_t^T(Y^n_s-L_s)^-ds +\sum_{t\leq
\sigma_{n,i}<T}(Y^n_{\sigma_{n,i}+}
+\Delta^+V_{\sigma_{n,i}}-L_{\sigma_{n,i}})^-,\quad t\in[0,T]
\end{align*}
with specially defined arrays of stopping times
$\{\{\sigma_{n,i}\}\}$ exhausting  right-side jumps of $L$ and
$V$. If $L, V$ are c\`adl\`ag then the term involving the
right-side jumps vanishes and our penalization scheme reduces to
the usual penalization for BSDEs with c\`adl`ag trajectories.

\nsubsection{Preliminaries}

Recall that a function $y:[0,T]\to\Rd$ is called regulated if for
every $t\in[0,T)$  the limit $y_{t+}=\lim_{u\downarrow t}y_u$
exists, and for every $s\in(0,T]$ the limit
$y_{s-}=\lim_{u\uparrow s}y_u$ exists. For any regulated function
$y$ on $[0,T]$ we set $\Delta^{+}y_t=y_{t+}-y_t$ if $0\leq t<T$,
and  $\Delta^{-}y_s=y_s-y_{s-}$ if  $0<s\leq T$  with the
convention that $\Delta^{+}y_T$ $=\Delta^{-}y_0$ $=0$  and $\Delta
y_t=\Delta^+y_t+\Delta^-y_t$, $t\in[0,T]$. It is known that each
regulated function is bounded and has at most countably many
discontinuities (see, e.g., \cite[Chapter 2, Corollary 2.2]{dn}).

For $x\in\R^d$, $z\in\R^{d\times n}$ we set
$|x|^2=\sum^d_{i=1}|x_i|^2$, $\|z\|^2=\mbox{trace}(z^*z)$.
$\langle\cdot,\cdot\rangle$ denotes the usual scalar product in
$\R^d$  and $\sgn(x)={\bf 1}_{\{x\neq0\}}{x}/{|x|}$.

By $L^p$, $p>0$, we denote the space of random variables $X$ such
that $\|X\|_p\equiv E(|X|^p)^{1\vee1/p}<\infty$. By $\mathcal{S}$
we denote the set of all $\mathbb{F}$-progressively measurable
processes with regulated trajectories, and by $\mathcal{S}^p$,
$p>0$, the subset of  $Y\in\mathcal{S}$ such that $E\sup_{0\le
t\le T}|Y_t|^p<\infty$. $\mathcal{H}$ is the set of
$d$-dimensional $\mathbb{F}$-progressively measurable processes
$X$ such that
\[
P\Big(\int^T_0|X_t|^2\,dt<\infty\Big)=1,
\]
and $\mathcal{H}^p$, $p>0$, is the set of all $X\in\mathcal{H}$
such that $\|X\|_{\mathcal
{H}^p}\equiv\|(\int_0^T|X_s|^2\,ds)^{1/2}\|_p<+\infty$.

We say that an $\mathbb{F}$-progressively measurable process $X$
is of class (D) if the family $\{X_{\tau},\,\tau\in\Gamma\}$ is
uniformly integrable, where $\Gamma$ is the set of all
$\mathbb{F}$-stopping times taking values in $[0,T]$. We equip the
space of processes of class (D) with the norm
$\|X\|_D=\sup_{\tau\in\Gamma}E|X_{\tau}|$.

For $\tau\in\Gamma$, by $[[\tau]]$ we denote the set
$\{(\omega,t):\,\tau(\omega)=t\}$. A sequence
$\{\tau_k\}\subset\Gamma$ is called stationary if
\[
\forall{\omega\in\Omega}\quad
\exists{n\in\mathbb{N}}\quad\forall{k\ge n}\quad\tau_k(\omega)=T.
\]
$\mathcal{M}_{loc}$ (resp. $\mathcal{M}$) is the set of all
$\mathbb{F}$-martingales (resp. local martingales) $M$ such that
$M_0=0$. $\MM^p$, $p\ge1$, denotes the space of all $M\in \MM$
such that
\[
E([M]_T)^{p/2}<\infty,
\]
where $[M]$ stands for  the quadratic variation of $M$.

$\mathcal{V}$ (resp. $\mathcal{V}^+$) denotes the space of
$\mathbb{F}$-progressively measurable process of finite variation
(resp. increasing) such that $V_0=0$,  and $\mathcal{V}^p$ (resp.
$\mathcal{V}^{+,p}$), $p\ge1$,  is the set of processes
$V\in\mathcal{V}$ (resp. $V\in\mathcal{V}^+$) such that
$E|V|^p_T<\infty$, where $|V|_T$ denotes the total variation of
$V$ on $[0,T]$.   For $V\in \mathcal{V}$, by $V^*$ we denote the
c\`adl\`ag part of the process $V$, and by  $V^d$ its purely
jumping part consisting of right jumps, i.e.
\[
V^d_t=\sum_{s<t}\Delta^+V_s,\quad V^*_t=V_t-V^d_t,\quad t\in
[0,T].
\]

Let $V^1,V^2\in \mathcal{V}$. We write $dV^1\le dV^2$ if
$dV^{1,*}\le dV^{2,*}$ and $\Delta^+V^1\le\Delta^+V^2$ on $[0,T]$.

In the whole paper all relations between random variables hold
$P$-a.s. For process $X$, $Y$ we  write $X\le Y$ if $X_t\le Y_t$,
$t\in[0,T]$. For a given optional process  $L$ of class  (D) we
set
\[
\mathrm{Snell}(L)_t=\esssup_{\tau\in\Gamma_t}E( L_{\tau}|\mathcal{F}_t),
\]
where $\Gamma_t$ is the set of all stopping times taking values in
$[t,T]$. From \cite{EK} it follows that the process Snell$(L)$ is
the smallest supermartingale dominating the process $L$.

We will need the following assumptions.
\begin{enumerate}
\item[(H1)]There is $\lambda\ge0$ such that
$|f(t,y,z)-f(t,y,z')|\le\lambda|z-z'|$ for all $t\in[0,T]$,
$y\in\R$, $z,z'\in\Rd$,
\item[(H2)] there is $\mu\in\R$ such that
$(y-y')(f(t,y,z)-f(t,y',z))\leq\mu(y-y')^2$ for all $t\in[0,T]$,
$y,y'\in\R$, $z\in\Rd$.
\item[(H3)] $\xi,\, \int_0^T|f(r,0,0)|\,dr,\, |V|_T\in L^p$,
\item[(H4)] for every $(t,z)\in[0,T]\times\mathbb{R}^d$ the
mapping $\mathbb{R}\ni y\rightarrow f(t,y,z)$ is continuous,
\item[(H5)] $[0,T]\ni t\mapsto f(t,y,0)\in L^1(0,T)$ for every $y\in\mathbb{R}$,
\item[(H6)] there exists a process $X$ such that
$E\sup_{0\le t\le T}|X_t|^p<\infty$,
$X\in\mathcal{M}_{loc}+\mathcal{V}^p$, $X\ge L$ and $\int_0^T
f^-(s,X_s,0)\,ds\in L^p$,
\item[(H6*)] there exists a process $X$ of class (D) such that
$X\in\mathcal{M}_{loc}+\mathcal{V}^1$, $X\ge L$ and $\int^T_0
f^-(s,X_s,0)\,ds\in L^1$,
\item[(Z)] there exists a progressively measurable process $g$
and $\gamma\ge 0,\, \alpha\in [0,1)$ such that
\[
|f(t,y,z)-f(t,y,0)|\le\gamma(g_t+|y|+|z|)^\alpha,
\quad t\in [0,T], \,y\in\BR,\, z\in\BR^d.
\]
\end{enumerate}

\begin{definition}
We say that a pair $(Y,Z)$ of $\mathbb{F}$-progressively
measurable processes  is a solution of BSDE with right-hand side
$f+dV$ and terminal condition $\xi$ (BSDE($\xi$,$f+dV$) in short)
if
\begin{enumerate}[{\rm(a)}]
\item$(Y,Z)\in \mathcal{S}^p\times\mathcal{H}$ for some
$p>1$ or $Y$ is of class {\rm{(D)}} and $Z\in\mathcal{H}^q$ for
$q\in (0,1)$,
\item$\int^T_0|f(s,Y_s,Z_s)|\,ds<\infty$,
\item$Y_t=\xi+\int^T_t f(s,Y_s,Z_s)\,ds+\int_t^T\,dV_s
-\int^T_t Z_s\,dB_s$, $t\in[0,T]$.
\end{enumerate}
\end{definition}

Theorems \ref{tw1.k1} and \ref{tw1.k2} below were proved in
\cite[Section 4]{kl} in case  $V$ is c\`adl\`ag. In the general
case, i.e. if $V\in\mathcal{V}$, their proofs go without any
changes. The only difference is that we use It\^o's formula for
regulated processes (see Appendix) instead  the usual It\^o's
formula.
\begin{theorem}
\label{tw1.k1} Let $p>1$. If  $\mbox{\rm{(H1)--(H5)}}$ are
satisfied then there exists a unique solution $(Y,Z)$ of
\textnormal{BSDE}$(\xi$,$f+dV)$. Moreover, $Z\in\mathcal{H}^p$ and
$E(\int^T_0|f(s,Y_s,Z_s)|\,ds)^p<\infty$.
\end{theorem}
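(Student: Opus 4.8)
The plan is to follow the standard monotone-generator method for $L^p$ BSDEs, running every estimate through the It\^o formula for regulated processes (see Appendix) in place of the classical one, so that the right jumps of $V$ are correctly accounted for; this is precisely the route of \cite[Section 4]{kl}. I would organize the argument as uniqueness plus a priori estimates, existence under an auxiliary Lipschitz hypothesis, and removal of that hypothesis by approximation.

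First I would treat uniqueness together with the basic energy estimate. Given two solutions $(Y^1,Z^1)$ and $(Y^2,Z^2)$, the difference $\delta Y=Y^1-Y^2$, $\delta Z=Z^1-Z^2$ solves a BSDE with zero terminal value and no $dV$ term, since $\xi$ and $V$ cancel. Applying the regulated It\^o formula to $|\delta Y_t|^2$ (respectively to $(|\delta Y_t|^2+\epsilon)^{p/2}$ when $p<2$), one dominates the $y$-increment of the generator by (H2) and the $z$-increment by (H1) combined with Young's inequality, thereby absorbing the quadratic variation of $\int\delta Z\,dB$; Gronwall's lemma then forces $\delta Y\equiv0$ and hence $\delta Z\equiv0$. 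The same computation applied to a single solution yields the estimate underlying the claims $Z\in\mathcal{H}^p$ and $E(\int_0^T|f(s,Y_s,Z_s)|\,ds)^p<\infty$.

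For existence I would first add the hypothesis that $f$ is globally Lipschitz in $y$. Then the map sending $(U,W)\in\mathcal{S}^p\times\mathcal{H}^p$ to the pair $(Y,Z)$ defined by $Y_t=E(\xi+\int_t^T f(s,U_s,W_s)\,ds+\int_t^T dV_s\,|\,\mathcal{F}_t)$, with $Z$ extracted from the martingale representation of the associated martingale, is a contraction on $\mathcal{S}^p\times\mathcal{H}^p$ for a suitable exponentially weighted norm. The forcing term $\int_t^T dV_s=V_T-V_t$ enters only as a fixed inhomogeneity, so it does not affect the contraction constant and is controlled in $L^p$ by (H3); its unique fixed point is the desired solution. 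To remove the auxiliary assumption, I would approximate $f$ in the $y$-variable by generators $f_n$ that are Lipschitz in $y$, preserve (H1)--(H2), and increase to $f$ by (H4) (e.g.\ the inf-convolutions $f_n(t,y,z)=\inf_{u}[f(t,u,z)+n|y-u|]$). The solutions $(Y^n,Z^n)$ are uniformly bounded in $\mathcal{S}^p\times\mathcal{H}^p$ by the estimates above, the stability estimate shows $\{(Y^n,Z^n)\}$ is Cauchy, and passing to the limit in the equation via (H4), (H5) and dominated convergence gives a solution with the stated integrability.

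The main obstacle is the range $1<p<2$, where $y\mapsto|y|^p$ is not $C^2$ at the origin: the It\^o estimates must be carried out on the smoothings $(|y|^2+\epsilon)^{p/2}$ with a careful passage $\epsilon\downarrow0$, and the Burkholder--Davis--Gundy inequality must convert the martingale term into an $\mathcal{H}^p$-bound on $Z$ without first assuming $Z\in\mathcal{H}^2$. It is exactly here that the regulated It\^o formula matters, since the right jumps $\Delta^+V_s$ contribute extra finite-variation terms to the energy identity; these are sign-controlled and integrable thanks to $E|V|_T^p<\infty$, so they do not disturb the estimates. Once this $L^p$ machinery is established, the fixed-point and monotone-approximation steps are routine.
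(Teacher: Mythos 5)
Your overall architecture (uniqueness and a priori bounds via the regulated It\^o formula plus Gronwall; existence by Picard iteration under an auxiliary Lipschitz condition in $y$; removal of that condition by approximation) is the same skeleton as the proof the paper relies on: the theorem is not proved from scratch in the paper, but by the remark that the argument of \cite[Section 4]{kl} (the case of c\`adl\`ag $V$) goes through verbatim once the classical It\^o formula is replaced by the formula for regulated processes from the Appendix. Your first two steps are consistent with that route, and you correctly identify where the regulated It\^o formula enters (the right jumps of $V$ appear as extra finite-variation terms in the energy identities, controlled by $E|V|_T^p<\infty$).

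The genuine gap is in your third step. The inf-convolution $f_n(t,y,z)=\inf_u\big[f(t,u,z)+n|y-u|\big]$ is finite only if $y\mapsto f(t,y,z)$ is bounded below by a function of linear decay with slope at most $n$. Hypotheses (H2), (H4), (H5) give no such bound: the generator $f(t,y,z)=-y^3$ satisfies (H1) (with $\lambda=0$), (H2) (with $\mu=0$), (H4) and (H5), and yet $f_n\equiv-\infty$ for every $n$. So your approximating sequence does not exist precisely in the regime (continuous, monotone, superlinear in $y$) that distinguishes this theorem from the Lipschitz case. The standard repair, and in effect what \cite{kl} and \cite{bdh} do, is to approximate in a manner compatible with monotonicity but requiring no lower growth bound: either truncate the $y$-argument by composing with the projection onto a ball (which preserves (H1) and, after the exponential reduction to $\mu\le0$ of Remark \ref{rem1}, also (H2)), combined with a priori bounds on $Y$ and a mollification step, or use the Yosida (resolvent) regularization $f_\alpha(t,y,z)=f\big(t,(I-\alpha f(t,\cdot,z))^{-1}(y),z\big)$, which is globally Lipschitz and monotone whatever the growth of $f$. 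Relatedly, your claim that ``the stability estimate shows $\{(Y^n,Z^n)\}$ is Cauchy'' conceals the actual difficulty: once the limit generator is only monotone, the difference of generators along the approximating solutions is not controlled by the contraction-type estimate, and one must instead use comparison together with monotone-limit (Dini-type) arguments, or local uniform convergence of $f_n$ to $f$ combined with the a priori bounds, to pass to the limit in the equation.
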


\begin{theorem}
\label{tw1.k2} Let $p=1$. If \textnormal{(H1)--(H5), (Z)} are
satisfied then there exists a unique solution $(Y,Z)$ of
\textnormal{BSDE($\xi$,$f+dV$)}. Moreover, $Y\in\mathcal{S}^q$ for
every $q\in(0,1)$ and $E\int^T_0|f(s,Y_s,Z_s)|\,ds <\infty$.
\end{theorem}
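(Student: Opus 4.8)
The plan is to reduce the case $p=1$ to the case $p>1$ already treated in Theorem \ref{tw1.k1}, following the classical $L^1$ theory of BSDEs. I would first approximate the data: put $\xi_n=(\xi\wedge n)\vee(-n)$, replace the free term $f(s,0,0)$ by its truncation so that $f_n(s,y,z)=f(s,y,z)-f(s,0,0)+\theta_n\big(f(s,0,0)\big)$ with $\theta_n$ the truncation at level $n$, and stop $V$ at $\tau_n=\inf\{t:|V|_t\ge n\}$ to obtain $V^n\in\mathcal{V}^p$. Each $f_n$ still satisfies (H1), (H2), (H4) and (Z) with the same constants, while now $\xi_n$, $\int_0^T|f_n(s,0,0)|\,ds$ and $|V^n|_T$ lie in $L^p$ for every $p>1$, so Theorem \ref{tw1.k1} provides a unique solution $(Y^n,Z^n)\in\mathcal{S}^p\times\mathcal{H}^p$ of BSDE$(\xi_n,f_n+dV^n)$. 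The task is then to show that $\{(Y^n,Z^n)\}$ converges in $\mathcal{S}^q\times\mathcal{H}^q$, for every $q\in(0,1)$, to a solution of BSDE$(\xi,f+dV)$.

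The core of the argument is a family of a priori estimates depending only on the $L^1$ norms of $\xi$, $\int_0^T|f(s,0,0)|\,ds$ and $|V|_T$. Applying the It\^o formula for regulated processes from the Appendix to $t\mapsto|Y^n_t|$, the sign term produces $\sgn(Y^n_s)f_n(s,Y^n_s,Z^n_s)$; decomposing $f_n(s,Y^n_s,Z^n_s)$ into its $z$-increment, its $y$-increment and the free term, I would bound the $y$-increment above by $\mu|Y^n_s|$ via the one-sided inequality in (H2), the $z$-increment by $\gamma(g_s+|Y^n_s|+|Z^n_s|)^\alpha$ via (Z), while the right-hand jumps of $V^n$ are dominated by $d|V^n|_s$. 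This already yields a class (D) bound for $Y^n$ uniform in $n$. The genuinely delicate step is the estimate for $Z^n$ in $\mathcal{H}^q$: here I would apply It\^o to the smoothed fractional power $(\epsilon+|Y^n_s|^2)^{q/2}$, whose second-order term returns a positive multiple of $|Z^n_s|^2(\epsilon+|Y^n_s|^2)^{q/2-1}\,ds$, and, since $\alpha<1$, Young's inequality lets me absorb the sublinear $|Z^n_s|^\alpha$ contribution into this quadratic term, closing the bound on $E\big(\int_0^T|Z^n_s|^2\,ds\big)^{q/2}$ with constants uniform in $n$.

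Running the same computation for the differences $Y^n-Y^m$ shows that $\{(Y^n,Z^n)\}$ is Cauchy in $\mathcal{S}^q\times\mathcal{H}^q$; denote its limit by $(Y,Z)$, which is then of class (D) with $Z\in\mathcal{H}^q$ and $Y\in\mathcal{S}^q$ for every $q\in(0,1)$. Passing to the limit in the integral equation, using (H4), (H1) and dominated convergence along a subsequence, gives part (c) of the definition of solution. Finally the integrability $E\int_0^T|f(s,Y_s,Z_s)|\,ds<\infty$ is obtained by controlling the $z$-dependence through (Z), so that $|f(s,Y_s,Z_s)-f(s,Y_s,0)|\le\gamma(g_s+|Y_s|+|Z_s|)^\alpha$ is integrable by H\"older once $Y$ is of class (D) and $Z\in\mathcal{H}^q$ with $q\in(\alpha,1)$, while the term $\int_0^T f(s,Y_s,0)\,ds$ is kept integrable along the approximation by the monotonicity (H2) and the uniform estimates of the previous step.

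Uniqueness I would establish directly. For two solutions $(Y,Z)$ and $(Y',Z')$ the process $V$ cancels, so $\delta Y=Y-Y'$ solves a BSDE with zero terminal value and generator $f(s,Y_s,Z_s)-f(s,Y'_s,Z'_s)$. Applying the regulated It\^o formula to $|\delta Y_t|$, the monotonicity (H2) bounds the $y$-increment above by $\mu|\delta Y_s|$ and the Lipschitz hypothesis (H1) bounds the $z$-increment by $\lambda|\delta Z_s|$; writing this increment as $\langle\beta_s,\delta Z_s\rangle$ with $|\beta_s|\le\lambda$ and changing measure by Girsanov (with a localization to handle the $L^1$ integrability) removes the martingale part, after which Gronwall forces $\delta Y\equiv0$, and then $\delta Z\equiv0$. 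The main obstacle throughout is the $\mathcal{H}^q$ estimate for $Z$ with $q<1$ together with the integrability of the generator at the limit; both rest on the sublinear growth condition (Z), and the only new feature compared with the c\`adl\`ag case is that one must use the regulated version of the It\^o formula so as to account for the right-hand jumps of $V$, which however contribute only finite-variation terms dominated by $d|V|$ and therefore affect none of the estimates.
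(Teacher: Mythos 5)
Your existence plan (truncate the data, solve via Theorem \ref{tw1.k1}, prove estimates uniform in $n$, pass to the limit) is in outline the route the paper relies on: its proof of this theorem is simply the citation of \cite[Section 4]{kl} together with the remark that for $V\in\mathcal{V}$ one replaces the classical It\^o formula by the regulated one from the Appendix. However, the step you yourself call the delicate one fails as stated. For $q\in(0,1)$ the map $y\mapsto(\epsilon+|y|^2)^{q/2}$ is \emph{not} convex: in dimension one its second derivative equals $q(\epsilon+y^2)^{q/2-2}\bigl(\epsilon+(q-1)y^2\bigr)$, which is negative as soon as $y^2>\epsilon/(1-q)$. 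Hence the second-order It\^o term does not return a positive multiple of $|Z^n_s|^2(\epsilon+|Y^n_s|^2)^{q/2-1}\,ds$, there is no favorable quadratic term into which Young's inequality could absorb the sublinear contribution, and your $\mathcal{H}^q$ bound for $Z^n$ has no derivation. In the $L^1$ theory this bound comes from a different device, namely \cite[Lemma 3.1]{bdh} (invoked by the present paper in the proof of Theorem \ref{tw2.16}): one stops at $\tau_\lambda=\inf\{t:\int_0^t|Z^n_s|^2\,ds\ge\lambda^2\}$, uses the $L^2$ computation on $[0,\tau_\lambda]$ together with the class (D) norm of $Y^n$ to bound $P\bigl(\int_0^T|Z^n_s|^2\,ds>\lambda^2\bigr)$, and integrates this tail in $\lambda$. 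Relatedly, the Cauchy step cannot be ``the same computation'': for differences, the $z$-increment is bounded either by $\lambda|Z^n_s-Z^m_s|$, which is not integrable at $p=1$, or via (Z) by sublinear terms that do not become small as $n,m\to\infty$; this is why \cite{kl} passes to the limit through monotone approximations and monotone-limit theorems of the type reproduced here as Lemma \ref{kl4.11} and Theorem \ref{kl4.12}, rather than through a Cauchy estimate. (A smaller, fixable point: stopping $V$ at $\inf\{t:|V|_t\ge n\}$ does not put $|V^n|_T$ into $L^p$, $p>1$, since the jump at the stopping time is left untruncated.)

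The most serious gap is uniqueness: Girsanov plus Gronwall is the $p>1$ argument, and it genuinely breaks at $p=1$; your localization does not repair it. After passing to $dQ=\mathcal{E}(\int\beta\,dB)_T\,dP$ you must conclude $\delta Y\equiv 0$ from $\delta Y_T=0$, which requires $\delta Y$ --- or, after localizing at $\sigma_k=\inf\{t:\int_0^t|\delta Z_s|^2\,ds\ge k\}\wedge T$, the family $\{\delta Y_{\sigma_k}\}$ --- to be uniformly integrable \emph{under} $Q$. What you have is class (D) under $P$; the density $\mathcal{E}(\int\beta\,dB)_T$ is unbounded (it lies in every $L^r$, $r<\infty$, but not in $L^\infty$), and a $P$-uniformly integrable family multiplied by an unbounded density need not be $Q$-uniformly integrable, so $E_Q|\delta Y_{\sigma_k}|$ need not vanish although $E_P|\delta Y_{\sigma_k}|\to 0$. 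For $p>1$ H\"older's inequality closes exactly this hole; at $p=1$ nothing does, and this obstruction is the very reason hypothesis (Z) appears in the statement --- note that your uniqueness step never uses (Z). The argument that works, and the one underlying the paper (it appears here as Remark \ref{uw2.0} combined with Proposition \ref{prop2.2}), runs in the opposite direction: by Corollary \ref{cor4}, (H2) (with $\mu=0$ after the exponential transform of Remark \ref{rem1}) and (Z),
\[
(Y_t-Y'_t)^+\le 2\gamma\,E\Bigl(\int_0^T\bigl(g_s+|Y_s|+|Y'_s|+|Z_s|+|Z'_s|\bigr)^{\alpha}\,ds\Bigm|\mathcal{F}_t\Bigr),
\]
and since $Z,Z'\in\mathcal{H}^q$ for some $q\in(\alpha,1)$, Doob's maximal inequality with exponent $p=q/\alpha>1$ shows $(Y-Y')^+\in\mathcal{S}^p$; one then runs the $p>1$ comparison argument of Proposition \ref{prop2.2} (It\^o applied to $((Y-Y')^+)^p$, with no change of measure and no transfer of the class (D) property) to get $Y\le Y'$, obtains $Y=Y'$ by symmetry, and recovers $Z=Z'$ from the martingale representation. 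Your proof needs this use of (Z), or an equivalent one, in place of the Girsanov step.
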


Now we recall the definition of a solution of the reflected BSDE
in the class of c\`adl\`ag processes and results about existence
and uniqueness. Theorems \ref{tw1.kl5} and \ref{tw1.kl6} below
were proved in \cite{kl}.

\begin{definition} Assume that  $L, V$ are  c\`adl\`ag
processes. We say that  a triple  $(Y,Z,K)$ of
$\mathbb{F}$-progressively measurable processes is a solution of
reflected BSDE with right-hand side $f+dV$, terminal condition
$\xi$ and lower barrier $L$ ({\rm RBSDE(}$\xi$,$f+dV$,$L${\rm)} in
short) if
\begin{enumerate}[{\rm(a)}]
\item $(Y,Z)\in \mathcal{S}^p\times\mathcal{H}$ for some
$p>1$ or $Y$ is of class {\rm (D)} and $Z\in\mathcal{H}^q$ for
$q\in (0,1)$,
\item $K\in\mathcal{V}^+$ is c\`adl\`ag, $Y_t\ge L_t$, $t\in [0,T]$, and
$\int^T_0(Y_{s-}-L_{s-})\,dK_s=0$,
\item $\int^T_0|f(s,Y_s,Z_s)|\,ds<\infty$,
\item $Y_t=\xi+\int^T_t f(s,Y_s,Z_s)\,ds+\int_t^T\,dV_s+\int^T_t\,dK_s
-\int^T_t Z_s\,dB_s$, $t\in[0,T]$.
\end{enumerate}
\end{definition}

\begin{theorem}
\label{tw1.kl5} Let $p>1$ and $\mbox{\rm{(H1)--(H6)}}$ be
satisfied. Then  there exists a unique solution $(Y,Z,K)$ of {\rm
RBSDE($\xi$,$f+dV$,$L$)}. Moreover, $(Y,Z,K)
\in\mathcal{S}^p\otimes\mathcal{H}^p\otimes\mathcal{V}^{+,p}$ and
$E(\int^T_0|f(s,Y_s,Z_s)|\,ds)^p<\infty$.
\end{theorem}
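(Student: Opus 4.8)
The plan is to establish uniqueness by a direct a priori estimate and existence by a penalization scheme, exploiting the monotonicity and Lipschitz structure of $f$ together with the dominating process supplied by (H6). Throughout I would rely on the It\^o formula for regulated processes recorded in the Appendix, rather than the classical one, since $V$ (hence $Y$) may have jumps.

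For uniqueness, suppose $(Y^1,Z^1,K^1)$ and $(Y^2,Z^2,K^2)$ are two solutions and set $\delta Y=Y^1-Y^2$, $\delta Z=Z^1-Z^2$, $\delta K=K^1-K^2$. Applying the It\^o formula to a smooth approximation of $x\mapsto|x|^p$ (or to $|x|^2$ with the general case handled by truncation and stopping), the contribution of the reflecting terms is
$$\int_t^T(\delta Y)_{s-}\,d(\delta K)_s=\int_t^T(Y^1_{s-}-Y^2_{s-})\,dK^1_s-\int_t^T(Y^1_{s-}-Y^2_{s-})\,dK^2_s.$$
Since $Y^i\ge L$ and $\int_0^T(Y^i_{s-}-L_{s-})\,dK^i_s=0$, each term is nonpositive, so this contribution carries the favourable sign. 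The monotonicity assumption (H2) controls the $y$-dependence of the generator and (H1) the $z$-dependence, so after localization Gronwall's lemma yields $\delta Y\equiv 0$; comparing the finite-variation and martingale parts of the equation then forces $\delta Z=0$ and $\delta K=0$.

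For existence I would use the penalization of the later sections in its classical form (here $L,V$ are c\`adl\`ag, so the right-jump corrections disappear). Consider the penalized generators $f_n(s,y,z)=f(s,y,z)+n(y-L_s)^-$, which remain Lipschitz in $z$, continuous and monotone in $y$ (the added term is nonincreasing in $y$), so Theorem \ref{tw1.k1} gives unique solutions $(Y^n,Z^n)$ of BSDE$(\xi,f_n+dV)$, and I set $K^n_t=n\int_0^t(Y^n_s-L_s)^-\,ds\in\mathcal{V}^{+,p}$. A comparison argument shows $Y^n$ is nondecreasing in $n$, and comparison with the dominating process $X$ of (H6) bounds $Y^n$ from above, yielding a candidate limit $Y=\lim_n Y^n$. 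The core of the proof is a family of a priori bounds, uniform in $n$, for $\|Y^n\|_{\mathcal{S}^p}$, $\|Z^n\|_{\mathcal{H}^p}$ and, most importantly, $\|K^n\|_{\mathcal{V}^{+,p}}$; the last is where (H6) enters decisively, since testing the equation against $X-Y^n$ converts the penalization mass into controllable terms.

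The hard part is the passage to the limit and the verification of the Skorokhod minimality condition. The uniform estimates give strong convergence of $Z^n$ in $\mathcal{H}^p$ and control of $K^n$ in $\mathcal{V}^{+,p}$; one checks that $Y\in\mathcal{S}^p$, that $Y\ge L$ (boundedness of $n\int_0^T(Y^n_s-L_s)^-\,ds$ forces $(Y_s-L_s)^-\to 0$), and that the limiting triple $(Y,Z,K)$ satisfies the equation. Finally, to recover $\int_0^T(Y_{s-}-L_{s-})\,dK_s=0$ I would pass to the limit in
$$\int_0^T(Y^n_s-L_s)\,dK^n_s=-n\int_0^T\big((Y^n_s-L_s)^-\big)^2\,ds\le 0,$$
combining lower semicontinuity of the increasing limit with the monotone convergence $Y^n\uparrow Y$ and the uniform bounds on $K^n$; the c\`adl\`ag regularity of $L$ and $V$ is what makes the boundary term close up correctly. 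I expect this last identification of the minimality condition in the limit to be the principal technical obstacle.
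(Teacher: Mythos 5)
First, a point of reference: the paper itself does not prove Theorem \ref{tw1.kl5} at all --- it is recalled from \cite{kl} (``Theorems \ref{tw1.kl5} and \ref{tw1.kl6} below were proved in \cite{kl}''), so your proposal has to be measured against the cited proof and against the machinery the paper imports from \cite{kl}, namely Lemma \ref{kl4.11} and Theorem \ref{kl4.12}. Your overall strategy --- penalized generators $f_n=f+n(y-L_s)^-$, monotonicity of $Y^n$ by comparison, an upper bound through the dominating process $X$ of (H6), uniform a priori estimates for $(Y^n,Z^n,K^n)$, and a limiting argument for the Skorokhod condition --- is exactly the route taken there, and your uniqueness argument (the favourable sign of $\int(\delta Y)_{s-}\,d(\delta K)_s$ coming from the minimality condition, then (H1), (H2), localization and Gronwall) is the standard and correct one.

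There is, however, one step that fails as written: ``the uniform estimates give strong convergence of $Z^n$ in $\mathcal{H}^p$.'' Uniform bounds in $\mathcal{H}^p$ give only weak compactness, never strong convergence; and strong $\mathcal{H}^p$ (or $\mathcal{H}^2$) convergence of the penalized $Z^n$ cannot be expected in general: when the limit process $K$ has left jumps ($\Delta^-K\neq 0$, which occurs when $L$ has downward jumps), one only gets convergence in $\mathcal{H}^\gamma$ for $\gamma<2$. This is visible in the paper's own penalization theorem in Section \ref{sec4}, where (\ref{eq4.6}) is asserted for $\gamma\in[1,2)$ and for $\gamma=2$ only under the extra hypothesis $\Delta^-K_t=0$, and in Lemma \ref{kl4.11}, where (\ref{eq2.1}) with $p=2$ requires $\Delta^-C=0$. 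The real missing ingredient in your proposal is the mechanism producing any strong-type convergence of $Z^n$ at all: since $f$ is only monotone (not Lipschitz) in $y$, a Cauchy/contraction argument is unavailable, and weak convergence alone does not permit passage to the limit in the nonlinear term $f(s,Y^n_s,Z^n_s)$. What is needed is precisely the monotone-limit machinery of \cite[Lemma 4.11, Theorem 4.12]{kl} (recalled in the paper as Lemma \ref{kl4.11} and Theorem \ref{kl4.12}): an It\^o argument applied to $|Y^n-Y|^2$ between suitable stopping times, exploiting $Y^n\uparrow Y$ and the bound on $K^n$, which yields $Z^n\rightarrow Z$ in measure $\lambda\otimes P$ and in $\mathcal{H}^\gamma$, $\gamma<2$, on stopped intervals; combined with the uniform integrability coming from the $\mathcal{H}^p$ bounds, this suffices to pass to the limit in the generator and in the equation. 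Once this tool is inserted, your treatment of the Skorokhod condition can also be completed by the standard two-index argument (for $m\le n$ one has $\int_0^T(Y^m_{s-}-L_{s-})\,dK^n_s\le\int_0^T(Y^n_{s-}-L_{s-})\,dK^n_s\le 0$; let $n\rightarrow\infty$ with $m$ fixed using weak convergence of $dK^n$, then $m\rightarrow\infty$ by monotone convergence), and the remainder of your outline goes through.
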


\begin{theorem}
\label{tw1.kl6} Let $p=1$ and \textnormal{(H1)--(H5), (H6*), (Z)}
be satisfied. Then there exists a unique solution  $(Y,Z,K)$ of
\textnormal{RBSDE($\xi$,$f$,$L$)}. Moreover, $Y$ is of class {\rm
(D)},
$(Y,Z,K)\in\mathcal{S}^q\otimes\mathcal{H}^q\otimes\mathcal{V}^{1,+}$
for $q\in(0,1)$ and $E\int^T_0|f(s,Y_s,Z_s)|\,ds<\infty$.
\end{theorem}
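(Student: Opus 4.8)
The plan is to establish uniqueness by an Itô--Tanaka estimate that exploits the Skorokhod minimality condition, and existence by a penalization scheme reduced to the $L^1$ BSDE theory of Theorem \ref{tw1.k2}, with the $L^1$ a priori bounds supplied by (H6*) and (Z).

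For uniqueness, let $(Y^1,Z^1,K^1)$ and $(Y^2,Z^2,K^2)$ be two solutions and set $\delta Y=Y^1-Y^2$, $\delta Z=Z^1-Z^2$, $\delta K=K^1-K^2$. I would apply the Itô--Tanaka formula (in the regulated version recalled in the Appendix) to $|\delta Y|$ and localize with stopping times reducing the local martingale $\int\sgn(\delta Y_{s-})\,\delta Z_s\,dB_s$, which is necessary since only $\mathcal{H}^q$ integrability with $q<1$ is available. The decisive point is that $\int_t^T\sgn(\delta Y_{s-})\,d(\delta K)_s\le 0$: on $\{\delta Y_{s-}>0\}$ one has $Y^1_{s-}>Y^2_{s-}\ge L_{s-}$, so $Y^1_{s-}>L_{s-}$ and the minimality condition forces $dK^1=0$ there, leaving $-dK^2\le 0$; the case $\{\delta Y_{s-}<0\}$ is symmetric. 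Combining this with the monotonicity (H2) and the Lipschitz bound (H1), taking expectations, removing the localization and applying Gronwall's lemma yields $\delta Y\equiv 0$, whence $\delta Z=0$ in $\mathcal{H}^q$ and $\delta K=0$.

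For existence I would penalize. For $n\in\mathbb{N}$ set $f_n(s,y,z)=f(s,y,z)+n(y-L_s)^-$; since the added term is nonincreasing in $y$, independent of $z$, and $L$ is c\`adl\`ag (hence pathwise bounded), the data $(\xi,f_n+dV)$ still satisfy (H1)--(H5) and (Z), so Theorem \ref{tw1.k2} provides a unique solution $(Y^n,Z^n)$ of BSDE$(\xi,f_n+dV)$ with $Y^n\in\mathcal{S}^q$. Writing $K^n_t=n\int_0^t(Y^n_s-L_s)^-\,ds$, the $L^1$ comparison principle gives $Y^n\le Y^{n+1}$, so $Y^n\uparrow Y$ pointwise. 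The core estimates are then: an upper bound for $Y^n$ by comparison with the dominating process $X$ from (H6*), ensuring $\{Y^n\}$ is uniformly of class (D); a uniform total--variation bound $\sup_n EK^n_T<\infty$ extracted from this comparison; and, using (Z), a uniform estimate $\sup_n E(\int_0^T|Z^n_s|^2\,ds)^{q/2}<\infty$ for $q\in(0,1)$.

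Finally I would pass to the limit: the uniform bounds give convergence of $Z^n$ to some $Z\in\mathcal{H}^q$ and of $K^n$ to an increasing $K\in\mathcal{V}^{1,+}$, while $Y^n\uparrow Y$ with $Y$ of class (D); passing to the limit in the penalized equation identifies $(Y,Z,K)$ as satisfying (a), (c), (d) of the definition, and $Y\ge L$ follows because $n\int_0^T(Y^n_s-L_s)^-\,ds$ remains bounded. The main obstacle is verifying the minimality condition $\int_0^T(Y_{s-}-L_{s-})\,dK_s=0$ in the limit: since $\int_0^T(Y^n_{s-}-L_{s-})\,dK^n_s=-n\int_0^T((Y^n_s-L_s)^-)^2\,ds\le 0$, it suffices to show $\int_0^T(Y^n_{s-}-L_{s-})\,dK^n_s\to\int_0^T(Y_{s-}-L_{s-})\,dK_s$, whereupon the limit integral is both $\le 0$ and $\ge 0$, hence $0$. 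Securing this limit passage, which requires simultaneous control of $Y^n_{s-}$ and of $dK^n$, together with the uniform $L^1$ bound on $K^n$ in the absence of the $L^2$ energy estimates, is the technically hardest part, and it is precisely there that (H6*) and (Z) are used.
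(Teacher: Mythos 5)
Your proposal cannot be checked against a proof in the paper itself: Theorem \ref{tw1.kl6} is quoted from \cite{kl} and the paper gives no argument for it. Judged on its own merits, your existence strategy (penalization, comparison with the supersolution built from (H6*), monotone passage to the limit) is the same strategy as in \cite{kl}, but your uniqueness argument has a genuine gap. After applying Tanaka's formula to $|\delta Y|$ and using (H2), the generator difference leaves, via (H1), the term $\lambda\int_t^T|\delta Z_s|\,ds$, and in the $p=1$ setting this term kills the Gronwall argument: the solutions only satisfy $\delta Z\in\mathcal{H}^q$ for $q<1$, so $E\int_0^T|\delta Z_s|\,ds$ need not even be finite, and even when it is, Gronwall yields only $E|\delta Y_t|\le C\,E\int_t^T|\delta Z_s|\,ds$, which is not zero. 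At the level of first powers there is no quadratic-variation term into which $|\delta Z|$ can be absorbed by Young's inequality (the local time from Tanaka does not serve this purpose), and the alternative of linearizing in $z$ and performing a Girsanov change of measure also fails, because the class (D) property of $\delta Y$ is not preserved under the new measure without extra integrability. This is precisely why hypothesis (Z) appears in the statement, yet your uniqueness sketch never invokes it. The correct route — the one used in \cite{kl} and reproduced in this paper as Remark \ref{uw2.0} — is to use (Z) together with Doob's maximal inequality to bootstrap: bound $(Y^1_t-Y^2_t)^+$ by a conditional expectation of $2\gamma\int_0^T(g_s+|Y^1_s|+|Y^2_s|+|Z^1_s|+|Z^2_s|)^\alpha\,ds$, conclude $(Y^1-Y^2)^+\in\mathcal{S}^p$ for some $p>1$ (possible since $\alpha<q<1$), and only then run the $p$-th power comparison of Proposition \ref{prop2.2}, where the term $\lambda((Y^1-Y^2)^+)^{p-1}|\delta Z|$ \emph{can} be absorbed into $((Y^1-Y^2)^+)^{p-2}\mathbf{1}_{\{Y^1>Y^2\}}|\delta Z|^2$.

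Two further points on the existence half. First, your claim that $(\xi,f_n+dV)$ satisfies (H3) "since $L$ is c\`adl\`ag, hence pathwise bounded" is not a valid justification: (H3) requires $\int_0^T|f_n(r,0,0)|\,dr=\int_0^T|f(r,0,0)+nL_r^+|\,dr\in L^1$, i.e.\ $E\int_0^T L_r^+\,dr<\infty$, and pathwise boundedness gives only a.s.\ finiteness; you must use (H6*) ($L\le X$ with $X$ of class (D), so $\int_0^T E|X_r|\,dr\le T\|X\|_D<\infty$). Second, the two steps you yourself flag as hardest — convergence of $(Z^n,K^n)$ and the limit passage in the Skorokhod condition — are left open; they are the substance of the proof in \cite{kl}, which handles them with an $L^1$ monotone-limit theorem of Peng type (the analogues of Lemma \ref{kl4.11} and Theorem \ref{kl4.12} here) and, for the minimality condition, with the identification of $Y$ as a Snell envelope rather than a direct passage to the limit in $\int_0^T(Y^n_{s-}-L_{s-})\,dK^n_s$; the latter is delicate because $K^n$ is not monotone in $n$ and only converges weakly.
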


For  convenience of the reader we now formulate  counterparts of
\cite[Lemma 4.11]{kl} and \cite[Theorem 4.12]{kl} for regulated
processes.

\begin{lemma}\label{kl4.11}
Assume that \textnormal{(H1)--(H4)} hold. Let
$L^n,L\in\mathcal{V}$, $g_n,g,\bar{f}$ be progressively measurable
processes such that $\int_0^T|g_n(s)|\,ds,\, \int_0^T|g(s)|\,ds,\,
\int_0^T|\bar{f}(s)|\,ds\in L^1$, and let $(Y^n,Z^n),
(Y,Z)\in\mathcal{S}\otimes\mathcal{H}$ be such that $t\mapsto
f(t,Y^n_t,Z^n_t),t\mapsto f(t,Y_t,Z_t)\in L^1(0,T)$ and
\[
Y^n_t=Y^n_0-\int^t_0 g_n(s)\,ds-\int^t_0 f(s,Y^n_s,Z^n_s)\,ds
-\int^t_0\,dL^n_s+\int^t_0 Z^n_s\,dB_s,\quad t\in[0,T],
\]
\[
Y_t=Y_0-\int^t_0 g(s)\,ds-\int^t_0 \bar{f}(s)\,ds
-\int^t_0\,dL_s+\int^t_0 Z_s\,dB_s,\quad t\in[0,T].
\]
If
\begin{enumerate}[{\rm(a)}]
\item $E\sup_{n\ge 0}(L^n)^+_T+E\int^T_0|f(s,0,0)|\,ds<\infty$,
\item $\liminf_{n\rightarrow\infty}
(\int^{\tau}_{\sigma}(Y_s-Y^n_s)\,dL^{n,*}_s+\sum_{\sigma\le
s<\tau}(Y_s-Y^n_s)\Delta^+L^n_s)\ge 0$ for all
$\sigma,\tau\in\Gamma$ such that $\sigma\le\tau$,
\item there exists $C\in\mathcal{V}^{1,+}$ such that
$|\Delta^-(Y_t-Y^n_t)|\le|\Delta^-C_t|$, $t\in[0,T]$,
\item there exist processes $\underline{y},
\overline{y}\in\mathcal{V}^{1,+}+\mathcal{M}_{loc}$ of class
\mbox{\rm(D)} such that
\[
\overline{y}_t\le Y_t\le\underline{y}_t,\quad t\in[0,T], \quad
E\int^T_0 f^+(s,\overline{y}_s,0)\,ds+E\int^T_0
f^-(s,\underline{y}_s,0)\,ds<\infty,
\]
\item there exists $h\in L^1(\mathcal{F})$ such
that $|g_n(s)|\le h(s)$ for a.e. $s\in[0,T]$,
\item $Y^n_t\rightarrow Y_t$, $t\in[0,T]$,
\end{enumerate}
then
\[
Z^n\rightarrow Z,\quad\lambda\otimes P\mbox{-a.e.},
\quad\int^{T}_0
|f(s,Y^n_s,Z^n_s)-f(s,Y_s,Z_s)|\,ds\rightarrow0\quad
in\,\,\mbox{probability}\quad P
\]
and there exists a sequence $\{\tau_k\}\subset\Gamma$ such that
for all  $k\in\mathbb{N}$ and $p\in(0,2)$,
\begin{equation}
\label{eq2.1} E\int^{\tau_k}_0|Z^n_s-Z_s|^p\,ds\rightarrow0.
\end{equation}
If $\Delta^-C_t=0$, $t\in[0,T]$, then \mbox{\rm(\ref{eq2.1})} also
holds for $p=2$. If additionally  $g_n\rightarrow g$ weakly in
$L^1([0,T]\times\Omega)$ and $L^n_{\tau}\rightarrow L_{\tau}$
weakly in $L^1$ for every $\tau\in\Gamma$, then
$\bar{f}(s)=f(s,Y_s,Z_s)$ for a.e. $s\in[0,T]$.
\end{lemma}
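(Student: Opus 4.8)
The plan is to drive the whole proof from a single energy estimate for the difference $U^n:=Y^n-Y$. From the two equations $U^n$ is a semimartingale whose continuous martingale part is $\int_0^\cdot(Z^n_s-Z_s)\,dB_s$, whose absolutely continuous drift is $-(g_n-g)-(f(\cdot,Y^n,Z^n)-\bar f)$, and whose remaining finite-variation part is $-(L^n-L)$; since the martingale part is continuous, $\Delta^\pm U^n=-\Delta^\pm(L^n-L)$. As the data are only integrable, I would first localize: choose a stationary sequence $\{\tau_k\}\subset\Gamma$ reducing the local-martingale components of $\underline y,\overline y$ from (d) to uniformly integrable martingales and bounding the total variations of $L^n,L,C$ and the integrals $\int_0^\cdot h\,ds,\ \int_0^\cdot|f(s,0,0)|\,ds$ on $[0,\tau_k]$; stationarity is available because every regulated trajectory is bounded. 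On $[0,\tau_k]$ assumptions (a) and (d), together with (H2) and (H4), provide a domination of $Y^n$ and of $f(\cdot,Y^n,0)$ by fixed integrable quantities, uniformly in $n$, the latter by squeezing $f(s,Y^n_s,0)$ between its values at the bounded extremes of $Y^n_s$ via monotonicity.

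Next I would apply the It\^o formula for regulated processes (see Appendix) to $(U^n)^2$ on $[0,\tau_k]$ and solve for the energy, obtaining after taking expectations and dropping the martingale term
\begin{align*}
E\int_0^{\tau_k}|Z^n_s-Z_s|^2\,ds
&=E(U^n_{\tau_k})^2-E(U^n_0)^2+2E\int_0^{\tau_k}U^n_{s-}(g_n-g)\,ds\\
&\quad+2E\int_0^{\tau_k}U^n_{s-}\big(f(s,Y^n_s,Z^n_s)-\bar f_s\big)\,ds\\
&\quad+2E\int_0^{\tau_k}U^n_{s-}\,d(L^n-L)_s-E\!\!\sum_{s\le\tau_k}(\Delta U^n_s)^2 .
\end{align*}
The first two boundary terms tend to $0$ by (f) and dominated convergence. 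The $g$-term vanishes because $U^n_{s-}\to0$ while $g_n-g$ is dominated by $h+|g|$ from (e). In the $f$-term I would write $f(s,Y^n_s,Z^n_s)-\bar f_s=\big(f(s,Y^n_s,Z^n_s)-f(s,Y^n_s,Z_s)\big)+\big(f(s,Y^n_s,Z_s)-\bar f_s\big)$: the first bracket is bounded by $\lambda|Z^n_s-Z_s|$ through (H1) and is absorbed into the left-hand side by Young's inequality, while the second is killed by $U^n_{s-}\to0$ and the uniform domination above. Crucially, this last step uses only $U^n_{s-}\to0$ and does not presuppose $\bar f=f(\cdot,Y,Z)$, which dissolves the apparent circularity.

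The reflection term is the heart of the argument. Splitting $d(L^n-L)$ and using $\Delta^\pm U^n=-\Delta^\pm(L^n-L)$, its $dL$-part tends to $0$ since $U^n\to0$, while its $dL^n$-part reorganizes, up to a jump correction, into $-2E(\int_0^{\tau_k}(Y_s-Y^n_s)\,dL^{n,*}_s+\sum_{s\le\tau_k}(Y_s-Y^n_s)\Delta^+L^n_s)$, whose $\limsup$ is $\le0$ by (b) combined with Fatou (the domination furnished by (a) and the localization). The leftover jump corrections are residual squared left-jumps such as $\sum_{s\le\tau_k}\Delta^-U^n_s\,\Delta^-(L^n-L)_s$, which by (c) are bounded by $E\sum_{s\le\tau_k}|\Delta^-C_s|^2$ but need not vanish; hence the square-function computation yields $\limsup_nE\int_0^{\tau_k}|Z^n_s-Z_s|^2\,ds\le\rho_k$, with $\rho_k$ collecting these residuals. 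When $\Delta^-C\equiv0$ we have $\rho_k=0$, giving $L^2$-convergence on $[0,\tau_k]$ and (\ref{eq2.1}) for $p=2$. For general $p\in(0,2)$ I would rerun the same scheme with the regularized test function $\phi_p(x)=(\epsilon+|x|^2)^{p/2}$ in place of $x^2$, as in the $L^1$-theory of BSDEs, which controls $E\int_0^{\tau_k}|Z^n_s-Z_s|^p\,ds$ directly and lets the left-jump residuals be absorbed; this gives (\ref{eq2.1}) for every $p<2$ and, along a subsequence, $Z^n\to Z$ $\lambda\otimes P$-a.e. Then (H1), (H4) and (f) give $f(\cdot,Y^n,Z^n)\to f(\cdot,Y,Z)$ a.e., and the uniform domination yields $\int_0^T|f(s,Y^n_s,Z^n_s)-f(s,Y_s,Z_s)|\,ds\to0$ in probability.

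Finally, under the extra weak-convergence hypotheses I would pass to the limit in the equation for $Y^n$: using $Y^n\to Y$, the already-proven convergence of $\int_0^\cdot f(s,Y^n_s,Z^n_s)\,ds$ and of $\int_0^\cdot Z^n\,dB$, and the weak $L^1$-convergence of $g_n$ and of $L^n_\tau$ to pass to the limit in $\int_0^\cdot g_n\,ds$ and $\int_0^\cdot dL^n$, one recovers an equation for $Y$ whose absolutely continuous drift must match that of the given equation; hence $\int_0^t\bar f_s\,ds=\int_0^t f(s,Y_s,Z_s)\,ds$ for all $t$, i.e. $\bar f=f(\cdot,Y,Z)$ a.e. I expect the reflection term to be the main obstacle: turning the pathwise one-sided inequality (b) into a $\limsup\le0$ in expectation (which dictates the Fatou and uniform-integrability bookkeeping underpinned by the localization and (a)), while simultaneously isolating the residual left-jumps so as to control them by (c) — their non-vanishing being exactly what restricts the sharp $L^2$-conclusion to the case $\Delta^-C\equiv0$.
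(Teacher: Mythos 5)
Your overall skeleton is the same as the paper's: the paper proves this lemma by rerunning the proof of \cite[Lemma 4.11]{kl} with It\^o's formula for regulated processes, and your energy display is precisely the modified inequality recorded there (the analogue of (4.16) in \cite{kl}), including the right-jump sum that assumption (b) handles and the left-jump squares $E\sum|\Delta^-(L^n-L)_s|^2\le E\sum|\Delta^-C_s|^2$ that assumption (c) controls; the identification of $\bar f$ via weak limits is also the paper's route.

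The genuine gap is your mechanism for $p\in(0,2)$. Replacing $x^2$ by $\phi_\epsilon(x)=(\epsilon+|x|^2)^{p/2}$ does not absorb the left-jump residuals. The residual enters when you replace $\phi_\epsilon'(U^n_{s-})$ by $\phi_\epsilon'(U^n_s)$ in the integral against $d(L^n-L)^*$ --- a replacement you cannot avoid, because (b) is stated at $s$, not at $s-$ --- and this correction is of order $\|\phi_\epsilon''\|_\infty\,E\sum|\Delta^-(L^n-L)_s|^2\sim\epsilon^{(p-2)/2}\,E\sum|\Delta^-C_s|^2$. When you then pass from the weighted energy $E\int(\epsilon+|U^n_s|^2)^{(p-2)/2}|Z^n_s-Z_s|^2\,ds$ to $E\int|Z^n_s-Z_s|^p\,ds$ by H\"older, the conjugate factor $(\epsilon+|U^n_s|^2)^{p/2}$ contributes $\epsilon^{p(2-p)/4}$ (since $U^n\to0$), and the powers of $\epsilon$ cancel exactly: you end up with $\limsup_n E\int_0^{\tau_k}|Z^n_s-Z_s|^p\,ds\lesssim(E\sum|\Delta^-C_s|^2)^{p/2}$, i.e.\ the same non-vanishing obstruction as for $p=2$, independently of the test function. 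What actually yields $p<2$ --- this is Peng's monotone-limit argument \cite{peng}, which is the engine of \cite[Lemma 4.11]{kl} --- is interpolation rather than regularization: keep from the quadratic estimate only the uniform bound $\sup_n E\int_0^{\tau_k}|Z^n_s-Z_s|^2\,ds<\infty$; given $\delta>0$, cover the finitely many left jumps of $C$ of size $\ge\delta$ on $[0,\tau_k]$ by stopping-time intervals $G$ of total length $\le\eta$; on the complement of $G$ the energy estimate applies with residual $\le\delta\,EC_{\tau_k}$, while on $G$ H\"older gives $E\int_G|Z^n_s-Z_s|^p\,ds\le(E\int_0^{\tau_k}|Z^n_s-Z_s|^2\,ds)^{p/2}(E|G|)^{1-p/2}\le C\eta^{1-p/2}$; letting $n\to\infty$ and then $\delta,\eta\to0$ gives (\ref{eq2.1}) for $p<2$, and explains why $p=2$ truly requires $\Delta^-C\equiv0$. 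Since your a.e.\ convergence of $Z^n$ and the convergence in probability of the generator integrals are downstream of this step, the gap is central. Two smaller soft spots of the same kind: condition (d) dominates $Y$, not $Y^n$, so the uniform-in-$n$ domination of $Y^n$ and of $f(\cdot,Y^n_s,0)$ that you invoke is not free; and a single localizing sequence bounding the variations of all $L^n$ simultaneously does not follow from (a) alone, which controls only $E\sup_n(L^n)^+_T$.
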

\begin{proof}
It is enough to repeat step by step the the proof  \cite[Lemma
4.11]{kl} and use It\^o's formula for regulated processes (see
Appendix). The only difference is that inequality (4.16) in
\cite{kl} in our case takes the form
\begin{align*}
E\int^{\tau}_{\sigma}|Z_s-Z^n_s|^2\,ds
&\le E|Y_{\tau}-Y^n_{\tau}|^2+2E\int^{\tau}_{\sigma}
|Y_s-Y^n_s||f(s,Y_s,Z_s)-f(s,Y^n_s,Z^n_s)|\,ds\\
&\ +2\int^{\tau}_{\sigma}|Y_s-Y^n_s||g(s)-g_n(s)|\,ds
+2E\int^{\tau}_{\sigma}(Y_s-Y^n_s)\,d(L_s-L^n_s)^*\\
&\ +2E\sum_{\sigma\le s<\tau}(Y_s-Y^n_s)\Delta^+(L_s-L^n_s)
+E\sum_{\sigma\le s<\tau}|\Delta^-(L_s-L^n_s)|^2.
\end{align*}
\end{proof}

\begin{remark}
\label{uw1.kl1} In Lemma \ref{kl4.11} assumption (e) may be
replaced by the following  one:  there exists a stationary
sequence $\{\tau_k\}\subset \Gamma$ such that $\sup_{n\ge1}
E\int^{\tau_k}_0|g_n(s)|^2\,ds<\infty$ and the assertion of the
lemma holds. This follows from the fact that assumption (e) is
used in the proof of \cite[Lemma 4.11]{kl} only to show that
\cite[(4.15)]{kl} holds true, i.e. that
$\int^T_0|g(s)-g_n(s)||Y_s-Y^n_s|\,ds\rightarrow0$. But under the
new condition this follows from the inequality
\begin{align*}\int^T_0|g(s)-g_n(s)||Y_s-Y^n_s|\,ds
\le\Big(E\int^T_0|g(s)-g_n(s)|^2\,ds\Big)^{1/2}
\Big(E\int^T_0|Y_s-Y^n_s|^2\,ds\Big)^{1/2}.
\end{align*}
\end{remark}

\begin{theorem}\label{kl4.12}
Assume that  \textnormal{(H1)--(H4)} hold,
$(Y^n,Z^n)\in\mathcal{S}\otimes\mathcal{H}$, $A^n\in \mathcal{V},
K^n\in\mathcal{V}^+$, $t\mapsto f(t,Y^n_t,Z^n_t)\in L^1(0,T)$ and
\[
Y^n_t=Y^n_0-\int^t_0 g_n(s)\,ds-\int^t_0 f(s,Y^n_s,Z^n_s)\,ds
-\int^t_0\,dK^n_s+\int^t_0\,dA^n_s+\int^t_0 Z^n_s\,dB_s
\]
for $t\in[0,T]$. Moreover, assume that
\begin{enumerate}[{\rm (a)}]
\item $dA^n\le dA^{n+1}$, $n\in\mathbb{N}$,
$\sup_{n\ge 0}E|A^n|_T<\infty$,

\item $\liminf_{n\rightarrow\infty}
\Big(\int^{\tau}_{\sigma}(Y_s-Y^n_s)\,d(K^n_s-A^n_s)^*
+\sum_{\sigma\le s<\tau}(Y_s-Y^n_s)\Delta^+(K^n_s-A^n_s)\Big)\ge
0$ for any $\sigma,\tau\in\Gamma$ such that $\sigma\le\tau$,

\item there exists process $C\in\mathcal{V}^{1,+}$ such that
$\Delta^-K^n_t\le\Delta^-C_t$, $t\in[0,T]$,

\item there exist processes $\underline{y},\overline{y}
\in\mathcal{V}^{1,+}+\mathcal{M}_{loc}$ of class (D) such that
\[
E\int^T_0 f^+(s,\overline{y}_s,0)\,ds+E\int^T_0
f^-(s,\underline{y}_s,0)\,ds<\infty,\quad\overline{y}_t
\le Y^n_t\le\underline{y}_t,\quad t\in[0,T],
\]
\item $E\int^T_0|f(s,0,0)|\,ds<\infty$ and there
exists a progressively measurable process $h\in
L^1([0,T]\times\Omega)$ such that $|g_n(s)|\le h(s)$ for a.e.
$s\in[0,T]$,
\item $Y^n_t\nearrow Y_t$, $t\in[0,T]$.
\end{enumerate}
Then $Y\in\mathcal{S}$ and there exist $K\in\mathcal{V}^+$,
$A\in\mathcal{V}^{1}$, $Z\in\mathcal{H}$ and progressively
measurable process $g\in L^1([0,T]\times \Omega)$ such that
\[
Y_t=Y_0-\int^t_0 g(s)\,ds-\int^t_0 f(s,Y_s,Z_s)\,ds
-\int^t_0\,dK_s+\int^t_0\,dA_s+\int^t_0 Z_s\,dB_s\quad t\in[0,T]
\]
and
\[
Z^n\rightarrow Z,\quad\lambda\otimes P\mbox{-a.e.},
\quad\int^{T}_0 |f(s,Y^n_s,Z^n_s)-f(s,Y_s,Z_s)|\,ds\rightarrow
0\quad in\,\,\mbox{probability}\quad P.
\]
Moreover, there exists a stationary sequence
$\{\tau_k\}\subset\Gamma$ such that for every $p\in(0,2)$,
\begin{equation}
\label{eq2.2} E\int^{\tau_k}_0|Z^n_s-Z_s|^p\,ds\rightarrow 0.
\end{equation}
If $|\Delta^-C_t|+|\Delta^-K_t|=0$, $t\in[0,T]$, then
\mbox{\rm(\ref{eq2.2})} also holds for $p=2$.
\end{theorem}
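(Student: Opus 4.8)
Theorem \ref{kl4.12} asks me to pass to the limit in a sequence of semimartingale-type decompositions where $Y^n \nearrow Y$, and to identify the limit as a solution of the analogous equation with some new increasing process $K$, finite-variation process $A$, and martingale part $Z$. The key structural observation is that Theorem \ref{kl4.12} is the "convergence theorem" whose engine is Lemma \ref{kl4.11}: once I can produce candidate limit processes $K, A, Z, g$ and verify the six hypotheses of Lemma \ref{kl4.11} hold for the pair $(Y^n,Z^n) \to (Y,Z)$, the convergence conclusions ($Z^n \to Z$ a.e., the $f$-term converging in probability, and the localized $L^p$ convergence of $Z^n$) come for free from that lemma. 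So my plan has two distinct phases: first construct the limit objects and show $Y \in \mathcal{S}$, then feed everything into Lemma \ref{kl4.11}.

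Let me describe the construction phase. First I would set $A_t = \lim_n A^n_t$; this limit exists because hypothesis (a) gives $dA^n \le dA^{n+1}$ (monotone increase in the Stieltjes sense) together with the uniform bound $\sup_n E|A^n|_T < \infty$, so by monotone convergence $A \in \mathcal{V}^1$ and $A^n \to A$ appropriately. Next, from the sandwiching hypothesis (d), the monotone pointwise convergence $Y^n \nearrow Y$ in (f), and the domination of the generator via hypotheses (d) and (e), I would extract enough uniform integrability to control $Y$ and show it lies in $\mathcal{S}$ (regulated, progressively measurable). The subtle point is producing the martingale integrand $Z$ and the drift-type process $g$ in the limit: I would rearrange the defining equation for $Y^n$ to isolate the increasing part, define $K$ as the limit of an appropriate compensated expression, and identify $Z$ as the weak (or a.e.) limit of $Z^n$. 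Because the statement explicitly allows the limit $g \in L^1([0,T]\times\Omega)$ to be a genuinely new process (not necessarily $g_n$'s pointwise limit), I have freedom here: $g$ and $K$ absorb whatever the sequences $g_n$ and $K^n$ converge to weakly.

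The heart of the argument is verifying the six hypotheses of Lemma \ref{kl4.11} for the limiting data, with $L^n := K^n - A^n$ playing the role of the lemma's driver $L^n$ and $L := K - A$ its limit. Hypothesis (b) of the theorem is exactly hypothesis (b) of the lemma (the $\liminf$ minimality-type condition), so it transfers directly. Hypothesis (a) of the lemma follows from combining the $E|A^n|_T$ bound in (a) of the theorem with the jump control in (c); the uniform bound $E\sup_n (L^n)^+_T < \infty$ needs the increasing structure and the domination. The jump-domination hypothesis (c) of the lemma on $|\Delta^-(Y - Y^n)|$ I would derive from hypothesis (c) of the theorem bounding $\Delta^- K^n$ by $\Delta^- C$, since the left-jumps of $Y^n$ are governed by the left-jumps of $K^n$ (the Brownian and $ds$ integrals are continuous, and $A^n$'s jumps are controlled by monotonicity). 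Hypotheses (d), (e), (f) of the lemma match (d), (e), (f) of the theorem almost verbatim. Once these are checked, Lemma \ref{kl4.11} delivers $Z^n \to Z$, the convergence of the generator integrals, and the stationary-sequence $L^p$ estimate \eqref{eq2.1}, which is precisely \eqref{eq2.2}; the final clause about $p=2$ under vanishing left-jumps transfers because $\Delta^- C = 0$ and $\Delta^- K = 0$ together kill the dominating jump process.

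The main obstacle I anticipate is the construction of $K$ and $Z$ in the limit and the verification that the left-jump hypothesis (c) of Lemma \ref{kl4.11} genuinely holds. The difficulty is that $Y^n \nearrow Y$ is only pointwise-in-$t$ monotone convergence, which does not by itself control jumps or oscillations of the limit; I must use the finite-variation and martingale structure to show $Y$ is regulated and that its left-jumps are dominated. Concretely, the left-jump $\Delta^-(Y_t - Y^n_t)$ must be bounded by $|\Delta^- C_t|$, and since $A^n$ increases to $A$ the jumps $\Delta^- A^n$ could in principle accumulate, so I would need to argue carefully that the monotone limit of the increasing processes $K^n$ inherits the jump bound from $C$ and that $A$ contributes only controlled left-jumps. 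This is the step where the hypotheses (a) and (c) must be used in tandem, and where a naive passage to the limit would fail; everything downstream is then a citation of Lemma \ref{kl4.11}.
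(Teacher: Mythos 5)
Your skeleton is indeed the paper's: the authors give no separate argument for this theorem beyond the remark that it ``follows directly from Lemma \ref{kl4.11}'', applied with $L^n:=K^n-A^n$, $L:=K-A$, which is exactly your reduction. The genuine gap is in your verification of hypothesis (a) of Lemma \ref{kl4.11}. You claim that $E\sup_{n}(L^n)^+_T<\infty$ ``follows from combining the $E|A^n|_T$ bound in (a) of the theorem with the jump control in (c)''. It cannot: with $L^n=K^n-A^n$ this is essentially a uniform bound on $EK^n_T$, and nothing in (a) or (c) bounds the total mass of $K^n$ --- (c) controls only the size of individual left jumps of $K^n$. The bound must be extracted from the equation itself: write $K^n_{\tau}=Y^n_0-Y^n_{\tau}-\int_0^{\tau}g_n(s)\,ds-\int_0^{\tau}f(s,Y^n_s,Z^n_s)\,ds+A^n_{\tau}+\int_0^{\tau}Z^n_s\,dB_s$ along a localizing sequence, control $Y^n_0-Y^n_{\tau}$ by the class (D) norms of $\overline{y},\underline{y}$ from (d), the $g_n$ term by (e), the $A^n$ term by (a), the part $f(s,Y^n_s,0)$ by (H2) together with the integrability required in (d), and the remaining term $\lambda\int_0^{\tau}|Z^n_s|\,ds$ coming from (H1) by an It\^o-type estimate on $Z^n$ with an absorption argument --- i.e.\ the a priori estimates of \cite[Lemma 4.2]{kl}, which is precisely what the present paper invokes (see (\ref{Tw.3.1})--(\ref{Tw.3.2})) whenever it wants to apply this theorem. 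This estimate is the one substantive analytic step in deducing the theorem from the lemma; it also underwrites the uniform integrability needed to extract the weak-$L^1$ limits of $g_n$ and $L^n_{\tau}$ that you must feed into the last clause of Lemma \ref{kl4.11} to identify $\bar{f}(s)=f(s,Y_s,Z_s)$. Omitting it leaves both the hypothesis verification and the identification of the limit equation unsupported.

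A secondary inaccuracy concerns the $p=2$ endpoint: you say that $\Delta^-C=0$ and $\Delta^-K=0$ ``kill the dominating jump process''. They do not, because the process dominating $|\Delta^-(Y_t-Y^n_t)|$ necessarily also carries the left jumps of $A-A^n$ (note that $\Delta^-(Y-Y^n)_t=-\Delta^-(K-K^n)_t+\Delta^-(A-A^n)_t$), and these are not assumed to vanish. To get $p=2$ one must re-enter the proof of Lemma \ref{kl4.11}, where the relevant quantity is $E\sum_{\sigma\le s<\tau}|\Delta^-(L_s-L^n_s)|^2$, and show that it tends to $0$: here $dA^n\le dA^{n+1}$ gives $\Delta^-A^n_t$ nondecreasing in $n$, and the squared jumps are dominated by the summable jumps of $A-A^1$, so monotone/dominated convergence applies. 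The conclusion is therefore salvageable, but not by the direct citation you describe.
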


\begin{remark}
Since the proof of the above theorem follows directly  from Lemma
\ref{kl4.11},  it suffices to assume in (e) that there exists a
stationary sequence $\{\tau_k\}$ such that $\sup_{n\ge
1}E\int^{\tau_k}_0|g_n(s)|^2\,ds<\infty$ (see Remark
\ref{uw1.kl1}).
\end{remark}

\nsubsection{Reflected BSDEs }

 In what follows we assume that the barrier $L$  is an
$\mathbb{F}$-adapted optional process and that $\xi\geq L_T$.

\begin{definition}
\label{def1} We say that  a triple $(Y, Z,K)$   of
$\mathbb{F}$-progressively measurable  processes is a solution of
the reflected backward stochastic differential equation with
right-hand side $f+dV$, terminal value $\xi$ and lower barrier $L$
(\textnormal{RBSDE($\xi,f+dV,L$)}) if
\begin{enumerate}[(a)]
\item $(Y,Z)\in\mathcal{S}^p\otimes \mathcal{H}$ for some $p>1$
or $Y$ is of class {\rm (D)} and $Z\in \mathcal{H}^q$ for $q\in
(0,1)$,
\item $K\in \mathcal{V}^+$,  $L_t\le Y_t$, $t\in[0,T]$, and
\[
\int^T_0(Y_{s-}-\limsup_{u\uparrow s}L_u)\,dK^*_s
+\sum_{s<T}(Y_s-L_s)\Delta^+K_s=0,
\]
\item $\int^T_0|f(s,Y_s,Z_s)|\,ds<\infty$,
\item $Y_t=\xi+\int^T_t f(s,Y_s,Z_s)\,ds
+\int_t^T\,dK_s+\int_t^T\,dV_s-\int^T_t Z_s\,dB_s,\quad t\in
[0,T]$.
\end{enumerate}
\end{definition}

\begin{remark}\label{rem1}
Assume that  $(Y,Z,K)$   is a solution of RBSDE($\xi,f+dV,L$). Let
$a\in\R$, and let
\[
\tilde
\xi=e^{aT}\xi,\quad \tilde L_t=e^{at}L_t,\quad \tilde V_t
=\int_0^te^{as}dV^*_s+\sum_{s<t}e^{as}\Delta V^+_s,
\]
\[
\quad \tilde f(t,y,z)=e^{at}f(t,e^{-at}y,e^{-at}z)-ay
\]
and
\[
\tilde Y_t=e^{at}Y_t\quad, \tilde Z_t=e^{at}Z_t \quad \tilde
K_t=\int_0^te^{as}dK^\star_s +\sum_{s<t}e^{as}\Delta K^+_s.
\]
Then $(\tilde Y,\tilde Z, \tilde K)$ solves  RBSDE$(\tilde
\xi,\tilde f+d\tilde V,\tilde L)$. Therefore choosing $a$
appropriately we may assume that (H2) is satisfied with arbitrary
but fixed $\mu\in\R$.
\end{remark}

Let $\{\sigma^i_k\}$ be a finite sequence of stopping  times and
let $(Y^i,Z^i,A^i)$ be a solution of the following BSDE
\begin{align}
\label{eq1} Y^i_t&=\xi^i+\int^T_t
f^i(s,Y^i_s,Z^i_s)\,ds+\int^T_t\,dV^i_s
-\int^T_t Z^i_s\,dB_s\nonumber\\
&\quad+\sum_{t\le\sigma^i_k<T}(Y^i_{\sigma^i_k+}
+\Delta^+V^i_{\sigma^i_k}-L_{\sigma^i_k})^-,\quad t\in[0,T],\quad
i=1,2.
\end{align}

\begin{proposition}
\label{prop2.2} Assume that  $f^1$ satisfies  \mbox{\rm(H1)} and
\mbox{\rm(H2)}, $\xi^1\le\xi^2$, $f^1(t,Y^2_t,Z^2_t)\le
f^2(t,Y^2_t,Z^2_t)$, $dV^1_t\le dV^2_t$, $U^1_t\le U^2_t$,
$t\in[0,T]$, and
$\bigcup_k[[\sigma^1_k]]\subset\bigcup_k[[\sigma^2_k]]$. If
$(Y^1-Y^2)^+\in\mathcal{S}^p$ for some $p>1$, then $Y^1_t\le
Y^2_t$, $t\in[0,T]$.
\end{proposition}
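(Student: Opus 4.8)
The plan is to prove that $(Y^1-Y^2)^+\equiv0$. Write $\hat Y=Y^1-Y^2$, $\hat Z=Z^1-Z^2$, $\hat V=V^1-V^2$, and let $\hat A$ denote the difference of the two pure right-jump reflection terms in (\ref{eq1}). First I would use Remark \ref{rem1} to reduce to the case $\mu\le0$ in (H2): the transformation there multiplies $\xi^i$, $V^i$, the barrier and the jump data by positive exponentials and subtracts a common linear term from the generators, so it preserves $\xi^1\le\xi^2$, $dV^1\le dV^2$, $U^1\le U^2$, the inequality $f^1(\cdot,Y^2,Z^2)\le f^2(\cdot,Y^2,Z^2)$ and the inclusion of jump graphs, while letting me fix the monotonicity constant of $f^1$ at any chosen value. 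Subtracting the two copies of (\ref{eq1}) then shows that $\hat Y$ solves a backward equation with terminal value $\hat\xi=\xi^1-\xi^2\le0$, driver increment $\hat f_s=f^1(s,Y^1_s,Z^1_s)-f^2(s,Y^2_s,Z^2_s)$, finite-variation input $d(\hat V+\hat A)$ and continuous martingale part $-\int\hat Z_s\,dB_s$; since $\{\sigma^i_k\}$ is finite, $\hat A$ has only finitely many (right) jumps.

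Next I would apply It\^o's formula for regulated processes (see Appendix) to $\phi(\hat Y)$ with $\phi(x)=(x^+)^2$, for which $\phi'(x)=2x^+$ is Lipschitz and $\phi''=2\,\mathbf 1_{(0,\infty)}$. Moving the quadratic-variation term to the left, for stopping times $\sigma\le\tau$ one obtains
\[
(\hat Y^+_\sigma)^2+\int_\sigma^\tau\mathbf 1_{\{\hat Y_s>0\}}|\hat Z_s|^2\,ds
=(\hat Y^+_\tau)^2+2\int_\sigma^\tau\hat Y^+_s\hat f_s\,ds+2\int_\sigma^\tau\hat Y^+_s\,d(\hat V+\hat A)_s-2\int_\sigma^\tau\hat Y^+_s\hat Z_s\,dB_s+J,
\]
where $J$ gathers the jump corrections. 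On $\{\hat Y_s>0\}$ I would split $\hat f_s$ into three telescoping differences through $f^1$: the $y$-increment is $\le\mu(\hat Y^+_s)^2\le0$ by (H2), the $z$-increment is $\le\lambda\hat Y^+_s|\hat Z_s|$ by (H1), and the remaining term $f^1(s,Y^2_s,Z^2_s)-f^2(s,Y^2_s,Z^2_s)$ is $\le0$ by hypothesis. Young's inequality then absorbs $2\lambda\hat Y^+_s|\hat Z_s|\le\tfrac12\mathbf 1_{\{\hat Y_s>0\}}|\hat Z_s|^2+2\lambda^2(\hat Y^+_s)^2$ into the left-hand side.

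It remains to show that all finite-variation and jump contributions are nonpositive, and I expect this to be the main obstacle, because the sign must be controlled at both the left- and the right-hand discontinuities of merely regulated paths. The continuous part of $d\hat V$ is nonpositive by $dV^1\le dV^2$, so (with $\hat Y^+\ge0$) its contribution is $\le0$. For the right-side jumps I would use that (\ref{eq1}) forces, at each jump time $\sigma$, the reflection identity making $Y^i_\sigma$ the maximum of $Y^i_{\sigma+}+\Delta^+V^i_\sigma$ and the barrier at $\sigma$ (the added negative part is exactly what lifts $Y^i$ up to the barrier). Since every right-jump time of $Y^1$ is one of $Y^2$ by $\bigcup_k[[\sigma^1_k]]\subset\bigcup_k[[\sigma^2_k]]$, and since $\Delta^+V^1\le\Delta^+V^2$ and $U^1\le U^2$, the $1$-Lipschitz monotonicity of $(a,c)\mapsto\max(a,c)$ yields $(\hat Y_\sigma)^+\le(\hat Y_{\sigma+})^+$ at every such $\sigma$; hence each right-jump increment of $\phi(\hat Y)$ is $\le0$. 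For the left-side jumps, which come from $(V^1-V^2)^*$, the inequality $dV^1\le dV^2$ gives $\hat Y_{s-}\le\hat Y_s$, so monotonicity of $\phi$ makes those increments $\le0$, while convexity of $\phi$ makes the associated second-order It\^o corrections enter $J$ with a nonpositive sign. Thus $J\le0$ and the entire finite-variation/jump block is $\le0$.

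Finally I would localize and apply Gronwall's lemma. With $\tau_k=\inf\{t:\hat Y^+_t\ge k\text{ or }\int_0^t|\hat Z_s|^2\,ds\ge k\}\wedge T$ the stochastic integral becomes a genuine martingale and every term is integrable; since $\hat Y^+\in\mathcal S^p$ ($p>1$) is pathwise bounded and $\hat Z\in\mathcal H$, we have $\tau_k=T$ for $k$ large, so $\{\tau_k\}$ is stationary. Taking expectations, using $\hat Y^+_T=(\xi^1-\xi^2)^+=0$ together with the estimates above, leaves
\[
E(\hat Y^+_{t\wedge\tau_k})^2\le C\int_t^T E(\hat Y^+_{s\wedge\tau_k})^2\,ds,\qquad C=2\mu+2\lambda^2\le2\lambda^2 .
\]
Backward Gronwall gives $E(\hat Y^+_{t\wedge\tau_k})^2=0$, and letting $k\to\infty$ we conclude $\hat Y^+_t=0$ for every $t\in[0,T]$, that is $Y^1\le Y^2$.
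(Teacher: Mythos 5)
Your treatment of the reflection jumps is sound and is essentially the paper's own argument in disguise: the identity $Y^i_{\sigma}=(Y^i_{\sigma+}+\Delta^+V^i_{\sigma})\vee L_{\sigma}$ (this is (\ref{eq2.2.1}) in the paper, a consequence of (\ref{3.2.1})), combined with the inclusion $\bigcup_k[[\sigma^1_k]]\subset\bigcup_k[[\sigma^2_k]]$ and monotonicity of the max, gives $(\hat Y_{\sigma})^+\le(\hat Y_{\sigma+})^+$, which is exactly what the paper's computation culminating in (\ref{eq3}) establishes. The genuine gap is in your choice of test function $\phi(x)=(x^+)^2$ and the ensuing Gronwall step. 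The hypothesis is only $(Y^1-Y^2)^+\in\mathcal{S}^p$ for \emph{some} $p>1$, and $p<2$ is the case that actually matters (in Remark \ref{uw2.0} the proposition is invoked with $p=q/\alpha$, $q\le 1$, which can be arbitrarily close to $1$). With only $E\sup_t(\hat Y^+_t)^p<\infty$, $p<2$, the squared quantities in your final display are not under control. Concretely: localization does not bound $\hat Y^+_{\tau_k}$ by $k$, because all jumps of $\hat Y^+$ are upward (left jumps by $dV^1\le dV^2$, right jumps by your own max argument), so $\hat Y^+$ can overshoot the level $k$ at $\tau_k$ by an amount that need not be square integrable; and your last inequality silently discards the boundary term $E\bigl[(\hat Y^+_{\tau_k})^2\mathbf{1}_{\{\tau_k<T\}}\bigr]$, which is present for every fixed $k$ (on $\{\tau_k<T\}$ one has $\hat Y^+_{\tau_k}\ne\hat Y^+_T$). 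To make that term vanish as $k\to\infty$ you would need uniform integrability of $\{(\hat Y^+_{\tau_k})^2\}$, i.e.\ essentially $\mathcal{S}^2$-integrability, which is not assumed; stationarity of $\{\tau_k\}$ alone only gives a.s.\ convergence, not convergence of expectations.

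This is precisely why the paper proceeds differently: it fixes $q>1$ with $(Y^1-Y^2)^+\in\mathcal{S}^q$ and works with the exponent $p\in(1,q)$ \emph{strictly below} $q$, so that $((\hat Y^+_{\sigma_k})^p)_k$ is dominated by $(\sup_t\hat Y^+_t)^p$ with $E(\sup_t\hat Y^+_t)^q<\infty$, hence uniformly integrable, and the boundary term converges to $E((\xi^1-\xi^2)^+)^p=0$. Moreover, by Remark \ref{rem1} it takes $\mu$ negative enough that the drift produced by Young's inequality is absorbed outright, so no Gronwall iteration (and hence no finiteness of $\int_0^T E(\hat Y^+_s)^2\,ds$) is ever needed; the inequality (\ref{eq2}) reduces to a supermartingale-type bound. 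A second, smaller defect in your write-up: Gronwall at deterministic times only yields $P(\hat Y^+_t>0)=0$ for each fixed $t$, and since $\hat Y$ is merely regulated (not right-continuous), a countable dense set of times does not determine the process; to get the conclusion simultaneously for all $t$ you must run the estimate up to an arbitrary stopping time and invoke the Section Theorem, as the paper does in its last line. Your argument would be correct as written under the stronger hypothesis $(Y^1-Y^2)^+\in\mathcal{S}^2$, but not under the stated one.
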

\begin{proof}
Let $q>1$ be such that $(Y^1-Y^2)^+\in\mathcal{S}^q$ and
$p\in(1,q)$.  Without loss of generality we may assume that
$\mu=-\frac{4\lambda}{p-1}$. By (H1) and (H2),
\begin{align*}
&((Y^1_s-Y^2_s)^+)^{p-1}(f^1(s,Y^1_s,Z^1_s)-f^2(s,Y^2_s,Z^2_s))\\
&\quad\le((Y^1_s-Y^2_s)^+)^{p-1}(f^1(s,Y^1_s,Z^1_s)-f^1(s,Y^2_s,Z^2_s))\\
&\quad\le-\frac{4\lambda}{p-1}((Y^1_s-Y^2_s)^+)^p
+\lambda((Y^1_s-Y^2_s)^+)^{p-1}|Z^1_s-Z^2_s|
\end{align*}
for $s\in[0,T]$. By Corollary \ref{cor4}, for
$\tau,\sigma\in\Gamma$ such that $\tau\le\sigma$ we have
\begin{align*}
&((Y^1_{\tau}-Y^2_{\tau})^+)^p
+\frac{p(p-1)}{2}\int^{\sigma}_{\tau}((Y^1_s-Y^2_s)^+)^{p-2}
\mathbf{1}_{\{Y^1_s>Y^2_s\}}|Z^1_s-Z^2_s|^2\,ds\\
&\quad\le((Y^1_{\sigma}-Y^2_{\sigma})^+)^p+p\int^{\sigma}_{\tau}
((Y^1_s-Y^2_s)^+)^{p-1}(f^1(s,Y^1_s,Z^1_s)-f^2(s,Y^2_s,Z^2_s))\,ds\\
&\quad\quad+p\int^{\sigma}_{\tau}((Y^1_{s-}-Y^2_{s-})^+)^{p-1}
\,d(V^1_s-V^2_s)^*+p\sum_{\tau\le s<\sigma}((Y^1_s-Y^2_s)^+)^{p-1}
\Delta^+(V^1_s-V^2_s)\\
&\quad\quad+p\sum_{\tau\le\sigma^1_k<\sigma}
((Y^1_{\sigma^1_k}-Y^2_{\sigma^1_k})^+)^{p-1}
(Y^1_{\sigma^1_k+}+\Delta^+V^1_{\sigma^1_k}-L_{\sigma^1_k})^-\\
&\quad\quad-p\sum_{\tau\le\sigma^2_k<\sigma}
((Y^1_{\sigma^2_k}-Y^2_{\sigma^2_k})^+)^{p-1}
(Y^2_{\sigma^2_k+}+\Delta^+V^2_{\sigma^2_k}-L_{\sigma^2_k})^-\\
&\quad\quad-p\int^{\sigma}_{\tau}((Y^1_s-Y^2_s)^+)^{p-1}(Z^1_s-Z^2_s)\,dB_s.
\end{align*}
By the above and the assumptions,
\begin{align}
\label{eq2.2.0}
&((Y^1_{\tau}-Y^2_{\tau})^+)^p+\frac{p(p-1)}{2}
\int^{\sigma}_{\tau}((Y^1_s-Y^2_s)^+)^{p-2}
\mathbf{1}_{\{Y^1_s>Y^2_s\}}|Z^1_s-Z^2_s|^2\,ds\nonumber\\
&\quad\le((Y^1_{\sigma}-Y^2_{\sigma})^+)^p
-\frac{4\lambda}{p-1}\int^{\sigma}_{\tau}((Y^1_s-Y^2_s)^+)^p\,ds
+\lambda\int^{\sigma}_{\tau}
((Y^1_s-Y^2_s)^+)^{p-1}|Z^1_s-Z^2_s|\,ds\nonumber\\
&\quad\quad+p\sum_{\tau\le\sigma^1_k<\sigma}
((Y^1_{\sigma^1_k}-Y^2_{\sigma^1_k})^+)^{p-1}
(Y^1_{\sigma^1_k+}
+\Delta^+V^1_{\sigma^1_k}-L_{\sigma^1_k})^-\nonumber\\
&\quad\quad-p\sum_{\tau\le\sigma^2_k<\sigma}
((Y^1_{\sigma^2_k}-Y^2_{\sigma^2_k})^+)^{p-1}
(Y^2_{\sigma^2_k+}+\Delta^+V^2_{\sigma^2_k}-L_{\sigma^2_k})^-
\nonumber\\
&\quad\quad-p\int^{\sigma}_{\tau}((Y^1_s-Y^2_s)^+)^{p-1}
(Z^1_s-Z^2_s)\,dB_s.
\end{align}
Since $\bigcup_k[[\sigma^1_k]]\subset\bigcup_k[[\sigma^2_k]]$,
\begin{align*}
&\sum_{\tau\le\sigma^1_k<\sigma}
((Y^1_{\sigma^1_k}-Y^2_{\sigma^1_k})^+)^{p-1}(Y^1_{\sigma^1_k+}
+\Delta^+V^1_{\sigma^1_k}-L_{\sigma^1_k})^-\\
&\quad\quad-\sum_{\tau\le\sigma^2_k<\sigma}
((Y^1_{\sigma^2_k}-Y^2_{\sigma^2_k})^+)^{p-1}(Y^2_{\sigma^2_k+}
+\Delta^+V^2_{\sigma^2_k}-L_{\sigma^2_k})^-\\
&\quad\le \sum_{\tau\le\sigma^1_k<\sigma}
((Y^1_{\sigma^1_k}-Y^2_{\sigma^1_k})^+)^{p-1}\{(Y^1_{\sigma^1_k+}
+\Delta^+V^1_{\sigma^1_k}-L_{\sigma^1_k})^--(Y^2_{\sigma^1_k+}
+\Delta^+V^2_{\sigma^1_k}-L_{\sigma^1_k})^-\}=:I.
\end{align*}
We shall show that $I\le0$.  Under the assumption that
$Y^1_{\sigma^1_k}\le Y^2_{\sigma^1_k}$ the this  inequality is
obvious. Assume now that $Y^1_{\sigma^1_k}>Y^2_{\sigma^1_k}$. By
(\ref{3.2.1}),
\begin{equation}\label{eq2.2.1}
Y^i_{\sigma^i_k}=(Y^i_{\sigma^i_k+}+\Delta^+V^i_{\sigma^i_k})
\vee L_{\sigma^i_k},\quad i=1,2.
\end{equation}
We have $Y^1_{\sigma^1_k}>Y^2_{\sigma^1_k}\ge L_{\sigma^1_k}$. By
this and (\ref{eq2.2.1}),
$Y^1_{\sigma^1_k+}+\Delta^+V^1_{\sigma^1_k}\ge L_{\sigma^1_k}$.
Hence
$(Y^1_{\sigma^1_k+}+\Delta^+V^1_{\sigma^1_k}-L_{\sigma^1_k})^-=0$,
which implies that
\begin{equation}
\label{eq3} I=-\sum_{\tau\le\sigma^1_k<T}
((Y^1_{\sigma^1_k}-Y^2_{\sigma^1_k})^+)^{p-1}(Y^2_{\sigma^1_k+}
+\Delta^+V^2_{\sigma^1_k}-L_{\sigma^1_k})^-\le0.
\end{equation}
Note that
\begin{align*}
&p\lambda((Y^1_s-Y^2_s)^+)^{p-1}|Z^1_s-Z^2_s|\\
&\quad=p((Y^1_s-Y^2_s)^+)^{p-2}\mathbf{1}_{\{Y^1_s>Y^2_s\}}
(\lambda(Y^1_s-Y^2_s)^+|Z^1_s-Z^2_s|)\\
&\quad\le p((Y^1_s-Y^2_s)^+)^{p-2}\mathbf{1}_{\{Y^1_s>Y^2_s\}}
\big(\frac{4\lambda^2}{p-1}((Y^1_s-Y^2_s)^+)^2+\frac{p-1}{4}
|Z^1_s-Z^2_s|^2\big)\\
&\quad=\frac{4p\lambda^2}{p-1}((Y^1_s-Y^2_s)^+)^p
+\frac{p(p-1)}{4}((Y^1_s-Y^2_s)^+)^{p-2}
\mathbf{1}_{\{Y^1_s>Y^2_s\}}|Z^1_s-Z^2_s|^2
\end{align*}
for $s\in[0,T]$. From this and (\ref{eq2.2.0}), (\ref{eq3}) it
follows that
\begin{align}
\label{eq2}
&((Y^1_{\tau}-Y^2_{\tau})^+)^p+\frac{p(p-1)}{4}
\int^\sigma_{\tau}((Y^1_s-Y^2_s)^+)^{p-2}
\mathbf{1}_{\{Y^1_s>Y^2_s\}}|Z^1_s-Z^2_s|^2\,ds\nonumber\\
&\quad\le((Y^1_{\sigma}-Y^2_{\sigma})^+)^p-p\int^\sigma_{\tau}
((Y^1_s-Y^2_s)^+)^{p-1}(Z^1_s-Z^2_s)\,dB_s\nonumber\\&
\quad=((Y^1_{\sigma}-Y^2_{\sigma})^+)^p+M_\sigma-M_{\tau}.
\end{align}
Let $\{\sigma_k\}\in\Gamma$ be a fundamental sequence  for the
local martingale $M$. Changing $\sigma_k$ with $\sigma$ in the
above inequality, taking expected value and passing to the limit
with $k\rightarrow\infty$ we get $E(Y^1_{\tau}-Y^2_{\tau})=0$. By
The Section Theorem, $(Y^1_t-Y^2_t)^+=0$, $t\in[0,T]$.
\end{proof}

\begin{remark}
\label{uw2.1} Observe that if $f,\, f'$ do not depend on $z$ then
it  is enough to assume that $(Y-Y')^+$ is of class
\textnormal{(D)}.
\end{remark}

\begin{remark}
\label{uw2.0} Let $f^1$, $f^2$, $\xi^1$, $\xi^2$, $dV^1$, $dV^2$,
$\bigcup_k[[\sigma^1_k]]$, $\bigcup_k[[\sigma^2_k]]$ be satisfying
the same assumptions as in Proposition \ref{prop2.2}. Moreover,
assume that $f^1$ satisfies (Z) and $Z^1,Z^2\in
L^q((0,T)\otimes\Omega)$ for some $q\in(\alpha,1]$. Then
$(Y^1-Y^2)^+\in\mathcal{S}^p$ for some $p>1$.
\end{remark}
\begin{proof}
By Corollary (\ref{cor4}), assumptions on the data and
(\ref{eq3}),
\begin{align*}
(Y^1_t-Y^2_t)^+&\le(\xi^1-\xi^2)^+
+\int^T_t\mathbf{1}_{\{Y^1_s>Y^2_s\}}(f^1(s,Y^1_s,Z^1_s)
-f^2(s,Y^2_s,Z^2_s))\,ds\\
&\quad+\int^T_t\mathbf{1}_{\{Y^1_{s-}>Y^2_{s-}\}}\,d(V^1_s-V^2_s)^*
+\sum_{t\le s<T}\mathbf{1}_{\{Y^1_s>Y^2_s\}}\Delta^+(V^1_s-V^2_s)\\
&\quad-\int^T_t\mathbf{1}_{\{Y^1_s>Y^2_s\}}(Z^1_s-Z^2_s)\,dB_s\\
&\quad+\sum_{t\le\sigma^1_k<T}\mathbf{1}_{\{Y^1_s>Y^2_s\}}
(Y^1_{\sigma^1_k+}+\Delta^+V^1_{\sigma^1_k}-L_{\sigma^1_k})^-\\
&\quad-\sum_{t\le\sigma^2_k<T}\mathbf{1}_{\{Y^1_s>Y^2_s\}}
(Y^2_{\sigma^2_k+}+\Delta^+V^2_{\sigma^2_k}-L_{\sigma^2_k})^-\\
&\le\int^T_t\mathbf{1}_{\{Y^1_s>Y^2_s\}}(f^1(s,Y^1_s,Z^1_s)
-f^1(s,Y^2_s,Z^2_s))\,ds\\
&\quad-\int^T_t\mathbf{1}_{\{Y^1_s>Y^2_s\}}(Z^1_s-Z^2_s)\,dB_s.
\end{align*}
Note that by (Z),
\begin{align*}
&|f^1(s,Y^1_s,Z^1_s)-f^1(s,Y^2_s,Z^2_s)|
\le|f^1(s,Y^1_s,Z^1_s)-f^1(s,Y^2_s,0)|\\
&\quad+|f^1(s,Y^2_s,0)-f^1(s,Y^2_s,Z^2_s)|
\le2\gamma(g_s+|Y^1_s|+|Y^2_s|+|Z^1_s|+|Z^2_s|)^{\alpha}
\end{align*}
for $s\in[0,T]$. Hence
\begin{align*}
(Y^1_t-Y^2_t)^+\le2\gamma
E\Big(\int^T_0(g_s+|Y^1_s|+|Y^2_s|+|Z^1_s|
+|Z^2_s|)^{\alpha}\,ds|\mathcal{F}_t\Big).
\end{align*}
Let $p>1$ be such that $\alpha\cdot p=q$. By Doob's inequality,
\begin{align*}
E\sup_{t\le T}((Y^1_t-Y^2_t)^+)^p
\le C_pE\Big(\int^T_0(g_s+|Y^1_s|+|Y^2_s|+|Z^1_s|+|Z^2_s|)^q\,ds\Big).
\end{align*}
Hence $(Y^1-Y^2)^+\in\mathcal{S}^p$.
\end{proof}

\begin{lemma}
\label{lm2.1} Let $x:[0,T]\rightarrow \BR$ be nonnegative, and
measurable and $y:[0,T]\rightarrow \BR$ be nondecreasing and
continuous. If for every $t\in(0,T]$ such that $x(t)>0$ there
exists $\varepsilon_t>0$ such that $\int_{t-\varepsilon_t}^t
x(s)\,dy(s)=0$,  then $\int_0^T x(s)\,dy(s)=0$.
\end{lemma}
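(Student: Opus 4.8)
The plan is to reinterpret the statement measure-theoretically. Since $y$ is nondecreasing and continuous, $dy$ is a finite, nonnegative, atomless Borel measure on $[0,T]$; call it $\nu$. Writing $A=\{t\in[0,T]:x(t)>0\}$ (which is measurable since $x$ is) and using $x\ge 0$, the conclusion $\int_0^T x\,dy=0$ is equivalent to $x=0$ for $\nu$-a.e.\ $t$, i.e.\ to $\nu(A)=0$. Likewise, since $x>0$ on $A$, the hypothesis $\int_{t-\varepsilon_t}^t x\,dy=0$ is equivalent to $\nu\big(A\cap(t-\varepsilon_t,t]\big)=0$ for each $t\in A$, the choice of endpoints being immaterial because $\nu$ is atomless. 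Thus the whole lemma reduces to the covering statement: \emph{if every $t\in A$ admits $\varepsilon_t>0$ with $\nu\big(A\cap(t-\varepsilon_t,t]\big)=0$, then $\nu(A)=0$.}

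To prove this I would split $A$ by means of the open set $A':=\bigcup_{t\in A}(t-\varepsilon_t,t)$. First I handle $A\cap A'$. The open intervals $(t-\varepsilon_t,t)$, $t\in A$, cover $A'$, and since $A'$ is an open subset of $\R$ it is Lindel\"of, so there is a countable subcover $A'\subset\bigcup_k(t_k-\varepsilon_{t_k},t_k)$. Then $\nu(A\cap A')\le\sum_k\nu\big(A\cap(t_k-\varepsilon_{t_k},t_k]\big)=0$.

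The remaining piece $A\setminus A'$ is where the real subtlety lies, and it is here that the one-sidedness of the intervals must be exploited. I would show that the half-open intervals $\{(t-\varepsilon_t,t]:t\in A\setminus A'\}$ are pairwise disjoint. Indeed, if $t_1<t_2$ both lie in $A\setminus A'$, then $t_1\notin A'$ forces $t_1\notin(t_2-\varepsilon_{t_2},t_2)$; combined with $t_1<t_2$ this gives $t_1\le t_2-\varepsilon_{t_2}$, so every point of $(t_1-\varepsilon_{t_1},t_1]$ is $\le t_2-\varepsilon_{t_2}$ while every point of $(t_2-\varepsilon_{t_2},t_2]$ is $>t_2-\varepsilon_{t_2}$, whence the two intervals are disjoint. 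A pairwise disjoint family of nondegenerate intervals in $\R$ is countable, and $t\mapsto(t-\varepsilon_t,t]$ is injective on $A\setminus A'$, so $A\setminus A'$ is countable; since $\nu$ is atomless, $\nu(A\setminus A')=0$.

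Combining the two parts yields $\nu(A)=\nu(A\cap A')+\nu(A\setminus A')=0$, which is the desired conclusion. The crux of the argument, and the step I expect to be the main obstacle, is precisely the treatment of $A\setminus A'$: because the prescribed intervals are left-sided they need not cover a neighborhood of any point, so a naive ``supremum of the good region'' or minimal-point argument breaks down, the lengths $\varepsilon_t$ being allowed to shrink to $0$ as one approaches a boundary point from the right. The disjointness observation is what restores countability there, and the continuity of $y$ (atomlessness of $\nu$) is exactly what makes the resulting countable set negligible.
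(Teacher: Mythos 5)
Your proof is correct, but it takes a genuinely different route from the paper's. The paper argues by contradiction via differentiation of measures: setting $F(t)=\int_0^t x(s)\,dy(s)$, it invokes the (cited as well-known) fact that the left derivate $f(t)=\liminf_{\varepsilon\searrow 0}\,\bigl(F(t)-F(t-\varepsilon)\bigr)/\bigl(y(t)-y(t-\varepsilon)\bigr)$ is Borel measurable and equals $x$ $dy$-a.e.; if $\int_0^T x\,dy>0$, then the set $\{x>0\}\cap\{f=x\}$ has positive $dy$-measure, and at any point $t$ of this set the derivate is strictly positive, so $\int_{t-\varepsilon}^t x\,dy>0$ for every $\varepsilon>0$, contradicting the hypothesis. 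Your argument replaces this appeal to a Lebesgue-type differentiation theorem for Stieltjes measures by a purely elementary covering argument: the Lindel\"of property disposes of $A\cap A'$, and the pairwise-disjointness observation plus atomlessness of $dy$ disposes of the set $A\setminus A'$, which, as you rightly stress, is exactly where naive covering or supremum arguments fail because the intervals are one-sided and their lengths may shrink. What your route buys is self-containedness and a transparent localization of where continuity of $y$ enters (only through atomlessness, to annihilate a countable exceptional set); what the paper's route buys is brevity, at the price of citing a nontrivial differentiation theorem. One pedantic point: the hypothesis is stated only for $t\in(0,T]$, so you should run your reduction with $A\cap(0,T]$ in place of $A=\{t\in[0,T]:x(t)>0\}$; since $\nu$ is atomless, the possible point $t=0$ is $\nu$-negligible and nothing changes.
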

\begin{proof}
Suppose that $\int_0^T x(s)\,dy(s)>0$. Set  $F(t)=\int_0^t
x(s)\,dy(s),\, t\in [0,T]$. It is well known that the function
\begin{equation}
\label{eq11.1}
f(t)\equiv\liminf_{\varepsilon\searrow 0}
\frac{F(t)-F(t-\varepsilon)}{y(t)-y(t-\varepsilon)}
\end{equation}
is Borel measurable and $f=x,\,dy$-a.e. Let
\[
A=\{t\in (0,T]: x(t)>0\},\quad B=\{t\in (0,T]:f(t)=x(t)\}.
\]
By the assumption, $dy(A\cap B)>0$. Let $t\in A\cap B$. Then
$x(t)>0$ and  by (\ref{eq11.1}), $\int_{t-\varepsilon}^t
x(s)\,dy(s)>0$ for every $\varepsilon >0$, which contradicts the
assumption.
\end{proof}

\begin{proposition}
\label{prop2.3} Assume that $Y$ is the Snell envelope of the
optional process $L$ of class \textnormal{(D)}. Let $A$ be the
continuous part of the increasing process $K$ from  Mertens
decomposition of $Y$. Then
\begin{equation}
\label{eq2.3.0}
\int_0^T(Y_r-\underline{L}_r)\,dA_r=0,
\end{equation}
where $\underline{L}_t=\limsup_{s\nearrow t}L_s$.
\end{proposition}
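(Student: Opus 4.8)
The plan is to apply Lemma \ref{lm2.1} pathwise, with $y=A$ and $x$ the nonnegative function $x(r)=Y_{r-}-\underline{L}_r$. First I record two reductions. Since $Y=\mathrm{Snell}(L)$ dominates $L$, we have $Y_s\ge L_s$ for all $s$, and because $Y$ is regulated its left limit satisfies $Y_{t-}=\limsup_{s\uparrow t}Y_s\ge\limsup_{s\uparrow t}L_s=\underline{L}_t$; hence $x\ge0$, and $r\mapsto x(r)$ is Borel measurable for each fixed $\omega$. Next, $A$ is continuous, so $dA$ is atomless, while $Y$ (being regulated) has at most countably many times with $\Delta^-Y\neq0$; therefore $Y_r=Y_{r-}$ for $dA$-almost every $r$, and $\int_0^T(Y_r-\underline{L}_r)\,dA_r=\int_0^T x(r)\,dA_r$. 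Thus it suffices to prove $\int_0^T x\,dA=0$, for which, by Lemma \ref{lm2.1}, I only need: for each $t$ with $x(t)>0$ there is $\varepsilon_t>0$ with $\int_{t-\varepsilon_t}^t x\,dA=0$.

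Fix $\omega$ and $t\in(0,T]$ with $x(t)=Y_{t-}-\underline{L}_t=:\delta>0$. I would produce a left-neighbourhood on which $Y$ strictly exceeds $\underline{L}$. By definition of $\underline{L}_t$ there is $\varepsilon_1>0$ with $L_u\le\underline{L}_t+\delta/4$ for $u\in(t-\varepsilon_1,t)$, and since $Y_s\to Y_{t-}$ as $s\uparrow t$ there is $\varepsilon_2>0$ with $Y_s\ge\underline{L}_t+3\delta/4$ for $s\in(t-\varepsilon_2,t)$. Put $\varepsilon_t=\varepsilon_1\wedge\varepsilon_2$. For $s\in(t-\varepsilon_t,t)$ the bound on $L$ over $(t-\varepsilon_1,s)$ yields $\underline{L}_s=\limsup_{u\uparrow s}L_u\le\underline{L}_t+\delta/4$, so that $Y_s-\underline{L}_s\ge\delta/2>0$ on the whole interval $(t-\varepsilon_t,t)$.

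It remains to show that $A$ is constant on $(t-\varepsilon_t,t)$; by continuity of $A$ this gives $A_t=A_{t-\varepsilon_t}$, hence $\int_{t-\varepsilon_t}^t x\,dA=0$, completing the application of Lemma \ref{lm2.1}. This is the crux and the step I expect to be the main obstacle. The idea is that on the continuation region $\{Y>\underline{L}\}$ the Snell envelope cannot be decreasing: since $Y$ is the \emph{smallest} supermartingale dominating $L$ (by \cite{EK}), on an interval where $Y$ stays strictly above the left-upper envelope $\underline{L}$ it must agree with a martingale, so the increasing process $K$ from the Mertens decomposition — and in particular its continuous part $A$ — does not grow there. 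Making this rigorous is where the work lies: one has to convert the strict pathwise gap $Y-\underline{L}\ge\delta/2$ into a genuine local martingale property of $Y$, using the representation $Y_s=\esssup_{\tau\in\Gamma_s}E(L_\tau|\mathcal{F}_s)$ to argue that stopping inside $(t-\varepsilon_t,t)$ is strictly suboptimal, and handling carefully that $L$ is merely optional (which is precisely why $\underline{L}$, and not $L$ itself, is the correct obstacle). This flatness holds almost surely, and then for $P$-almost every $\omega$ the pathwise hypothesis of Lemma \ref{lm2.1} is verified for every $t$ with $x(t)>0$, so the lemma yields $\int_0^T(Y_r-\underline{L}_r)\,dA_r=0$.
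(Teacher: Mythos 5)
Your overall skeleton matches the paper's: both reduce the problem, via the pathwise Lemma \ref{lm2.1}, to producing for each $t$ with a positive gap a left neighbourhood $(t-\varepsilon_t,t)$ on which $\int(Y_r-\underline{L}_r)\,dA_r=0$, and your construction of the neighbourhood where $Y-\underline{L}\ge\delta/2$ (using the definition of $\limsup$ and the convergence $Y_s\to Y_{t-}$) is essentially the paper's own argument. But the step you defer --- that $A$ does not increase on this neighbourhood --- is not a technical afterthought; it is the entire content of the proposition, and your proposal leaves it unproved. The heuristic you give (``on the continuation region the Snell envelope agrees with a martingale, so $K$ is flat'') cannot be made rigorous in the form you state it: the gap $Y-\underline{L}\ge\delta/2$ holds on an interval $(t-\varepsilon_t,t)$ whose endpoints, and the constant $\delta$, depend on the individual $\omega$. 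Martingale/flatness properties are statements about conditional expectations and can only be localized along stopping times, not along $\omega$-dependent deterministic intervals chosen path by path; the ``stopping inside the interval is strictly suboptimal'' argument needs the gap to hold uniformly on an event, which your pathwise construction does not provide. This is precisely the obstruction the paper's proof is designed to circumvent.

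The paper's resolution is to import from El Karoui (Proposition 2.34 of \cite{EK}) the identity
\begin{equation*}
\int_t^{D^\lambda_t}(Y_r-\underline{L}_r)\,dA_r=0,\qquad
D^\lambda_t=\inf\{r\ge t:\ \lambda Y_r\le L_r\}\wedge T,
\end{equation*}
which is a statement about genuine stopping times and hence already encodes the ``no growth of $A$ before the (relaxed) contact time'' principle you are trying to invoke. The pathwise work then consists only in showing that the random interval you construct can be captured by countably many such identities: after reducing to $L\ge 0$, one chooses the left endpoint $t_\delta$ and the multiplicative level $\lambda$ to be \emph{rational} (possible because the gap is open), observes that $D^\lambda_{t_\delta}\ge t$ for that $\lambda$, and intersects over the countable family of $(t_\delta,\lambda)$ to get a single full-measure set on which Lemma \ref{lm2.1} applies. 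Without Proposition 2.34 of \cite{EK} (or an independent proof of a statement of that type, which would itself require the Mertens decomposition and a section/optimal-stopping argument), your proof does not go through; with it, the remaining steps you wrote are correct and coincide with the paper's.
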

\begin{proof}
We may assume that $L$ is nonnegative, otherwise since $L$ is of
class (D) there exists uniformly integrable martingale $M$ such
that $L+M$ is nonnegative. We consider then $\tilde{L}=L+M$. Its
Snell envelope is equal to $\tilde{Y}=Y+M$, and obviously the
finite variation part of Mertens decomposition of $\tilde{Y}$ is
equal to the finite variation part of  Mertens decomposition of
$Y$, so its continuous parts are also equal. Therefore if we prove
that the assertion of the proposition holds for $\tilde{L}$ then
we would have
\[
\int_0^T(Y_r-\underline{L}_r)\,dA_r=\int_0^T(\tilde{Y}_r
-\underline{\tilde{L}}_r)\,dA_r=0.
\]
By \cite[Proposition 2.34, p.~131]{EK},  for any $t\in [0,T)$ and
$\lambda>0$,
\begin{equation}
\label{eq1.2}
\int_t^{D^\lambda_t}(Y_r-\underline{L}_r)\,dA_r=0,\quad P\mbox{-a.s.},
\end{equation}
where $D^\lambda_t=\inf\{r\ge t,\, \lambda Y_r\le L_r\}\wedge T$.
Let $\Omega_{t,\lambda}$ be the set of those $\omega\in\Omega$ for
which the above equality holds. Set
\[
\Omega_0=\bigcap_{t\in [0,T)\cap\mathbb{Q},\,\lambda\in
\mathbb{Q}^+} \Omega_{t,\lambda}.
\]
It is obvious that $P(\Omega_0)=1$.  We will show that for every
$\omega\in\Omega_0$ the following property holds:
\[
\forall{t\in (0,T]: Y_t>\underline{L}_t}\quad
\exists{\varepsilon_t>0}\quad
\int_{t-\varepsilon_t}^t(Y_r-\underline{L}_r)\,dA_r=0,
\]
which when combined with Lemma \ref{lm2.1} implies
(\ref{eq2.3.0}). Suppose that there exists $t\in (0,T]$ such that
\begin{equation}
\label{eq1.3}
Y_t>\underline{L}_t,\quad
\int_{t-\varepsilon}^t(Y_r-\underline{L}_r)\,dA_r>0,\quad \varepsilon>0.
\end{equation}
By the definition, $\underline{L}_t=\lim_{\delta\searrow
0}\sup_{t-\delta\le s<t}L_s$. Therefore there exist $\varepsilon,
\delta_1>0$ such that
\[
Y_t\ge\sup_{t-\delta_1\le s<t}L_s+2\varepsilon.
\]
Since $Y$  has only negative jumps,  there exists  $\delta_2>0$ such that
\[
Y_r\ge\sup_{t-\delta_1\le s<t}L_s+\varepsilon,\quad r\in [t-\delta_2,t].
\]
Let $\delta=\max\{\delta_1,\delta_2\}$ and $t_\delta=t-\delta$.
Recall that $D^\lambda_{t_\delta}=\inf\{r\ge t_\delta,\, \lambda
Y_r\le L_r\}\wedge T. $ Hence $D^\lambda_{t_\delta}\ge t$ for
$\lambda=(\sup_{r\in [t_\delta,t]} L_r+\varepsilon/2)/\inf_{r\in
[t_\delta,t]}Y_r$. It is clear that we can choose $\varepsilon,
\delta$ so that $\lambda, t_\delta$ are rational. Therefore  from
(\ref{eq1.2}) it follows that
\[
\int_{t_\delta}^t(Y_r-\underline{L}_r)\,dA_r
\le \int_{t_\delta}^{D^\lambda_{t_\delta}}(Y_r-\underline{L}_r)\,dA_r=0,
\]
which contradicts (\ref{eq1.3}).
\end{proof}

\begin{corollary}\label{wn2.1.0}
Let  $Y$ be the Snell envelope of an optional process $L$ of class
\textnormal{(D)}, and let  Let $K$ be an increasing process from
Mertens decomposition of  $Y$. Then
\[
\int^T_0(Y_r-\underline{L}_r)\,dK^*_r
=\sum_{t<T}(Y_t-\underline{L}_t)\Delta^+K_t=0.
\]
\end{corollary}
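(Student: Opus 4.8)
The plan is to derive Corollary \ref{wn2.1.0} as an immediate consequence of Proposition \ref{prop2.3}, which already handles the continuous part $A$ of the increasing process $K$. Since Mertens decomposition writes $Y = M - K$ where $M$ is a martingale and $K \in \mathcal{V}^+$ is the sum of its c\`adl\`ag part $K^*$ and its purely right-jumping part $K^d$, and since the continuous part of $K^*$ is precisely the process $A$ appearing in Proposition \ref{prop2.3}, the first integral splits as
\[
\int^T_0(Y_r-\underline{L}_r)\,dK^*_r
= \int^T_0(Y_r-\underline{L}_r)\,dA_r
+ \sum_{t<T}(Y_t-\underline{L}_t)\Delta^- K_t,
\]
where the continuous-part integral vanishes by \eqref{eq2.3.0}. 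So the task reduces to controlling the two jump sums: the left jumps $\Delta^- K_t$ of the c\`adl\`ag part and the right jumps $\Delta^+ K_t$.

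First I would treat the right jumps. By the general theory of Mertens decomposition of a supermartingale of class (D) (from \cite{EK}), the right jumps of $Y$ satisfy the optimality relation $\Delta^+ Y_t = -\Delta^+ K_t$, and a predictable right jump of the dominating supermartingale can occur only at instants where $Y_t = L_t$; more precisely the Snell envelope satisfies $Y_t = Y_{t+} \vee L_t$ (the right-continuity of $Y$ fails exactly where $L$ pushes $Y$ up from the right). Hence on $\{\Delta^+ K_t > 0\}$ one has $Y_t = L_t \le \underline{L}_t$; combined with $Y_t \ge \underline{L}_t$ (which holds because $Y$ is a supermartingale with only negative jumps dominating $L$, so $Y_t \ge \limsup_{s\nearrow t} Y_s \ge \limsup_{s\nearrow t} L_s = \underline{L}_t$), this forces $Y_t = \underline{L}_t$ at every such $t$, and therefore $\sum_{t<T}(Y_t-\underline{L}_t)\Delta^+ K_t = 0$.

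Next I would treat the left jumps $\Delta^- K_t$ of $K^*$. The analogous characterization of Snell envelopes says that a left jump of the c\`adl\`ag part of $K$ occurs only where $Y_{t-} = \underline{L}_t$, i.e. the supermartingale only decreases (through a left jump) when it has just touched the upper left-limit of the barrier. Since $Y$ has only negative jumps, again $Y_t \ge \underline{L}_t$, and at a left-jump instant one checks $Y_t \le Y_{t-} = \underline{L}_t$, giving $Y_t = \underline{L}_t$ there as well, so $\sum_{t<T}(Y_t - \underline{L}_t)\Delta^- K_t = 0$. Combining the three vanishing pieces yields the stated identity.

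I expect the main obstacle to be pinning down the jump characterizations of $K$ precisely, i.e. justifying that both right and left jumps of the Mertens-decomposition process $K$ can only occur on the contact set $\{Y = \underline{L}\}$ (equivalently $\{Y_{t-} = \underline{L}_t\}$ for left jumps and $\{Y_t = L_t\}$ for right jumps). This is the optional/regulated analogue of the classical fact that the Snell envelope's finite-variation part increases only on the contact set; the subtlety is that the barrier $L$ is merely optional (not c\`adl\`ag), so one must use $\underline{L}_t = \limsup_{s\nearrow t} L_s$ rather than $L_{t-}$ throughout, and appeal to the finer results on Mertens decomposition in \cite{EK} (e.g. the same \cite[Proposition 2.34]{EK} used in Proposition \ref{prop2.3}) to locate the jumps. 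Once these contact properties are in hand, each sum collapses by the pointwise identity $Y = \underline{L}$ on the support of the corresponding jump measure.
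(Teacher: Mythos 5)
Your overall skeleton coincides with the paper's proof: the paper splits $dK^*$ into its continuous part $dA$, handled by Proposition \ref{prop2.3}, plus the left-jump part, and disposes of both jump sums in one stroke by quoting \cite[Proposition 2.34, p.~131]{EK}, which gives
$\sum_{t\le T}(Y_{t-}-\underline{L}_{t})\Delta^-K_t+\sum_{t<T}(Y_t-L_t)\Delta^+K_t=0$.
Your two contact conditions --- $Y_t=L_t$ on $\{\Delta^+K_t>0\}$ (via $Y_t=Y_{t+}\vee L_t$) and $Y_{t-}=\underline{L}_t$ on $\{\Delta^-K_t>0\}$ --- are exactly the content of that citation, so up to this point you have reproduced the paper's argument.

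The genuine gap lies in the two claims you use to upgrade these conditions to ``$Y_t=\underline{L}_t$ on the supports of $\Delta^{\pm}K$''. First, ``$Y$ has only negative jumps'' means $Y$ jumps \emph{downward}, i.e. $Y_t\le Y_{t-}$ and $Y_{t+}\le Y_t$; your inequality $Y_t\ge\limsup_{s\nearrow t}Y_s$ is the reverse of what holds, so $Y_t\ge\underline{L}_t$ does not follow (only $Y_{t-}\ge\underline{L}_t$ does). Second, $L_t\le\underline{L}_t$ is false for an optional barrier with an upward spike at $t$. The conclusions you draw from them genuinely fail: on $[0,1]$ take the deterministic barrier $L=\mathbf{1}_{[0,1/2)}$; then $Y=\mathbf{1}_{[0,1/2)}$, $K=K^*=\mathbf{1}_{[1/2,1]}$, and $(Y_{1/2}-\underline{L}_{1/2})\Delta^-K_{1/2}=(0-1)\cdot 1=-1$, so $\int_0^T(Y_r-\underline{L}_r)\,dK^*_r=-1\ne 0$; take instead $L=\mathbf{1}_{\{1/2\}}$; then $Y=\mathbf{1}_{[0,1/2]}$, $\Delta^+K_{1/2}=1$, and $(Y_{1/2}-\underline{L}_{1/2})\Delta^+K_{1/2}=1\ne 0$. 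What these examples show is that the corollary cannot be read literally: as in Definition \ref{def1}(b), and as needed for Proposition \ref{prop2.5}, the Stieltjes integrand must be $Y_{r-}-\underline{L}_r$ and the right-jump sum must involve $Y_t-L_t$ (the printed $Y_r$ and $\underline{L}_t$ are misprints); that corrected statement is what the paper's two-line proof establishes. For it, your correct steps already constitute a complete proof, and the two erroneous bridging claims should simply be deleted.
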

\begin{proof}
By \cite[Proposition 2.34, p.~131]{EK} we have
\[
\sum_{t\le T}(Y_{t-}-\underline{L}_{t})\Delta^-K_t+
\sum_{t<T}(Y_t-L_t)\Delta^+K_t=0.
\]
Therefore the desired result follows from  Proposition
\ref{prop2.3}.
\end{proof}

For optional processes $Y,Z$ we set
\[
f_{Y,Z}(t)=f(t,Y_t,Z_t),\, t\in [0,T].
\]

\begin{proposition}
\label{prop2.5} Let a triple $(Y,Z,K)$ be a solution of
\textnormal{RBSDE}$(\xi,f+dV,L)$ such that
$\int_0^T|f_{Y,Z}(s)|\,ds\in L^1$. Assume that $L^+$ is of class
\textnormal{(D)}, $\xi\in L^1$, $V\in\mathcal{V}^1$. Then for
$t\in[0,T]$,
\[
Y_t=\mathrm{ess}\sup_{\tau\in\Gamma_t}
E\big(\int_t^\tau f(s,Y_s,Z_s)\,ds+\int_t^\tau\,dV_s
+L_{\tau}{\bf 1}_{\{\tau<T\}}+\xi{\bf 1}_{\{\tau=T\}}|{\cal F}_t\big),
\]
where $\Gamma_t$ is set of all stopping times taking values in $[t,T]$.
\end{proposition}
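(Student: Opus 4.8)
The plan is to recast the optimal stopping problem as a statement about a Snell envelope, so that Proposition~\ref{prop2.3} and Corollary~\ref{wn2.1.0} can be used. Writing $g_s=f_{Y,Z}(s)$, I introduce the aggregated reward
\[
\Lambda_s=\big(L_s\mathbf 1_{\{s<T\}}+\xi\mathbf 1_{\{s=T\}}\big)+\int_0^s g_r\,dr+V_s,\qquad s\in[0,T],
\]
and the process $\hat Y_t=Y_t+\int_0^t g_r\,dr+V_t$. Since for every $\tau\in\Gamma_t$ one has $\int_t^\tau g_r\,dr+\int_t^\tau dV_r+L_\tau\mathbf 1_{\{\tau<T\}}+\xi\mathbf 1_{\{\tau=T\}}=\Lambda_\tau-(\int_0^t g_r\,dr+V_t)$, with the subtracted quantity being $\mathcal F_t$-measurable, the assertion of the proposition is equivalent to
\[
\hat Y_t=\mathrm{ess\,sup}_{\tau\in\Gamma_t}E(\Lambda_\tau\mid\mathcal F_t)=\mathrm{Snell}(\Lambda)_t,\qquad t\in[0,T].
\]
Thus the whole proof reduces to showing that $\hat Y$ is the Snell envelope of $\Lambda$.

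Next I would pin down the structure of $\hat Y$. Evaluating Definition~\ref{def1}(d) at $t=0$ gives $Y_0=\xi+\int_0^Tg_r\,dr+V_T+K_T-\int_0^TZ_r\,dB_r$, whence $\hat Y_t=Y_0+\int_0^tZ_r\,dB_r-K_t$. Hence $\hat Y$ is a local supermartingale with continuous martingale part $\int_0^\cdot Z\,dB$ and increasing part $K$; under the hypotheses ($Y$ of class (D), $\int_0^T|g_r|\,dr\in L^1$, $V\in\mathcal V^1$, $\xi\in L^1$, $L^+$ of class (D)) it is a genuine supermartingale of class (D), and the displayed formula is exactly its Mertens decomposition. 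Moreover $\hat Y_s\ge\Lambda_s$ for all $s$ (because $Y_s\ge L_s$ for $s<T$ and $Y_T=\xi$), with $\hat Y_T=\Lambda_T$, and $\Lambda^+$ is of class (D). Consequently $\hat Y$ dominates $\Lambda$, so by minimality of the Snell envelope $\hat Y\ge\mathrm{Snell}(\Lambda)$; equivalently $Y_t\ge E\big(\int_t^\tau g_r\,dr+\int_t^\tau dV_r+L_\tau\mathbf 1_{\{\tau<T\}}+\xi\mathbf 1_{\{\tau=T\}}\mid\mathcal F_t\big)$ for all $\tau\in\Gamma_t$, which one may also obtain directly by taking conditional expectations in (d) along a localizing sequence and using class (D) to discard the martingale term. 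This is the easy half.

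The crux is the reverse inequality $\hat Y\le\mathrm{Snell}(\Lambda)$. The idea is to show that $\hat Y$ satisfies exactly the flatness condition which, by Proposition~\ref{prop2.3} and Corollary~\ref{wn2.1.0}, characterizes the Snell envelope. Since $\int_0^\cdot g\,dr+V$ has left limits with $\int_0^u g\,dr+V_u\to\int_0^s g\,dr+V_{s-}$ as $u\uparrow s$, one computes $\limsup_{u\uparrow s}\Lambda_u=\underline L_s+\int_0^s g\,dr+V_{s-}$, so that $\hat Y_{s-}-\limsup_{u\uparrow s}\Lambda_u=Y_{s-}-\underline L_s$ and $\hat Y_s-\Lambda_s=Y_s-L_s$ for $s<T$. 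Because $\int_0^\cdot Z\,dB$ is continuous, the càdlàg part and the right jumps of the Mertens increasing process of $\hat Y$ are precisely $K^*$ and $\Delta^+K$, and hence Definition~\ref{def1}(b) is identical to
\[
\int_0^T\big(\hat Y_{s-}-\limsup_{u\uparrow s}\Lambda_u\big)\,dK^*_s+\sum_{s<T}(\hat Y_s-\Lambda_s)\Delta^+K_s=0,
\]
i.e. the increasing part of $\hat Y$ charges only the instants at which $\hat Y$ meets the obstacle $\Lambda$. By Corollary~\ref{wn2.1.0} the Mertens increasing process of $\mathrm{Snell}(\Lambda)$ enjoys the same flatness, and it then remains to invoke the converse direction of the characterization: a class-(D) supermartingale dominating $\Lambda$, agreeing with $\Lambda_T$ at $T$, whose Mertens increasing part is flat off $\Lambda$, must coincide with $\mathrm{Snell}(\Lambda)$. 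Concretely, flatness forces $\hat Y$ to be a martingale on each interval where it stays strictly above the obstacle, so that for suitable $\lambda$-optimal stopping times $D^\lambda_t$ (as in the proof of Proposition~\ref{prop2.3}) $K$ is constant on $[t,D^\lambda_t]$ while $\hat Y_{D^\lambda_t}$ hits $\Lambda$, yielding $\hat Y_t=E(\Lambda_{D^\lambda_t}\mid\mathcal F_t)\le\mathrm{Snell}(\Lambda)_t$ in the limit.

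I expect this last step to be the main obstacle. Two points are delicate: first, translating the ``regulated'' minimality condition (b), with its $\limsup_{u\uparrow\cdot}L$ barrier and its separate treatment of $K^*$ and the right jumps $\Delta^+K$, into the single flatness identity for $\hat Y$ relative to $\Lambda$ — this needs the careful identification of $\limsup_{u\uparrow s}\Lambda_u$ and the matching of left and right jumps indicated above; second, running the optimal-stopping construction for a merely optional (non-càdlàg) obstacle, where exactly optimal stopping times need not exist and one must work with the $\lambda$-approximations from El Karoui's theory, precisely the tool already exploited in Proposition~\ref{prop2.3}.
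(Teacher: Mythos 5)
Your reduction of the statement to the identity $\hat Y=\mathrm{Snell}(\Lambda)$, your proof of the easy half $\hat Y\ge\mathrm{Snell}(\Lambda)$, and your translation of Definition \ref{def1}(b) into flatness of the Mertens increasing part of $\hat Y$ relative to $\Lambda$ (the identification $\limsup_{u\uparrow s}\Lambda_u=\limsup_{u\uparrow s}L_u+\int_0^s f_{Y,Z}(r)\,dr+V_{s-}$, together with continuity of the Brownian martingale part, so that the Mertens increasing process of $\hat Y$ is exactly $K$) are all correct. The genuine gap is in your final step, the converse characterization. You propose to prove that a flat class-(D) supermartingale dominating $\Lambda$ with terminal value $\Lambda_T$ must equal $\mathrm{Snell}(\Lambda)$ by running the $\lambda$-hitting-time argument on $\hat Y$, and you justify the tool by pointing to Proposition \ref{prop2.3}. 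But Proposition \ref{prop2.3}, and El Karoui's Proposition 2.34 on which it rests, are statements about the Snell envelope itself: the facts that the increasing process does not act on $[t,D^\lambda_t]$ and that the process is ``caught'' by the obstacle at $D^\lambda_t$ come from the dynamic-programming structure of the ess sup, which for $\hat Y$ is precisely what you are trying to establish. For an arbitrary flat supermartingale over a merely optional obstacle the debut $D^\lambda_t$ of $\{\lambda\hat Y\le\Lambda\}$ need not be attained, so your step ``while $\hat Y_{D^\lambda_t}$ hits $\Lambda$'' has no justification and cannot be obtained by citing Proposition \ref{prop2.3}; repairing it would mean redoing the El Karoui/Kobylanski--Quenez machinery (positivity reduction, section-theorem selections of times in the optional set approximating the debut from the right, etc.) for general flat supermartingales. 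That is a substantial missing argument, not a routine technicality.

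The paper closes the loop in the opposite direction, which avoids hitting times altogether; its one-sentence proof (``the definition plus Corollary \ref{wn2.1.0}'') is most naturally unpacked this way, consistently with how Proposition \ref{prop2.4} and Corollary \ref{cor2.1} invoke uniqueness. Corollary \ref{wn2.1.0} is the \emph{forward} implication (Snell $\Rightarrow$ flat): combined with the Mertens decomposition of $S=\mathrm{Snell}(\Lambda)$ and the Brownian representation of its martingale part, it shows that $(S-\int_0^\cdot f_{Y,Z}(r)\,dr-V,\,Z',\,K')$ is a \emph{second} solution of the same equation, namely $\mathrm{RBSDE}(\xi,g+dV,L)$ with generator frozen at $g=f_{Y,Z}$. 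What remains is uniqueness for solutions with frozen generator, and that is proved by the energy technique of Proposition \ref{prop2.2}, i.e.\ It\^o's formula for regulated processes (Corollary \ref{cor4}), in the class-(D) form of Remarks \ref{uw2.1} and \ref{uw2.0}: setting $U=\hat Y-S$, flatness of $K$ and of $K'$ together with $\hat Y\ge\Lambda$, $S\ge\Lambda$ gives $\int_0^T U_{s-}\,d(K-K')^*_s\le 0$, while at right jumps a case check ($\Delta^+K_s>0$ forces $U_s\le 0$ and $U_{s+}\le U_s$; $\Delta^+K'_s>0$ forces $U_s\ge0$ and $U_{s+}\ge U_s$; both force $U_s=0$) gives $U_s^2-U_{s+}^2\le0$ in every case, so after localization $E\,U_\tau^2\le0$ for all stopping times $\tau$, i.e.\ $U\equiv0$. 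No attainment of debuts and no new optimal-stopping theory is needed, which is exactly why the paper only ever proves the forward direction of the Snell characterization. If you want to keep your architecture, replace your converse-characterization step by this comparison argument; as written, your proof is incomplete at exactly the point you flagged as delicate.
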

\begin{proof}
It follows from the definition of solution of RBSDE$(\xi,f+dV,L)$
and Corollary \ref{wn2.1.0}.
\end{proof}
For a given process $L$ of class (D) and integrable
$\FF_T$-measurable random  variable $\xi$ we denote by
$\mbox{Snell}_\xi(L)$ the smallest supermartingale $Z$ such that
$Z_t\ge L_t,\, t\in [0,T)$ and $Z_T=\xi$. It is easy to see that
$\mbox{Snell}_\xi(L)=\mbox{Snell}(L^\xi)$, where
$L^\xi_t=\mathbf{1}_{\{t<T\}}L_t+\mathbf{1}_{\{t=T\}}\xi$. From
Proposition \ref{prop2.5} it follows that $\mbox{Snell}_\xi(L)$ is
the first component of the solution of RBSDE$(\xi,0,L)$.

\begin{proposition}
\label{prop2.4} Assume that there is a progressively measurable
process $g$ such that $E\int_0^T|g(r)|\,dr<\infty$ and
$f(r,y,z)\ge g(r)$ for a.e. $r\in [0,T]$ and all $y\in\BR$,
$z\in\BR^d$.  Let
\[
\hat{L}=\mbox{\rm Snell}_\xi(L+X)-X,
\]
where $(X,\tilde{Z})$ is a solution of
\textnormal{BSDE}$(0,-g-dV)$. If a triple $(Y,Z,K)$ is a solution
of \textnormal{RBSDE}$(\xi,f+dV,\hat{L})$ with the property that
$\int^T_0|f_{Y,Z}(s)|\,ds\in L^1$, then $(Y,Z,K)$ is a solution of
\textnormal{RBSDE}$(\xi,f+dV,L)$.
\end{proposition}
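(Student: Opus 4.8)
The plan is to verify the four defining conditions of RBSDE$(\xi,f+dV,L)$ for the given triple $(Y,Z,K)$ directly. Conditions (a), (c) and the equation (d) do not involve the barrier and are inherited verbatim from the fact that $(Y,Z,K)$ solves RBSDE$(\xi,f+dV,\hat L)$, so everything reduces to condition (b). First I would record the elementary domination $\hat L\ge L$: on $[0,T)$ the Snell envelope dominates its obstacle, $S:=\mathrm{Snell}_\xi(L+X)\ge L+X$, whence $\hat L=S-X\ge L$, while at $T$ we have $\hat L_T=S_T-X_T=\xi\ge L_T$. Consequently $L\le\hat L\le Y$, which gives the barrier inequality in (b), and $\xi\ge L_T$ is assumed. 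It then remains only to upgrade the minimality condition from $\hat L$ to $L$. Since $Y\ge\hat L\ge L$, each term in the $L$-minimality expression is nonnegative, and subtracting the (vanishing) $\hat L$-minimality expression reduces the whole problem to showing that $dK$ charges only where $\hat L=L$; concretely, that $\underline{\hat L}_s=\underline{L}_s$ for $dK^*$-a.e.\ $s$ and that $\hat L_s=L_s$ at every right-jump time of $K$, equivalently that $dK$ does not charge the set $\{\hat L>L\}=\{S>L+X\}$.

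The main device is the auxiliary process $\Phi:=Y+X$. Adding the equations for $Y$ and for $X$ (the solution of BSDE$(0,-g-dV)$), the driver $dV$ cancels and one is left with
\[
\Phi_t=\Phi_0-\int_0^t\big(f(s,Y_s,Z_s)-g(s)\big)\,ds-K_t+\int_0^t(Z_s+\tilde Z_s)\,dB_s,\qquad t\in[0,T].
\]
Because $f(s,Y_s,Z_s)\ge g(s)$ by hypothesis and $K$ is increasing, the finite-variation part of $\Phi$ is nonincreasing, so $\Phi$ is a supermartingale (the integrability being guaranteed by the standing hypotheses) with $\Phi_T=\xi$; moreover $\Phi=Y+X\ge\hat L+X=S\ge L+X$ on $[0,T)$. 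Thus $\Phi$ and $S$ are two supermartingales with the same terminal value $\xi$, with $\Phi\ge S$, and the contact set $\{\Phi=S\}$ coincides with $\{Y=\hat L\}$, i.e.\ exactly the set where the $\hat L$-minimality condition permits $K$ to increase.

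Next I would bring in the Snell structure of $S$. Writing its Mertens decomposition $S=S_0+n-C$ with $n$ a local martingale and $C\in\mathcal{V}^+$, Corollary \ref{wn2.1.0} (applied to the optional obstacle $L+X$ with terminal value $\xi$) says that $C$ is inactive off the contact set with $L+X$:
\[
\int_0^T\big(S_{r-}-\underline{(L+X)}_r\big)\,dC^*_r=0,\qquad \sum_{t<T}\big(S_t-(L+X)_t\big)\Delta^+C_t=0.
\]
On $\{S>L+X\}=\{\hat L>L\}$ the process $C$ therefore does not increase, so there $S$ evolves as the local martingale $n$. The conclusion I want is that on this same region $K$ cannot increase either: where $\Phi=S$ and $S$ has no decreasing part, the supermartingale $\Phi$, which stays $\ge S$, cannot decrease, forcing both $f(\cdot,Y,Z)-g$ and $dK$ to vanish there. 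To make this rigorous I would apply the It\^o formula for regulated processes (see Appendix) to $W:=\Phi-S\ge0$, or argue as in Proposition \ref{prop2.3} together with Lemma \ref{lm2.1}: localize to the contact set and use that on $\{S>L+X\}$ any decrease of $\Phi$ would strictly separate $\Phi$ from $S$, contradicting $\Phi\ge S$. This yields $\int_0^T\mathbf{1}_{\{S>L+X\}}\,dK=0$, which splits into a continuous part (matched against $dK^*$ through the left-limit/limsup contact condition) and a right-jump part (matched against $\Delta^+K$). Combined with the reduction of the first paragraph, this gives the $L$-minimality condition and completes the verification.

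The hard part is precisely this last transfer: showing that inactivity of the reflecting process $C$ of $S$ on $\{S>L+X\}$ forces inactivity of $K$ on the same set. The difficulty is that $\Phi$ and $S$ are only regulated, with two-sided jumps, so one must separately control the continuous part $K^*$ (via a limsup/left-limit contact condition as in Proposition \ref{prop2.3}) and the right-jumps $\Delta^+K$, and verify that the decreasing parts of the two supermartingales may indeed be compared pathwise on the set where they touch. All the remaining steps---the domination $\hat L\ge L$, the cancellation of $dV$ in $\Phi=Y+X$, and the supermartingale property---are routine.
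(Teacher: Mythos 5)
Your reduction to the minimality condition, the domination $\hat L\ge L$, and the jump part of your transfer argument are all sound: at a right-jump time of $K$ the cancellation of $dV$ in $\Phi=Y+X$ gives $\Delta^+\Phi_s=-\Delta^+K_s$, and since $\Phi_s=S_s$ (contact forced by the $\hat L$-minimality) while $\Phi_{s+}\ge S_{s+}$, one gets $\Delta^+C_s\ge\Delta^+K_s>0$, whence $S_s=(L+X)_s$ by the Snell minimality, i.e.\ $Y_s=\hat L_s=L_s$; left jumps of $K^*$ work the same way because in the Brownian filtration the martingale parts of $\Phi$ and $S$ are continuous. The genuine gap is exactly the step you flag as ``the hard part'': the continuous part of $K^*$. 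The heuristic you offer for it --- ``where $\Phi=S$ and $S$ has no decreasing part, the supermartingale $\Phi\ge S$ cannot decrease, since any decrease would strictly separate $\Phi$ from $S$'' --- is not a valid argument, because it is not a pathwise statement: both $\Phi$ and $S$ also move through their continuous martingale parts, so a charge of $dK^c$ on the contact set does not pathwise push $\Phi$ below $S$. The assertion concerns the finite-variation parts only and needs a probabilistic proof. To make it rigorous one would essentially have to redo Proposition \ref{prop2.3} for the pair $(\Phi,S)$: localize by El Karoui's stopping times $D^\lambda_t$ (on $[t,D^\lambda_t)$ the reflecting process $C$ of $S$ is inactive, so $S$ is there a local martingale), note that $W=\Phi-S$ is then a nonnegative supermartingale which is absorbed at $0$, deduce that its decreasing part --- hence $dK^c$ --- cannot charge $[t,D^\lambda_t)$, and finally run the covering argument of Lemma \ref{lm2.1} over rational $t,\lambda$. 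None of this is in your proposal, and it is the crux of the whole proposition.

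It is instructive to see how the paper sidesteps this difficulty entirely, exploiting the asymmetry that the transfer of minimality in the \emph{opposite} direction is trivial. Let $(\bar Y,\bar Z,\bar K)$ be a solution of RBSDE$(\xi,f_{Y,Z}+dV,L)$ with the frozen generator $f_{Y,Z}(t)=f(t,Y_t,Z_t)$ (it exists by the Snell/Mertens theory already developed, thanks to $\int_0^T|f_{Y,Z}(s)|\,ds\in L^1$). Since $f_{Y,Z}\ge g$, the process $\bar Y+X$ is a supermartingale dominating $L+X$ with terminal value $\xi$, so $\bar Y\ge\mathrm{Snell}_\xi(L+X)-X=\hat L$, and then, because $\hat L\ge L$,
\[
0\le\int_0^T(\bar Y_{t-}-\underline{\hat L}_t)\,d\bar K^*_t
+\sum_{t<T}(\bar Y_t-\hat L_t)\Delta^+\bar K_t
\le\int_0^T(\bar Y_{t-}-\underline{L}_t)\,d\bar K^*_t
+\sum_{t<T}(\bar Y_t-L_t)\Delta^+\bar K_t=0,
\]
so $(\bar Y,\bar Z,\bar K)$ also solves RBSDE$(\xi,f_{Y,Z}+dV,\hat L)$. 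Uniqueness for the $\hat L$-equation with frozen generator (Proposition \ref{prop2.2} together with Remark \ref{uw2.0}) then yields $(\bar Y,\bar Z,\bar K)=(Y,Z,K)$, which is precisely the assertion. In short: where your route must prove the hard ``inactivity transfer'' for $dK^c$, the paper's route replaces it by a comparison/uniqueness argument plus the easy direction of the minimality transfer; if you wish to keep your direct approach, you must supply the Proposition \ref{prop2.3}-type argument sketched above.
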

\begin{proof}
Let $(\bar{Y},\bar{Z},\bar{K})$ be a solution of
RBSDE$(\xi,f_{Y,Z}+dV,L)$. Then $\bar{Y}+X$ is a supermartingale
such that $\bar{Y}_T+X_T=\xi$ and $\bar{Y}_t+X_t\ge L_t+X_t,\,
t\in [0,T)$. Thus $\bar{Y}_t+X_t\ge \mbox{Snell}_\xi(L+X)_t,\,
t\in [0,T]$, and hence $\bar{Y}_t\ge
\mbox{Snell}_\xi(L+X)_t-X_t=\hat{L}_t,\, t\in [0,T]$. Moreover,
\[
\int_0^T(\bar{Y}_{t-}-\underline{\hat{L}}_t)\,d\bar{K}^*_t
+\sum_{t<T}(\bar{Y}_t-\hat{L}_t)\Delta^+\bar{K}_t\le
\int_0^T(\bar{Y}_{t-}-\underline{L}_t)\,d\bar{K}^*_t
+\sum_{t<T}(\bar{Y}_t-L_t)\Delta^+\bar{K}_t=0.
\]
Therefore  $(\bar{Y},\bar{Z},\bar{K})$ is also a solution of
RBSDE$(\xi,f_{Y,Z}+dV,\hat{L})$. By uniqueness (see Remark
\ref{uw2.0}), $(\bar{Y},\bar{Z},\bar{K})=(Y,Z,K)$. Therefore
$(Y,Z,K)$ is a solution of RBSDE$(\xi,f_{Y,Z}+dV,L)$ or,
equivalently, $(Y,Z,K)$ is a solution of RBSDE$(\xi,f+dV,L)$.
\end{proof}

\begin{lemma}
\label{lm2.2} Let $L$ be a regulated process such that
$\Delta^{-}(L+V)_t\le 0$ for $t\in (0,T]$,  and let
$(\bar{Y},\bar{Z},\bar{K})$ be a solution of
\textnormal{RBSDE}$(\xi,f+dV_+,L_+)$ such that
$\int^T_0|f_{\bar{Y},\bar{Z}}(s)|\,ds\in L^1$, where $L_+$ denotes
a c\`adl\`ag process defined by $(L_+)_t=L_{t+}$. Then
\[
(Y_+,Z,K_+)=(\bar{Y},\bar{Z},\bar{K}),
\]
where $(Y,Z,K)$ is a solution of \textnormal{RBSDE}$(\xi,f_{\bar{Y},\bar{Z}}+dV,L)$.
\end{lemma}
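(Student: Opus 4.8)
The plan is to reduce everything to uniqueness for the \emph{c\`adl\`ag}-barrier reflected equation with the frozen generator $g:=f_{\bar Y,\bar Z}$. The first observation is that, upon plugging $(\bar Y,\bar Z)$ into the generator, the triple $(\bar Y,\bar Z,\bar K)$ solving RBSDE$(\xi,f+dV_+,L_+)$ is automatically a solution of RBSDE$(\xi,g+dV_+,L_+)$, a c\`adl\`ag-barrier equation whose generator $g$ does not depend on $(y,z)$ and hence trivially satisfies (H1)--(H5) and (Z). So it suffices to prove that $(Y_+,Z,K_+)$ is \emph{also} a solution of RBSDE$(\xi,g+dV_+,L_+)$; the asserted equality will then follow from the uniqueness part of Theorem \ref{tw1.kl6} (cf.\ also Remark \ref{uw2.1}, since $g$ is independent of $z$).

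First I would check condition (d) together with the easy structural items. Rewriting the equation for $(Y,Z,K)$ as $Y_t=C_t-K_t-V_t$ with $C_t:=\xi+\int_t^T g(s)\,ds+K_T+V_T-\int_t^T Z_s\,dB_s$ continuous in $t$ (the Lebesgue integral is continuous, and $\int_0^\cdot Z\,dB$ is a continuous local martingale since $Z\in\mathcal{H}$), and letting $t\downarrow t_0$ yields
\[
Y_{t_0+}=\xi+\int_{t_0}^T g(s)\,ds+(K_T-K_{t_0+})+(V_T-V_{t_0+})-\int_{t_0}^T Z_s\,dB_s,
\]
which is exactly the equation of RBSDE$(\xi,g+dV_+,L_+)$ (recall $K_{T+}=K_T$, $V_{T+}=V_T$). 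As the Brownian filtration is right-continuous, $Y_+$ and $K_+$ are adapted and c\`adl\`ag; from $Y\in\mathcal{S}^p$ (resp.\ class (D)) one gets $Y_+\in\mathcal{S}^p$ (resp.\ class (D)) and $K_+\in\mathcal{V}^+$, giving (a), while (c) is the standing assumption $\int_0^T|g|\,ds\in L^1$. Taking right limits in $Y_t\ge L_t$ gives $(Y_+)_t\ge(L_+)_t$, and at $T$ one has $Y_T=\xi\ge L_T$.

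The hard part will be the minimality condition $\int_0^T(Y_{+,s-}-L_{+,s-})\,d(K_+)_s=0$, and within it the contribution of the right-jump times of $K$. Since $Y,L$ are regulated, $Y_{+,s-}=Y_{s-}$ and $L_{+,s-}=L_{s-}$, and because $\underline{L}_s=L_{s-}$ for regulated $L$, the minimality condition for $(Y,Z,K)$ splits (each term being nonnegative) into
\[
\int_0^T(Y_{s-}-L_{s-})\,dK^c_s=0,\quad \sum_s(Y_{s-}-L_{s-})\Delta^-K_s=0,\quad \sum_{s<T}(Y_s-L_s)\Delta^+K_s=0,
\]
where $K^c$ is the continuous part of $K$. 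As $K_+$ is c\`adl\`ag with continuous part $K^c$ and jumps $\Delta(K_+)_s=\Delta^+K_s+\Delta^-K_s$, the target identity reduces, using the first two relations, to $\sum_s(Y_{s-}-L_{s-})\Delta^+K_s=0$. Fix $s$ with $\Delta^+K_s>0$; the third relation gives $Y_s=L_s$, while $\Delta^-Y_s=-\Delta^-(K+V)_s$ (from $Y=C-K-V$ with $C$ continuous), so
\[
Y_{s-}-L_{s-}=(Y_s-L_s)+\Delta^-K_s+\Delta^-(V+L)_s=\Delta^-K_s+\Delta^-(V+L)_s.
\]
If $\Delta^-K_s>0$, the second relation forces $Y_{s-}=L_{s-}$; if $\Delta^-K_s=0$, then $Y_{s-}-L_{s-}=\Delta^-(L+V)_s\le0$ by the hypothesis, while $Y_{s-}\ge L_{s-}$, so again $Y_{s-}=L_{s-}$. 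In either case $(Y_{s-}-L_{s-})\Delta^+K_s=0$, which gives the remaining sum and hence (b).

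Having verified (a)--(d), $(Y_+,Z,K_+)$ solves RBSDE$(\xi,g+dV_+,L_+)$, so by uniqueness for the c\`adl\`ag-barrier equation with a generator independent of $(y,z)$ it coincides with $(\bar Y,\bar Z,\bar K)$, completing the proof. The genuinely delicate point is the right-jump analysis of the previous paragraph, where both the assumption $\Delta^-(L+V)\le0$ and the jump identity $\Delta^-Y=-\Delta^-(K+V)$ are indispensable; the rest is bookkeeping with right limits.
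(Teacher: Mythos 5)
Your proof is correct and takes essentially the same route as the paper's: both reduce the claim to showing that $(Y_+,Z,K_+)$ solves the c\`adl\`ag-barrier equation RBSDE$(\xi,f_{\bar{Y},\bar{Z}}+dV_+,L_+)$ and then conclude by uniqueness for the frozen (i.e.\ $(y,z)$-independent) generator, with the key step being the identical case analysis on $\Delta^-K_s>0$ versus $\Delta^-K_s=0$ (the latter using $\Delta^-(L+V)_s\le 0$ and the jump identity $\Delta^-Y_s=-\Delta^-(K+V)_s$) to annihilate the jump contributions in the minimality condition. The only difference is that you spell out the bookkeeping the paper leaves implicit: passing to right limits in the equation, the three-way nonnegative splitting of the minimality condition, and the explicit appeal to uniqueness.
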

\begin{proof}
We will show that $(Y_+,Z,K_+)$ is a solution of
RBSDE$(\xi,f_{\bar{Y},\bar{Z}}+dV_+,L_+)$. Since $Y\ge L$, then of
course $Y_+\ge L_+$. Therefore it suffices to show that
\[
LS:=\int_0^T ((Y_{t+})_{-}-(L_{t+})_-)\, dK_{t+}=0.
\]
We have
\[
LS=\int_0^T(Y_{t-}-L_{t-})\,dK_{t+}=\int_0^T(Y_{t}-L_t)\,dK^c_t
+\sum_{0<t\le T}(Y_{t-}-L_{t-})\Delta K_{t+}.
\]
The first term on the right-hand side is equal to zero since
$(Y,Z,K)$ is a solution of RBSDE$(\xi,f_{\bar{Y},\bar{Z}}+dV,L)$.
As for the second term, we will consider two cases. First suppose
that $\Delta K_{t+}>0$ and $\Delta^-{K_t}>0$. Then $Y_{t-}=L_{t-}$
by the definition of a solution of
RBSDE$(\xi,f_{\bar{Y},\bar{Z}}+dV, L)$. Now suppose that $\Delta
K_{t+}>0$ and $\Delta^-{K_t}=0$. Then $\Delta^+K_t>0$.
Consequently, $Y_t=L_t$ by the definition of a solution of
RBSDE$(\xi,f_{\bar{Y},\bar{Z}}+dV,L)$. By the assumptions,
$L_{t-}+V_{t-}\ge L_t+V_t$. Hence
\[
Y_{t-}+V_{t-}\ge L_{t-}+V_{t-}\ge L_t+V_t=Y_t+V_t.
\]
But $Y_{t-}+V_{t-}=Y_t+V_t$, since $\Delta^-K_t=0$. Therefore
$Y_{t-}=L_{t-}$. Thus, in both cases, $Y_{t-}=L_{t-}$. Hence
$\sum_{0<t\le T}(Y_{t-}-L_{t-})\Delta K_{t+}=0$, and the proof is
complete.
\end{proof}

\begin{corollary}
\label{cor2.1} Let $p\ge 1$. Assume that \textnormal{(H1)--(H5)}
are satisfied and there exists a progressively measurable process
$g$ such that $\int_0^T|g(s)|\,ds \in \mathcal{H}^p$ and
$f(r,y,z)\ge g(r)$ for a.e. $r\in [0,T]$. If $p>1$ and
$L^+\in\mathcal{S}^p$  or $p=1$, $L^+$ is of class
\textnormal{(D)} and \textnormal{(Z)} is satisfied, then there
exists a unique solution $(Y,Z,K)$ of
\textnormal{RBSDE}$(\xi,f+dV,L)$. Moreover, $Y\in\mathcal{S}^p$,
$Z\in\mathcal{H}^p$, $K\in\mathcal{S}^p$ if $p>1$, and if $p=1$,
then $Y$ is of class \textnormal{(D)}, $Y\in\mathcal{S}^q$,
$Z\in\mathcal{H}^q$ for $q\in(0,1)$, $K\in\mathcal{V}^+$.
\end{corollary}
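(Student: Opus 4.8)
The plan is to prove the corollary by a two–stage reduction that replaces the merely optional barrier $L$ first by a \emph{regulated} barrier and then by a \emph{càdlàg} one, for which Theorems \ref{tw1.kl5} and \ref{tw1.kl6} already supply existence, uniqueness and the required integrability. Let $(X,\tilde Z)$ be the solution of $\mathrm{BSDE}(0,-g-dV)$ given by Theorems \ref{tw1.k1}--\ref{tw1.k2} (the data satisfy (H1)--(H5), and for $p=1$ also (Z), trivially, since $-g$ does not depend on $(y,z)$), and set
\[
\hat L=\mathrm{Snell}_\xi(L+X)-X .
\]
By Proposition \ref{prop2.4} it then suffices to solve $\mathrm{RBSDE}(\xi,f+dV,\hat L)$: any solution of the latter with $\int_0^T|f_{Y,Z}|\,ds\in L^1$ is automatically a solution of $\mathrm{RBSDE}(\xi,f+dV,L)$.

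First I would record the two structural properties of $\hat L$ that unlock Lemma \ref{lm2.2}. The process $S:=\mathrm{Snell}_\xi(L+X)$ is a strong supermartingale, hence regulated, so $\hat L=S-X$ is regulated. In the Mertens decomposition $S=S_0+M-A$ (with $M$ a local martingale, $A$ nondecreasing) the martingale $M$ is continuous, because the filtration is Brownian; therefore $\Delta^- S_t=-\Delta^- A_t\le 0$. Since $X$ solves a BSDE whose only left jumps come from $dV$, one has $\Delta^- X_t=\Delta^- V_t$, whence
\[
\Delta^-(\hat L+V)_t=\Delta^- S_t\le 0,\qquad t\in(0,T],
\]
so $\hat L$ meets the hypotheses of Lemma \ref{lm2.2}.

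Next I would solve the càdlàg problem and transport the solution back. Applying Theorem \ref{tw1.kl5} (for $p>1$) or Theorem \ref{tw1.kl6} (for $p=1$, via (Z)) to the càdlàg barrier $\hat L_+$ — the domination hypothesis (H6), resp. (H6*), being verified from $L^+\in\mathcal{S}^p$, resp. $L^+$ of class (D), the lower bound $f\ge g$, and $V\in\mathcal{V}^p$ — yields the unique solution $(\bar Y,\bar Z,\bar K)$ of $\mathrm{RBSDE}(\xi,f+dV_+,\hat L_+)$, with $\bar Z\in\mathcal{H}^p$, $E\int_0^T|f_{\bar Y,\bar Z}(s)|\,ds<\infty$ and the stated norm bounds. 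Lemma \ref{lm2.2} then produces $(Y,Z,K)$ solving $\mathrm{RBSDE}(\xi,f_{\bar Y,\bar Z}+dV,\hat L)$ with $(Y_+,Z,K_+)=(\bar Y,\bar Z,\bar K)$. The crucial closing step is to unfreeze the generator: $Y$ is regulated, hence $Y_s=Y_{s+}=\bar Y_s$ and $Z_s=\bar Z_s$ for all but countably many $s$, so $f_{\bar Y,\bar Z}(s)=f(s,\bar Y_s,\bar Z_s)=f(s,Y_s,Z_s)=f_{Y,Z}(s)$ for a.e. $s$. Consequently $(Y,Z,K)$ solves $\mathrm{RBSDE}(\xi,f+dV,\hat L)$, and by Proposition \ref{prop2.4} it solves $\mathrm{RBSDE}(\xi,f+dV,L)$. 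Here $Z\in\mathcal{H}^p$ is immediate from $Z=\bar Z$, while $Y\in\mathcal{S}^p$ (resp. class (D) and $Y\in\mathcal{S}^q$) and the stated membership of $K$ follow by transferring the càdlàg estimates through the relation $Y_t=(Y_{t+}+\Delta^+V_t)\vee L_t$ together with $L^+\in\mathcal{S}^p$ and $V\in\mathcal{V}^p$.

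For uniqueness I would show every solution factors through the unique càdlàg one. If $(Y,Z,K)$ solves $\mathrm{RBSDE}(\xi,f+dV,L)$ with $\int_0^T|f_{Y,Z}|\,ds\in L^1$, then $Y+X$ is a supermartingale (because $f\ge g$) with terminal value $\xi$ dominating $L+X$, so $Y+X\ge S$ and $Y\ge\hat L$; since moreover $\hat L\ge L$, the minimality condition for $L$ forces the nonnegative minimality expression for $\hat L$ to vanish, so $(Y,Z,K)$ also solves $\mathrm{RBSDE}(\xi,f+dV,\hat L)$. The same computation as in the proof of Lemma \ref{lm2.2} (using $\Delta^-(\hat L+V)\le 0$) shows $(Y_+,Z,K_+)$ solves the càdlàg problem $\mathrm{RBSDE}(\xi,f+dV_+,\hat L_+)$, which is unique; finally the minimality condition recovers $Y$ and the right jumps of $K$ from $(Y_+,K_+)$ via $Y_t=(Y_{t+}+\Delta^+V_t)\vee L_t$ (at right-jump times of $K$ one has $Y_t=L_t$, and $Y_t=Y_{t+}+\Delta^+V_t$ otherwise). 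This pins down $(Y,Z,K)$; alternatively uniqueness follows from the comparison Proposition \ref{prop2.2} with Remarks \ref{uw2.1} and \ref{uw2.0}. The main obstacle is the second paragraph — establishing that $\hat L$ is regulated with $\Delta^-(\hat L+V)\le 0$, i.e. exploiting that the Brownian filtration makes the martingale part of the Mertens decomposition continuous so that the Snell envelope has only nonpositive left jumps — and then closing the frozen-coefficient fixed point; the case $p=1$ additionally needs the control afforded by (Z) to secure both the comparison and the $\mathcal{S}^q$-integrability.
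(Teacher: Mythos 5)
Your proposal follows exactly the paper's route: define $X$ and $\hat L=\mathrm{Snell}_\xi(L+X)-X$ as in Proposition \ref{prop2.4}, solve the c\`adl\`ag problem \textnormal{RBSDE}$(\xi,f+dV_+,\hat L_+)$ via Theorems \ref{tw1.kl5}--\ref{tw1.kl6}, transfer back to $\hat L$ by Lemma \ref{lm2.2} and then to $L$ by Proposition \ref{prop2.4}, with uniqueness from Proposition \ref{prop2.2} and Remark \ref{uw2.0}. The additional verifications you supply (regularity of $\hat L$, $\Delta^-(\hat L+V)\le 0$ via continuity of the martingale part of the Mertens decomposition in the Brownian filtration, and the unfreezing of the generator using that $Y_s=\bar Y_s$ off a countable set) are precisely the steps the paper leaves implicit, and they are correct.
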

\begin{proof}
Define $X,\hat L$ as in Proposition \ref{prop2.4}. By Theorem
\ref{tw1.kl5} and Theorem \ref{tw1.kl6} there exists a solution of
$(\bar{Y},\bar{Z},\bar{K})$ of RBSDE$(\xi,f+dV_+,\hat{L}_+)$. By
Lemma \ref{lm2.2},
\[
(\bar{Y},\bar{Z},\bar{K})=(Y_+,Z,K_+),
\]
where $(Y,Z,K)$ is a solution of
RBSDE$(\xi,f_{\bar{Y},\bar{Z}}+dV,\hat{L})$.  Hence $(Y,Z,K)$ is a
solution of RBSDE$(\xi,f+dV,\hat{L})$, and by Proposition
\ref{prop2.4}, it  is a solution of RBSDE$(\xi,f+dV,L)$.
Uniqueness follows from  Proposition \ref{prop2.2} and Remark
\ref{uw2.0}.
\end{proof}

\begin{corollary}
\label{cor2.2} Under the assumptions of Corollary \ref{cor2.1},
\[
Y^n_t\nearrow Y_{t+},\quad t\in [0,T),
\]
where $(Y,Z,K)$ is the solution of {\rm RBSDE}$(\xi,f+dV,L)$ and
$(Y^n,Z^n)$ is the solution of the \textnormal{BSDE}
\[
Y^n_t=\xi+\int_t^Tf(s,Y^n_s,Z^n_s)\,ds
+\int_t^Tn(Y^n_s-\hat{L}_s)^-\,ds+\int_t^T\,dV_s-\int_t^TZ^n_s\,dB_s,
\quad t\in [0,T]
\]
with $\hat L$ defined in Proposition \ref{prop2.4}.
\end{corollary}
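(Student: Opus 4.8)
The plan is to read the penalized equation as the usual \emph{continuous} penalization scheme for the \emph{c\`adl\`ag} reflected equation RBSDE$(\xi,f+dV_+,\hat L_+)$, and then to pass to the limit by the convergence machinery of Theorem \ref{kl4.12}. The c\`adl\`ag data enter because the penalty $n\int_t^T(Y^n_s-\hat L_s)^-\,ds$ is a Lebesgue integral, hence unchanged if $\hat L$ is replaced by its right-continuous version $\hat L_+$ (they differ only on the at most countable set of discontinuities of $\hat L$); passing from $Y^n$ to its right-continuous modification $\bar Y^n:=(Y^n)_+$ likewise turns $dV$ into $dV_+$ in the $ds$- and $dB$-integrals, by the mechanism of Lemma \ref{lm2.2}. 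Thus $(\bar Y^n,Z^n)$ solves the standard penalized BSDE attached to the c\`adl\`ag triple $(\xi,f+dV_+,\hat L_+)$, whose reflected solution is, by the proof of Corollary \ref{cor2.1}, exactly $\bar Y=Y_+$.

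First I would record monotonicity and an a priori bound. Since the penalized generators $f(s,y,z)+n(y-\hat L_s)^-$ are nondecreasing in $n$, the comparison principle (Proposition \ref{prop2.2} with empty stopping-time families, together with Remark \ref{uw2.0}) gives $Y^n\le Y^{n+1}$; comparing $\bar Y^n$ with the reflected solution $Y_+$, whose reflecting term dominates every finite penalty, yields $\bar Y^n\le Y_+$. Hence $\bar Y^n_t\nearrow\tilde Y_t\le Y_{t+}$ for all $t$, and the $\mathcal S^p/\mathcal H^p$ estimates of Corollary \ref{cor2.1} furnish uniform bounds on $\{\bar Y^n\}$, $\{Z^n\}$ and on $K^n_t:=n\int_0^t(Y^n_s-\hat L_s)^-\,ds$.

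Next I would invoke Theorem \ref{kl4.12} for the sequence $\{\bar Y^n\}$ with $g_n\equiv 0$, $A^n\equiv-V_+$ (constant in $n$) and $K^n$ as above. Its hypotheses are straightforward: (a) is immediate since $A^n$ is independent of $n$; the $\liminf$ condition (b) reduces, because $V_+$ is c\`adl\`ag, to $\int_\sigma^\tau(\tilde Y_s-\bar Y^n_s)\,dK^n_s\ge0$ (true as $\bar Y^n\le\tilde Y$ and $dK^n\ge0$) modulo $V_+$-terms that vanish as $\bar Y^n\to\tilde Y$; (c) holds with $C\equiv0$ since each $K^n$ is continuous; (d) follows from Step~1 and $\hat L_+\le\bar Y^n\le Y_+$; and (e),(f) are clear. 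The theorem then produces $Z\in\mathcal H$ and $K\in\mathcal V^+$ with $Z^n\to Z$ $\lambda\otimes P$-a.e.\ such that $\tilde Y_t=\xi+\int_t^Tf(s,\tilde Y_s,Z_s)\,ds+\int_t^T dK_s+\int_t^T dV_{+,s}-\int_t^T Z_s\,dB_s$ and $\tilde Y\ge\hat L_+$.

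Finally I would verify the minimality condition for the barrier $\hat L_+$ and conclude by uniqueness. The key is to show that the limiting measure $dK$ charges only the contact set $\{\tilde Y=\hat L_+\}$, i.e.\ $\int_0^T(\tilde Y_{s-}-(\hat L_+)_{s-})\,dK_s=0$: on $\{\tilde Y_s>(\hat L_+)_s\}$ the penalties $n(Y^n_s-\hat L_s)^-$ are asymptotically negligible, so no mass of $dK$ survives there in the limit. Here the fact that $\hat L_+$, being obtained from a Snell envelope, has only \emph{downward} jumps (the mechanism used in Proposition \ref{prop2.3}) is essential, since it is what makes the purely continuous penalization adequate and forces the limit onto the right-continuous object. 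Consequently $(\tilde Y,Z,K)$ solves the c\`adl\`ag RBSDE$(\xi,f+dV_+,\hat L_+)$, and by uniqueness (Theorems \ref{tw1.kl5} and \ref{tw1.kl6}) together with the proof of Corollary \ref{cor2.1} we get $\tilde Y=\bar Y=Y_+$, that is $Y^n_t\nearrow Y_{t+}$, $t\in[0,T)$. The hardest part is exactly this last step — extracting the Skorokhod identity for the limit $K$ and thereby identifying the limit as the right-continuous version $Y_+$ (rather than $Y$) — which is precisely where the special construction of $\hat L$ through the Snell envelope is used.
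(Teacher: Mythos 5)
Your overall route --- rewrite the penalized equation, via right limits, as the usual continuous penalization for the c\`adl\`ag equation RBSDE$(\xi,f+dV_+,\hat L_+)$, and then identify the limit with $\bar Y=Y_+$ through the proof of Corollary \ref{cor2.1} --- is exactly the argument the paper has in mind; the corollary is stated without proof because, after this reduction, the convergence of the standard penalization scheme for c\`adl\`ag data is already contained in \cite{kl} (it underlies Theorems \ref{tw1.kl5} and \ref{tw1.kl6}). Had you invoked that known result, the proof would be essentially complete, up to one bookkeeping point you gloss over: from the equation, $\Delta^+Y^n_t=-\Delta^+V_t$, so $Y^n_t=(Y^n)_{t+}+\Delta^+V_t$, and what the reduction literally gives is $(Y^n)_{t+}\nearrow Y_{t+}$; this coincides with the asserted convergence of $Y^n_t$ itself only at points where $\Delta^+V_t=0$.

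Re-deriving the c\`adl\`ag penalization convergence through Theorem \ref{kl4.12}, as you do, opens two genuine gaps. (i) The inequality $\hat L_+\le \bar Y^n$ is false in general: penalized solutions do not dominate the barrier (if they did, the penalty term would vanish identically and $\bar Y^n$ would be the plain BSDE solution, independent of $n$); compare Step 1 of the proof of the theorem in Section \ref{sec4}, where $(Y^n,Z^n,K^n)$ is exhibited as the solution of the reflected equation with the \emph{lowered} barrier $L^n=L-(Y^n-L)^-$. So hypothesis (d) of Theorem \ref{kl4.12} must be checked with different bounding processes (e.g. $\bar Y^1\le\bar Y^n\le\bar Y$, with the integrability argument of Section \ref{sec4}), and, more importantly, Theorem \ref{kl4.12} does not ``produce'' $\tilde Y\ge\hat L_+$: barrier domination of the limit needs the separate argument from $\sup_nEK^n_T<\infty$, which forces $(Y^n-\hat L)^-\to0$ a.e. (ii) The Skorokhod condition for the limit $K$ is precisely the hard step, and your justification (``no mass of $dK$ survives on $\{\tilde Y>\hat L_+\}$ because the penalties are asymptotically negligible there'') is the standard heuristic, not a proof: here $dK^n$ is not monotone in $n$ and converges only weakly, and weak convergence of measures does not localize supports. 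A complete argument either cites the penalization theorem for c\`adl\`ag barriers from \cite{kl}, or runs the optimal-stopping machinery the paper uses in Section \ref{sec4}: Proposition \ref{prop2.5} applied to the barriers $L^n$, Lemma \ref{lem2} to pass to the limit in the stopping problems, and Corollary \ref{wn2.1.0} to extract the minimality of $K$.
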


\begin{lemma}
\label{lm2.3} Let $p\ge1$. Assume that {\rm (H1)--(H5)} are
satisfied if $p>1$, and  {\rm (H1)--(H5), (Z)} are satisfied if
$p=1$. Let $(Y^1,Z^1,K^1)$, $(Y^2,Z^2,K^2)$ be solutions of
\textnormal{RBSDE}$(\xi^1,f^1+dV^1,L)$ and
\textnormal{RBSDE}$(\xi^2,f^2+dV^2,L)$, respectively. Assume that
$\xi^1\le \xi^2,\, f^1\le f^2$, $dV^1\le dV^2$ and there exists a
progressively measurable process $g$ such that $\int^T_0
|g(s)|\,ds\in L^1$ and $f^1(r,y)\wedge f^2(r,y)\ge g(r)$ for a.e.
$r\in [0,T]$. Then $ Y^1_t\le Y^2_t$, $t\in [0,T] $, and $dK^1\ge
dK^2$.
\end{lemma}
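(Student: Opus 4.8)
The plan is to prove the two assertions in turn: first the pathwise comparison $Y^1\le Y^2$ by a Tanaka-type energy estimate patterned on the proof of Proposition \ref{prop2.2}, and then the reverse ordering $dK^1\ge dK^2$ of the reflecting measures by combining this comparison with the minimality conditions. For the first part I set $\delta Y=Y^1-Y^2$, $\delta Z=Z^1-Z^2$, $\delta K=K^1-K^2$, $\delta V=V^1-V^2$, observe that $(\delta Y_T)^+=(\xi^1-\xi^2)^+=0$, and (when $p=1$, after first checking $(\delta Y)^+\in\mathcal{S}^{p}$ for some $p>1$ by the argument of Remark \ref{uw2.0}, which uses (Z)) reduce via Remark \ref{rem1} to the case where $\mu$ is as negative as required. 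I then apply Corollary \ref{cor4} to $((\delta Y)^+)^p$ between two stopping times $\tau\le\sigma$. The generator is handled exactly as in Proposition \ref{prop2.2}: writing $f^1(s,Y^1,Z^1)-f^2(s,Y^2,Z^2)$ as the $y$-increment of $f^1$, plus its $z$-increment, plus $f^1(s,Y^2,Z^2)-f^2(s,Y^2,Z^2)\le0$, and using (H2), (H1) and the choice of $\mu$, the entire drift is absorbed into the coercive term $\frac{p(p-1)}{4}\int_\tau^\sigma((\delta Y_s)^+)^{p-2}\mathbf{1}_{\{Y^1_s>Y^2_s\}}|\delta Z_s|^2\,ds$, leaving no $((\delta Y)^+)^p$ contribution.

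The decisive new point is the sign of the reflection term $p\int_\tau^\sigma((\delta Y_{s-})^+)^{p-1}\,d\delta K^*_s+p\sum_{\tau\le s<\sigma}((\delta Y_s)^+)^{p-1}\Delta^+\delta K_s$. Since every solution satisfies $Y_s\ge L_s$, and hence $Y_{s-}\ge\underline{L}_s$, on the set $\{Y^1_{s-}>Y^2_{s-}\}$ one has $Y^1_{s-}>\underline{L}_s$, so the minimality condition for $K^1$ shows that $dK^{1,*}$ charges no mass there; likewise $\Delta^+K^1$ vanishes on $\{Y^1_s>L_s\}\supseteq\{Y^1_s>Y^2_s\}$. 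Thus the $K^1$-part of the reflection term is zero, while the $-K^2$-part is $\le0$ because $K^2$ is increasing, and the $\delta V$-terms are $\le0$ because $dV^1\le dV^2$. This gives $((\delta Y_\tau)^+)^p+\frac{p(p-1)}{4}\int_\tau^\sigma(\cdots)|\delta Z|^2\,ds\le((\delta Y_\sigma)^+)^p+M_\sigma-M_\tau$ with $M$ a local martingale; localizing, taking expectations and letting $\sigma\uparrow T$ yields $E((\delta Y_\tau)^+)^p\le0$ for every $\tau\in\Gamma$, whence $Y^1\le Y^2$ by the Section Theorem.

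For $dK^1\ge dK^2$ I work with $W=Y^2-Y^1\ge0$, whose forward dynamics are $dW=(Z^2-Z^1)\,dB-dA$ with $A=\int_0^\cdot(f^2-f^1)\,ds+(V^2-V^1)+(K^2-K^1)$, and I treat the jumps and the continuous part separately. At any $s$ with $\Delta^+K^2_s>0$ minimality forces $Y^2_s=L_s$, so $W_s=0$; at any $s$ with $\Delta^-K^2_s>0$ it forces $Y^2_{s-}=\underline{L}_s$, so $W_{s-}=0$. Since $W\ge0$ this gives $\Delta^+W_s\ge0$ and $\Delta^-W_s\ge0$, and the identities $\Delta^+Y^i_s=-(\Delta^+K^i_s+\Delta^+V^i_s)$ and $\Delta^-Y^i_s=-(\Delta^-K^i_s+\Delta^-V^i_s)$, together with $\Delta^\pm V^1\le\Delta^\pm V^2$, give $\Delta^\pm K^1_s\ge\Delta^\pm K^2_s$; off the support of $\Delta^\pm K^2$ the inequality is trivial. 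For the continuous part, minimality of $K^2$ and $Y^1\le Y^2$ force the continuous measure $dK^{2,c}$ to be carried by $\{W=0\}$; the occupation-times formula gives $\mathbf{1}_{\{W=0\}}(Z^2-Z^1)=0$ $\lambda\otimes P$-a.e., so the Tanaka--Meyer formula for the nonnegative $W$ reads $\frac12\,dL^0=\mathbf{1}_{\{W=0\}}\,dW^c=-\mathbf{1}_{\{W=0\}}\,dA^c$, that is $\mathbf{1}_{\{W=0\}}\,d(K^2-K^1)^c=-\frac12\,dL^0-\mathbf{1}_{\{W=0\}}(f^2-f^1)\,ds-\mathbf{1}_{\{W=0\}}\,d(V^2-V^1)^c\le0$, where $L^0$ is the local time of $W$ at $0$. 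Since $dK^{2,c}$ lives on $\{W=0\}$ this yields $dK^{2,c}\le dK^{1,c}$, and combined with the jump inequalities, $dK^1\ge dK^2$.

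I expect the continuous-part comparison to be the main obstacle. It is the only place that needs the local-time/Tanaka calculus, and here $W$ is merely a regulated (optional) semimartingale, so one must justify the Tanaka--Meyer decomposition and the occupation-times identity in that generality, carefully separate the continuous local time $L^0$ from the left-jump atoms of $K^{*}$, and read the final relation as an inequality of measures on $\{W=0\}$ rather than only in expectation. Once the template of Proposition \ref{prop2.2} is in place, the remaining sign bookkeeping for the generator, the $\delta V$-terms and the reflection terms is routine.
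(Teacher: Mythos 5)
Your proof is correct, but it takes a genuinely different route from the paper's, which disposes of the lemma in a few lines by leaning on machinery already built: for $Y^1\le Y^2$ the paper simply invokes Remark \ref{uw2.1} (i.e.\ the comparison argument of Proposition \ref{prop2.2}), and for $dK^1\ge dK^2$ it first obtains the ordering of the right-continuous parts, $dK^1_+\ge dK^2_+$ --- hence of the continuous parts --- from Lemma \ref{lm2.2}, Proposition \ref{prop2.4} and the c\`adl\`ag-barrier comparison results of \cite{kl}, and then compares jumps through the explicit formulas $\Delta^+K^i_t=(\hat{L}_t-Y^i_{t+}-\Delta^+V^i_t)^+$, $\Delta^-K^i_t=(\hat{L}_{t-}-Y^i_t-\Delta^-V^i_t)^+$ expressed in terms of the reduced barrier $\hat{L}$. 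Your first part is essentially the argument the paper is citing, carried out explicitly at the level of RBSDE solutions: the observation that minimality kills the $K^{1}$-terms on $\{Y^1_{s-}>Y^2_{s-}\}$ (for $dK^{1,*}$) and on $\{Y^1_s>Y^2_s\}$ (for $\Delta^+K^1$) is exactly right, as is the use of the Remark \ref{uw2.0} argument to secure $(\delta Y)^+\in\mathcal{S}^{p}$ when $p=1$. The second part is where you diverge: your jump comparison extracts the same information as the paper's explicit formulas directly from minimality and $W=Y^2-Y^1\ge0$ (and so applies to an arbitrary solution, not only the one produced by the reduction, though uniqueness makes that distinction moot), while your local-time argument for the continuous parts replaces the paper's appeal to the c\`adl\`ag theory. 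What your route buys is self-containedness --- no Lemma \ref{lm2.2}, Proposition \ref{prop2.4} or \cite{kl} --- at the price of the Tanaka--Meyer and occupation-times identities for the regulated semimartingale $W$; these do hold, and the clean way to get them is the device already used in the Appendix (proof of Theorem \ref{thm1}): apply the classical c\`adl\`ag formulas to $(W_{t+})_{t\le T}$, whose continuous martingale part is still $\int(Z^2-Z^1)\,dB$, and then interchange the indicators $\mathbf{1}_{\{W_{s+}=0\}}$, $\mathbf{1}_{\{W_{s-}=0\}}$, $\mathbf{1}_{\{W_{s}=0\}}$ freely, since $ds$, $dL^0$ and $dK^{i,c}$ are non-atomic while $W$ has at most countably many discontinuities. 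Two points you should make explicit in a write-up: first, minimality puts $dK^{2,c}$ on $\{W_{s-}=0\}$, and it is only this non-atomicity argument that transfers it to $\{W_s=0\}$; second, in your final display the drift term has the right sign because the occupation identity gives not only $Z^1=Z^2$ on $\{W=0\}$ (killing the stochastic integral) but also, combined with $Y^1=Y^2$ there, $f^2(s,Y^2_s,Z^2_s)-f^1(s,Y^1_s,Z^1_s)=(f^2-f^1)(s,Y^1_s,Z^1_s)\ge0$.
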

\begin{proof}
By Remark \ref{uw2.1}, $Y^1\le Y^2$. By Lemma \ref{lm2.2},
Proposition \ref{prop2.4} and \cite{kl}, $dK^1_+\ge dK^2_+$. Hence
$dK^{1,c}\ge dK^{2,c}$. Moreover,
\[
\Delta^+K^1_t=(\hat{L}_{t}-Y^1_{t+}-\Delta^+V^1_t)^+
\ge (\hat{L}_t-Y^2_{t+}-\Delta^+V^2_t)=\Delta^+K^2_t,
\]
\[
\Delta^-K^1_t=(\hat{L}_{t-}-Y^1_{t}-\Delta^-V^1_t)^+
\ge (\hat{L}_{t-}-Y^2_{t}-\Delta^- V^2_t)^+=\Delta^-K^2_t.
\]
\end{proof}

\begin{lemma}\label{lem2}
Assume that $E\int^T_{0}|f_n(s)-f(s)|\,ds\rightarrow 0$,
$E|\xi^n-\xi|\rightarrow 0$, $\| L-L^n\|_D\rightarrow 0$. Let
\[
Y^n_t=\esssup_{\tau\ge t}E(\int_{t}^\tau f_n(s)\,ds
+L^n_{\tau}\mathbf{1}_{\{\tau<T\}}+\xi_n\mathbf{1}_{\{\tau=T\}}|\mathcal{F}_t).
\]
Then $\|Y^n-Y\|_D\rightarrow 0$, where
\[
Y_t=\esssup_{\tau\ge t}E(\int_{t}^\tau f(s)\,ds
+L_{\tau}\mathbf{1}_{\{\tau<T\}}+\xi\mathbf{1}_{\{\tau=T\}}|\mathcal{F}_t).
\]
\end{lemma}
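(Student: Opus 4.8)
The plan is to estimate $\|Y^n-Y\|_D=\sup_{\sigma\in\Gamma}E|Y^n_\sigma-Y_\sigma|$ directly from the definitions, using that $Y^n$ and $Y$ are value functions of optimal stopping problems that differ only in their data. For a fixed $\sigma\in\Gamma$ I would first invoke the standard representation of the value process at a stopping time (optional sampling for the Snell envelope recalled in the Preliminaries), writing $Y^n_\sigma=\esssup_{\tau\ge\sigma}R^n_\sigma(\tau)$ and $Y_\sigma=\esssup_{\tau\ge\sigma}R_\sigma(\tau)$, where $R^n_\sigma(\tau),R_\sigma(\tau)$ are the conditional expectations under the $\esssup$ in the respective definitions. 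For each $\tau\ge\sigma$ the two rewards differ by at most
\[
|R^n_\sigma(\tau)-R_\sigma(\tau)|\le E\Big(\int_\sigma^\tau|f_n(s)-f(s)|\,ds+|L^n_\tau-L_\tau|\mathbf{1}_{\{\tau<T\}}+|\xi_n-\xi|\mathbf{1}_{\{\tau=T\}}\,\Big|\,\mathcal{F}_\sigma\Big).
\]
Using the elementary inequality $|\esssup_\tau a_\tau-\esssup_\tau b_\tau|\le\esssup_\tau|a_\tau-b_\tau|$ for families indexed by the same set, this gives the pathwise bound
\[
|Y^n_\sigma-Y_\sigma|\le\esssup_{\tau\ge\sigma}E(G^n_\tau\,|\,\mathcal{F}_\sigma),\qquad G^n_\tau:=\int_\sigma^\tau|f_n-f|\,ds+|L^n_\tau-L_\tau|\mathbf{1}_{\{\tau<T\}}+|\xi_n-\xi|\mathbf{1}_{\{\tau=T\}}.
\]

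Next I would split the right-hand side into its three contributions by subadditivity of $\esssup$. The $f$-term is dominated by $\Phi_n:=\int_0^T|f_n-f|\,ds$ and the $\xi$-term by $|\xi_n-\xi|$, both of which are free of $\tau$; hence the corresponding $\esssup_\tau E(\,\cdot\,|\mathcal{F}_\sigma)$ collapse to $E(\Phi_n\,|\,\mathcal{F}_\sigma)$ and $E(|\xi_n-\xi|\,|\,\mathcal{F}_\sigma)$, whose expectations equal $E\Phi_n$ and $E|\xi_n-\xi|$ for \emph{every} $\sigma$. The only genuinely $\tau$-dependent contribution is $W^n_\sigma:=\esssup_{\tau\ge\sigma}E(|L^n_\tau-L_\tau|\,|\,\mathcal{F}_\sigma)$, namely the Snell envelope of the optional process $|L^n-L|$ evaluated at $\sigma$.

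The crux is to show $E[W^n_\sigma]\le\|L-L^n\|_D$ uniformly in $\sigma$. For this I would use that the family $\{E(|L^n_\tau-L_\tau|\,|\,\mathcal{F}_\sigma):\tau\ge\sigma\}$ is upward directed: given $\tau_1,\tau_2\ge\sigma$ one pastes them along the $\mathcal{F}_\sigma$-measurable set on which the first conditional reward dominates, producing a $\tau_3\ge\sigma$ whose conditional reward is the pointwise maximum. Consequently there is a sequence $\{\tau_k\}$ with $E(|L^n_{\tau_k}-L_{\tau_k}|\,|\,\mathcal{F}_\sigma)\nearrow W^n_\sigma$, and monotone convergence yields $E[W^n_\sigma]=\lim_k E|L^n_{\tau_k}-L_{\tau_k}|\le\|L-L^n\|_D$. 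Combining the three estimates and taking the supremum over $\sigma\in\Gamma$ gives
\[
\|Y^n-Y\|_D\le E\int_0^T|f_n(s)-f(s)|\,ds+E|\xi_n-\xi|+\|L-L^n\|_D,
\]
and all three terms tend to $0$ by hypothesis. I expect the step requiring the most care to be this uniform bound $E[W^n_\sigma]\le\|L-L^n\|_D$: one must justify the upward-directedness and the monotone passage to the limit, and confirm that conditioning on the general $\mathcal{F}_\sigma$ (rather than on a fixed $\mathcal{F}_t$) is legitimate, which is exactly where optional sampling for the Snell envelope enters.
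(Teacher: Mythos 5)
Your proof is correct and follows essentially the same route as the paper: bound $|Y^n_\sigma-Y_\sigma|$ by $\esssup_{\tau\ge\sigma}$ of the conditional expectation of the difference of rewards, then pass from the expectation of this $\esssup$ to the supremum of expectations, yielding the bound $E\int_0^T|f_n-f|\,ds+E|\xi_n-\xi|+\|L-L^n\|_D$ uniformly in $\sigma$. The only difference is presentational: the paper performs the exchange $E\,\esssup_\tau=\sup_\tau E$ for the combined reward in one line, while you split off the $f$- and $\xi$-terms first and justify the exchange for the $L$-term explicitly via upward-directedness and monotone convergence, which is exactly the argument the paper leaves implicit.
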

\begin{proof}
For every $\sigma\in\Gamma$ we have
\begin{align*}
&E|Y_{\sigma}-Y^n_{\sigma}|\quad\\
&\quad \le E \esssup_{\tau\ge\sigma}E(|\int_{\sigma}^\tau
f(s)-f_n(s)\,ds
+(L_{\tau}-L^n_{\tau})\mathbf{1}_{\{\tau<T\}}+(\xi-\xi^n)
\mathbf{1}_{\{\tau=T\}}|\mathcal{F}_{\sigma})\\
&\quad=\sup_{\tau\ge\sigma}E \big(E(|\int_{\sigma}^\tau f(s)-f_n(s)\,ds
+(L_{\tau}-L^n_{\tau})\mathbf{1}_{\{\tau<T\}}
+(\xi-\xi^n)\mathbf{1}_{\{\tau=T\}}|\mathcal{F}_{\sigma})\big)\\
&\quad\le\sup_{\tau\in\Gamma}E|L_{\tau}-L^n_{\tau}|
+E\int^T_0|f(s)-f_n(s)|\,ds+E|\xi-\xi^n|,
\end{align*}
which converges to 0 as $n\rightarrow$ by the assumptions of the
lemma.
\end{proof}

\begin{theorem}\label{tw2.16}
Let $p\ge1$. Assume that \textnormal{(H1)--(H6)} are satisfied if
$p>1$, and if $p=1$  then \textnormal{(H1)-- (H5), (H6*), (Z)} are
satisfied. Then there exists a unique solution $(Y,Z,K)$ of {\rm
RBSDE($\xi$,$f+dV$,$L$)}. Moreover,  $Y\in\mathcal{S}^p$,
$Z\in\mathcal{H}^p$ and $K\in\mathcal{S}^p$ if $p>0$, and if
$p=1$, then $Y$ is of class \textnormal{(D)}, $Y\in\mathcal{S}^q$,
$Z\in\mathcal{H}^q$ for $q\in(0,1)$ and $K\in\mathcal{V}^+$.
\end{theorem}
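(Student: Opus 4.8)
The plan is to prove existence and uniqueness of solutions to RBSDE with an optional barrier $L$ by reducing the problem to the already-solved case of a c\`adl\`ag barrier. The main structural idea, announced in the introduction, is that the optional barrier problem can be transformed into one whose barrier is c\`adl\`ag, so that Theorems~\ref{tw1.kl5} and~\ref{tw1.kl6} become applicable.

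\emph{Uniqueness} should come first and is the easier half. Given two solutions, the comparison principle of Proposition~\ref{prop2.2} together with Remark~\ref{uw2.0} (for the $p=1$ case, using (Z) to guarantee that $(Y^1-Y^2)^+\in\mathcal{S}^{p'}$ for some $p'>1$) yields $Y^1\le Y^2$ and, by symmetry, $Y^1=Y^2$. Equality of $Z$ then follows from the It\^o/martingale representation applied to $Y^1-Y^2=0$, and equality of $K$ follows from the defining equation (d). So uniqueness is essentially a direct application of the comparison tools developed in the earlier part of Section~3.

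\emph{Existence} is the heart of the matter and where the real work lies. First I would reduce to a convenient form of the generator: by Remark~\ref{rem1}, after an exponential change of variables I may assume (H2) holds with any fixed $\mu\in\R$, which is harmless. The key step is then to invoke Corollary~\ref{cor2.1}, which already produces a solution $(Y,Z,K)$ of RBSDE$(\xi,f+dV,L)$ under hypotheses matching those of the present theorem: for $p>1$ one needs $L^+\in\mathcal{S}^p$, and for $p=1$ one needs $L^+$ of class (D) together with (Z). The chain of reasoning inside Corollary~\ref{cor2.1} is exactly the reduction strategy: one passes to the c\`adl\`ag barrier $\hat{L}_+$ via the Snell-envelope construction $\hat L=\mathrm{Snell}_\xi(L+X)-X$ of Proposition~\ref{prop2.4}, solves the c\`adl\`ag RBSDE for $(\bar Y,\bar Z,\bar K)$ using Theorems~\ref{tw1.kl5}/\ref{tw1.kl6}, and then uses Lemma~\ref{lm2.2} to identify $(\bar Y,\bar Z,\bar K)=(Y_+,Z,K_+)$, recovering a solution for the original optional barrier $L$.

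The main obstacle is verifying that the integrability hypotheses in the present theorem, namely (H6) for $p>1$ and (H6*) for $p=1$, are strong enough to supply the barrier-regularity inputs that Corollary~\ref{cor2.1} requires. Specifically, (H6) provides a dominating process $X\in\mathcal{M}_{loc}+\mathcal{V}^p$ with $X\ge L$ and $E\sup_t|X_t|^p<\infty$; from this I must deduce $L^+\in\mathcal{S}^p$ (and in the $p=1$ case, from (H6*), that $L^+$ is of class (D)), and also exhibit the lower bound $g$ with $f(r,y,z)\ge g(r)$ needed to form $X$ and $\hat L$. I would take $g(r)=-f^-(r,X_r,0)$ so that the integrability of $\int_0^T f^-(s,X_s,0)\,ds$ from (H6)/(H6*) guarantees $g$ lies in the required space. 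Once these verifications are in place, Corollary~\ref{cor2.1} delivers existence together with the stated regularity $Y\in\mathcal{S}^p$, $Z\in\mathcal{H}^p$, $K\in\mathcal{S}^p$ for $p>1$, and the class-(D) and $\mathcal{S}^q,\mathcal{H}^q$ assertions for $p=1$, completing the proof.
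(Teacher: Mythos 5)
Your uniqueness sketch is in the right spirit (comparison via Proposition~\ref{prop2.2} and Remark~\ref{uw2.0}), but the existence argument has a genuine gap at its central step: Corollary~\ref{cor2.1} cannot be invoked directly for $f$ under (H1)--(H6) (resp.\ (H6*)). That corollary requires a progressively measurable process $g$ with $f(r,y,z)\ge g(r)$ for a.e.\ $r$ and \emph{all} $y\in\R$, $z\in\R^d$, i.e.\ a lower bound on the generator uniform in $(y,z)$. The hypotheses of the theorem do not provide such a bound: (H6)/(H6*) only give integrability of $f^-(\cdot,X_\cdot,0)$ evaluated \emph{along the single dominating process} $X$. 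Your proposed choice $g(r)=-f^-(r,X_r,0)$ satisfies $f(r,X_r,0)\ge g(r)$ but not $f(r,y,z)\ge g(r)$ for other $y$; under the monotonicity condition (H2) the generator may tend to $-\infty$ as $y\to+\infty$. For instance $f(t,y,z)=-y^3$ satisfies (H1)--(H6) (with, say, a bounded dominating process $X$) yet admits no uniform lower bound, so for such $f$ the reduction you describe never gets off the ground.

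This is precisely why the paper does not apply Corollary~\ref{cor2.1} to $f$ itself but to the truncations $f_n=f\vee(-n)$, each of which is bounded below by the constant $-n$. The resulting solutions $(Y^n,Z^n,K^n)$ of RBSDE$(\xi,f_n+dV,L)$ decrease in $n$ (Lemma~\ref{lm2.3}, Proposition~\ref{prop2.2}), and the bulk of the paper's proof is the passage to the limit, which your proposal omits entirely: uniform a priori bounds on $Z^n$, $K^n$ on suitably stopped intervals via \cite[Lemma 4.2]{kl}; identification of the limit equation through the stability result Lemma~\ref{kl4.11} (with Remark~\ref{uw1.kl1}); transfer of the minimality condition to the limit, handled separately for the purely jump part $\sum_{t<T}(Y_t-L_t)\Delta^+K_t$ and for the c\`adl\`ag part $\int_0^T(Y_{t-}-\underline{L}_t)\,dK^*_t$ (the latter using that $\Delta^-K_t>0$ forces $\Delta^-K^n_t>0$ for large $n$, hence $Y^n_{t-}=\underline{L}_t$); and finally the integrability of $(Y,Z,K)$, deduced from the dominating process of (H6)/(H6*) rewritten as a BSDE, which for $p=1$ requires a further approximation $\bar X^n$ because one does not know a priori that $Z\in\mathcal{H}^q$ for some $q>\alpha$. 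Without the truncation and this monotone limit machinery, the claimed existence does not follow.
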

\begin{proof}
Let $f_n(t,y,z)=f(t,y,z)\vee(-n)$. By Corollary \ref{cor2.1}, for
$n\ge 1$ there exists a solution $(Y^n,Z^n,K^n)$ of
RBSDE($\xi$,$f_n+dV$,$L$). By Lemma \ref{lm2.3}, $Y^n\ge Y^{n+1}$
and $dK^n\le dK^{n+1}$, $n\ge 1$. By this and Proposition
\ref{prop2.2},
\begin{equation}\label{tw2.16.1}
\bar{Y}\le Y^n\le Y^1,\quad n\ge 1,
\end{equation}
where $(\bar{Y},\bar{Z})$ is a solution of BSDE($\xi$,$f+dV$). By
the above  (H2) we have
\begin{equation}\label{tw2.16.2}
|f_n(s,Y^n_s,0)|\le|f(s,Y^1_s,0)|+|f(s,\bar{Y}_s,0)|.
\end{equation}
Let $\tau^1_k=\inf\{t\ge
0:\int^t_0|f(s,Y^1_s,0)|\,ds+\int^t_0|f(s,\bar{Y},0)|\,ds>k\}$,
and let $\{\tau^2_k\}\subset\Gamma$ be a stationary sequence of
stopping times such that
$Y^{1,\tau^2_k},\bar{Y}^{\tau^2_k},V^{\tau^2_k}\in\mathcal{S}^2$,
$\int^{\tau^2_k}_0|f(s,0,0)|\,ds\in L^2$. Write
$\tau_k=\tau^1_k\wedge\tau^2_k$, $k\in\mathbb{N}$. By \cite[Lemma
4.2]{kl} and the definition of $\{\tau_k\}$, for $q\le2$ we have
\begin{align}
\label{tw2.16.3} &E\Big(\int^{\tau_k}_0|Z^n_s|^2\,ds\Big)^{q/2}
+E\Big(\int^{\tau_k}_0\,dK^n_s\Big)^q\nonumber\\
& \le C\Big(E\sup_{0\le t\le\tau_k}|Y^1_t|^q +E\sup_{0\le
t\le\tau_k}|\bar{Y}_t|^q +E\Big(\int^{\tau_k}_0\,d|V|_s\Big)^q
+E\Big(\int^{\tau_k}_0 f_n^-(s,Y^n_s,0)\,ds\Big)^q\Big)\nonumber\\
&\le C\Big(E\sup_{0\le t\le\tau_k}|Y^1_t|^q+E\sup_{0\le
t\le\tau_k}|\bar{Y}_t|^q+(2k)^q+\Big(\int^{\tau_k}_0\,d|V|_s\Big)^q\Big).
\end{align}
Set $g_n(s)=f_n(s,Y^n_s,0)$,
$h_n(s)=f_n(s,Y^n_s,Z^n_s)-f_n(s,Y^n_s,0)$. From the above, the
the definition of $\{\tau_k\}$ and (\ref{tw2.16.2}) it follows
that $g_n,h_n$  satisfy the assumptions of Lemma \ref{kl4.11} (see
also Remark \ref{uw1.kl1}). Hence, for $q<2$,
\[
E\int^{\tau_k}_0|Z^n_s-Z_s|^q\,ds\rightarrow 0,
\]
and, by stationarity of $\{\tau_k\}$, $Z^n\rightarrow Z$ in
measure $\lambda\otimes P$ on $[0,T]\times\Omega$. By this and by
(\ref{tw2.16.2}) and (\ref{tw2.16.3}),
\begin{equation}
\label{eq3.14} Y_t=\xi+\int^T_t f(s,Y_s,Z_s)\,ds
+\int^T_t\,dK_s+\int^T_t\,dV_s-\int^T_t Z_s\,dB_s,\quad t\in[0,T],
\end{equation}
where $Y_t=\lim_{n\rightarrow\infty}Y^n_t$,
$K_t=\lim_{n\rightarrow\infty}K^n_t$. It is obvious that $Y$ is
regulated and $Y_t\ge L_t$, $t\in[0,T]$. We have to show the
minimality condition for $K$ and integrability of $Z$ and $K$. We
know that $\sum_{t<T}(Y^n_t-L_t)\Delta^+K^n_t=0$. Letting
$n\rightarrow\infty$ we obtain
\[
\sum_{t<T}(Y_t-L_t)\Delta^+K_t=0.
\]
Therefore to prove the minimality condition for $K$ it suffices to
show that
\begin{equation}
\label{eq3.13} \int^T_0(Y_{t-}-\underline{L}_{t})\,dK^*_t=0
\end{equation}
where $\underline{L}_{t}$ is defined as in Proposition
\ref{prop2.3}. Note that
\[
\int^T_0(Y^n_{t-}-\underline{L}_{t})\,dK^{n,*}_t
=\int^T_0(Y^n_t-\underline{L}_t)\,dK^{n,c}_t
 +\sum_{0< t\le T}(Y^n_{t-}-\underline{L}_{t})\Delta^-K^n_t.
\]
We know that $dK^n\rightarrow dK$ in the total variation norm and
that $0\le Y^n_t-\underline{L}_t\le Y^1_t-\underline{L}_t$.
Therefore applying the Lebesgue dominated convergence theorem we
get $\int^T_0(Y_t-\underline{L}_t)\,dK^c_t=0$. This gives
(\ref{eq3.13}) if $\Delta^{-}K=0$. If $\Delta^-K_t>0$ for some
$t\in(0,T]$, then there exists $N\in\mathbb{N}$ such that
$\Delta^-K^n_t>0$ for $n\ge N$. Hence
$Y^n_{t-}=\underline{L}_{t}$, $n\ge N$. By Proposition
\ref{prop2.2} and Remark \ref{uw2.0},  $Y_{t-}\le
Y^n_{t-}=\underline{L}_{t}$, and consequently,
$Y_{t-}=\underline{L}_{t}$. Hence
\[
\sum_{t\le T}(Y_{t-}-\underline{L}_t)\Delta^-K_t=0,
\]
so (\ref{eq3.13}) is satisfied. This proves the the minimality
condition for $K$. Note that by (H6) the process $X$ is of the
form
\[
X_t=X_0+\int^t_0\,dC_s+\int^t_0 H_s\,dB_s
\]
for some $C\in\mathcal{V}^p$, $H\in\mathcal{H}$. It can be
rewritten in the form
\[
X_t=\xi+\int^T_t
f(s,X_s,H_s)\,ds+\int^T_t\,dC'_s+\int^T_t\,dV_s-\int^T_t H_s\,dB_s
\]
for some $C'\in\mathcal{V}^p$. Let $(\bar{X},\bar{Z})$ be a
solution of  BSDE($\xi$,$f+dV^++dC^{',+}$). By Proposition
\ref{prop2.2}, $\bar{X}\ge X$, so $\bar{X}\ge L$. Note that the
triple $(\bar{X},\bar{H},C^{',+})$ is a solution of
RBSDE($\xi$,$f+dV^+$,$\bar{X}$). Since $\bar{X}\ge L$, then by
Proposition \ref{prop2.2} for $p>1$, $\bar{X}\ge Y$. For $p=1$ we
can not for now apply  Proposition \ref{prop2.2}  since we do not
know a priori that $Z\in\mathcal{H}^q$ for some $q>\alpha$ (see
Remark \ref{uw2.0}). Let $(\bar{X}^n,\bar{H}^n)$ be a solution of
BSDE($\xi$,$f_n+dV^++dC^{',+}$). By Proposition \ref{prop2.2},
$\bar{X}^n\ge\bar{X}\ge L$. Hence, by Proposition \ref{prop2.2}
again,
\begin{equation}\label{tw2.16.4}
\bar{X}^n\ge Y^n,\quad n\ge 1.
\end{equation}
In the same manner as in the proof of (\ref{eq3.14}) we show that
$\bar{X}^n_t\searrow\tilde{X}_t$, $t\in[0,T]$,
$\bar{H}^n\rightarrow\tilde{H}$ in measure $\lambda\otimes P$ on
$[0,T]\times\Omega$, and
\[
\tilde{X}_t=\xi+\int^T_t f(s,\tilde{X}_s,\tilde{H}_s)\,ds
+\int^T_t\,dC^{',+}_s+\int^T_t\,dV^+_s-\int^T_t \tilde{H}_s\,dB_s.
\]
Since $\bar{Y}\le\tilde{X}\le\bar{X}^1$, it follows that
$\tilde{X}\in\mathcal{S}^q$, $q\in(0,1)$. By \cite[Lemma
3.1]{bdh},  $\tilde{Z}\in\mathcal{H}^q$, $q\in(0,1)$. Therefore by
Proposition \ref{prop2.2} and Remark \ref{uw2.0},
$\tilde{X}=\bar{X}$. By this and (\ref{tw2.16.4}), $\bar{X}\ge Y$
for $p=1$. By \cite[Lemma 4.2, Proposition 4.3]{kl} we have
integrability of $Y$, $Z$ and $K$ for $p\ge 1$.
\end{proof}

\nsubsection{Penalization method for reflected BSDEs}
\label{sec4}

We assume that the  barrier $L$ has regulated trajectories.  We
consider approximation of the solution of RBSDE($\xi$,$f+dV$,$L$)
by a modified penalization method of the form
\begin{align}
\label{eq3.1}
Y^n_t&=\xi+\int^T_tf(s,Y^n_s,Z^n_s)\,ds +\int^T_t\,dV_s
-\int^T_t Z^n_s,\,dB_s\nonumber\\
&\quad+n\int_t^T(Y^n_s-L_s)^-ds+\sum_{t\leq
\sigma_{n,i}<T}(Y^n_{\sigma_{n,i}+}
+\Delta^+V_{\sigma_{n,i}}-L_{\sigma_{n,i}})^-,\quad
t\in[0,T]
\end{align}
with specially defined arrays of stopping times
$\{\{\sigma_{n,i}\}\}$ exhausting  right-side jumps of $L$ and
$V$. We define $\{\{\sigma_{n,i}\}\}$ inductively.  We first set
$\sigma_{1,0}=0$ and
\[
\sigma_{1,i}=\inf\{t>\sigma_{1,i-1}:\Delta^+
L_s<-1\,\,\mathrm{or}\,\,\Delta^+ V_s<-1\}\wedge T,\quad
i=1,\dots,k_1
\]
for some $k_1\in\N$. Next, for $n\in\N$ and given array
$\{\{\sigma_{n,i}\}\}$ we set $\sigma_{n+1,0}=0$ and
\[
\sigma_{n+1,i}=\inf\{t>\sigma_{n+1,i-1}:
\Delta^+ L_s<-1/(n+1)\,\,\mathrm{or}\,\,\Delta^+
V_s<-1/(n+1)\}\wedge T
\]
for $i=1,\dots,j_{n+1}$, where $j_{n+1}$ is
chosen so that $P(\sigma_{n+1,j_{n+1}}<T)\to 0$ as $n\to\infty$ and
\[
\sigma_{n+1,i} =\sigma_{n+1,j_{n+1}}\vee
\sigma_{n,i-j_{n+1}},\quad
i=j_{n+1}+1,\dots,k_{n+1},\,k_{n+1}=j_{n+1}+k_{n}.\] Since
$\Delta^+L_s<-1/n$ or $\Delta^+V_s<-1/n$ implies that
$\Delta^+L_s<-1/(n+1)$ or $\Delta^+V_s<-1/(n+1)$, it follows from
our construction that
\begin{equation}
\label{eq3.2}
\bigcup_i [[\sigma_{n,i}]]\subset \bigcup_i [[\sigma_{n+1,i}]]\quad n\in\N.
\end{equation}
Moreover,  on  each interval $(\sigma_{n,i-1},\sigma_{n,i}]$,
$i=1,\dots,{k_n+1}$, where $\sigma_{n,{k_n+1}}=T$, the pair
$(Y^n,Z^n)$ is a solution of the classical generalized BSDEs  of
the form
\begin{align}
\label{3.2.1}
Y^n_t&=L_{\sigma_{n,i}}\vee(Y^n_{\sigma_{n,i}+}+\Delta^+
V_{\sigma_{n,i}})
+\int^{\sigma_{n,i}}_tf(s,Y^n_s,Z^n_s)\,ds+\int^{\sigma_{n,i}}_t\,dV_s\nonumber\\
&\quad+n\int^{\sigma_{n,i}}_t
(Y^n_s-L_s)^-\,ds-\int^{\sigma_{n,i}}_tZ^n_s\,dB_s,\quad
t\in(\sigma_{n,i-1},\sigma_{n,i}]
\end{align}
and $Y^n_0=L_{0}\vee(Y^n_{0+}+\Delta^+ V_{0})$, $n\in\N$. Observe
that (\ref{eq3.1})  can written in the following shorter form
\begin{equation}
\label{eq3.3}
Y^n_t=\xi+\int_t^Tf(s,Y^n_s,Z^n_s)\,ds+\int^T_t\,dV_s
+\int_t^T\,dK^n_s-\int_t^TZ^n_s\,dB_s,
\end{equation}
where
\[
K^n_t=n\int_0^t (Y^n_s-L_s)^-\,ds +\sum_{t\leq
\sigma_{n,i}<T}(Y^n_{\sigma_{n,i}+}
+\Delta^+V_{\sigma_{n,i}}-L_{\sigma_{n,i}})^-
=:K^{n,*}_t+K^{n,d}_t.
\]
For similar approximation scheme see  \cite{KRS}. As compared with
the usual penalization method, the term $K^n$ includes the purely
jumping part $K^{n,d}$ consisting of right jumps. If the processes
$L, V$ are right-continuous then $K^{n}=K^{n,*}$, so (\ref{eq3.1})
(or, equivalently, (\ref{eq3.3})) reduces to the usual
penalization scheme. Note that if $Y$ is a limit of increasing
sequence $\{Y^n\}$ of c\`adl\`ag solutions of BSDEs, then by the
monotone convergence theorem for BSDEs (see, e.g., \cite{peng}),
$Y$ is also c\`adl\`ag. On the other hand, if $L$ is a regulated
process, then in general the solution $Y$ need not be c\`adl\`ag.
Therefore the usual penalization equations have to be modified by
adding  right jumps corrections.

\begin{theorem}
Let $(Y^n,Z^n)$, $n\in\mathbb{N}$, be a solution of
{\rm(\ref{eq3.1})}.
\begin{enumerate}
\item[\rm(i)]Assume that $p>1$ and
\textnormal{(H1)--(H6)} are satisfied. Then $Y^n_t\nearrow Y_t$,
$t\in[0,T]$, and for any $\gamma\in[1,2)$,
\begin{equation}
\label{eq4.6}
E\Big(\int^T_0|Z^n_s-Z_s|^{\gamma}\,ds\Big)^{p/2}\rightarrow 0,
\end{equation}
where $(Y,Z,K)$ is unique solution of
\textnormal{RBSDE($\xi$,$f+dV$,$L$)}. Moreover, if $\Delta^-K_t=0$
for $t\in(0,T]$, then \mbox{\rm(\ref{eq4.6})} hold true with
$\gamma=2$.

\item[\rm(ii)]Assume that  $p=1$ and \textnormal{(H1)--(H5), (H6*), (Z)}
are satisfied. Then $Y^n_t\nearrow Y_t$, $t\in[0,T]$, and for any
$\gamma\in[1,2)$ and $r\in(0,1)$,
\begin{equation}
\label{eq4.7}
E\Big(\int^T_0|Z^n_s-Z_s|^{\gamma}\,ds\Big)^{r/2}\rightarrow 0,
\end{equation}
where $(Y,Z,K)$ is a unique solution of
\textnormal{RBSDE($\xi$,$f+dV$,$L$)}. If $\Delta^-K_t=0$ for
$t\in(0,T]$, then \mbox{\rm(\ref{eq4.7})} hold true with
$\gamma=2$.
\end{enumerate}
\end{theorem}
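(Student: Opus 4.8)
The plan is to obtain $(Y,Z,K)$ as the increasing limit of the penalized pairs $(Y^n,Z^n)$ and to identify it by means of the general stability result Theorem~\ref{kl4.12}.

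\emph{Monotonicity.} Writing $f^n(s,y,z)=f(s,y,z)+n(y-L_s)^-$, equation (\ref{eq3.1}) is of the form (\ref{eq1}) with coefficient $f^n$, data $\xi,V$ and jump times $\{\sigma_{n,i}\}$. Since $y\mapsto (y-L_s)^-$ is nonincreasing, $f^n$ still obeys (H1)--(H2), while $(Y^{n+1}_s-L_s)^-\ge 0$ gives $f^n(s,Y^{n+1}_s,Z^{n+1}_s)\le f^{n+1}(s,Y^{n+1}_s,Z^{n+1}_s)$; combined with the inclusion (\ref{eq3.2}) of the graphs of the jump times, Proposition~\ref{prop2.2} (for $p>1$ directly, for $p=1$ through Remark~\ref{uw2.0} and (Z), the required $\mathcal{S}^p$-- resp. $L^q$--bounds being supplied by the a priori estimates below) yields $Y^n\le Y^{n+1}$. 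Hence $Y^n\nearrow Y:=\lim_n Y^n$, and, exactly as in the proof of Theorem~\ref{tw2.16}, $\bar Y\le Y^n\le\bar X$, where $\bar Y$ solves BSDE$(\xi,f+dV)$ and $\bar X$ is the supersolution furnished by (H6) (resp. (H6*)); arguing as for (\ref{tw2.16.3}) one gets, along a stationary sequence $\{\tau_k\}$, uniform localized bounds on $\int_0^{\tau_k}|Z^n_s|^2\,ds$ and $\int_0^{\tau_k}dK^n_s$.

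\emph{Passage to the limit.} I would apply Theorem~\ref{kl4.12} to (\ref{eq3.3}) with $g_n\equiv 0$, $A^n\equiv-V$ and $K^n=K^{n,*}+K^{n,d}$ the penalization process. Then $dA^n\le dA^{n+1}$ and $\sup_n E|A^n|_T=E|V|_T<\infty$ by (H3), so (a) holds; hypothesis (d) is the sandwich $\bar Y\le Y^n\le\bar X$ from the previous step, and (e), (f) are immediate ($g_n\equiv 0$, (H3), $Y^n\nearrow Y$). For (c) note that $K^{n,*}$ is continuous and $K^{n,d}$ carries only right jumps, so $\Delta^-K^n\equiv 0$ and one may take $C\equiv 0$. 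The only point requiring an argument is (b): with $K^n-A^n=K^n+V$, the penalization part $\int_\sigma^\tau(Y_s-Y^n_s)\,dK^{n,*}_s+\sum_{\sigma\le s<\tau}(Y_s-Y^n_s)\Delta^+K^n_s$ is nonnegative because $Y_s\ge Y^n_s$ and $K^n$ is increasing, while the fixed $V$--part $\int_\sigma^\tau(Y_s-Y^n_s)\,dV^*_s+\sum(Y_s-Y^n_s)\Delta^+V_s$ tends to $0$ by dominated convergence since $Y^n_s\nearrow Y_s$ and $\int_0^T|Y_s-\bar Y_s|\,d|V|_s\in L^1$; hence the $\liminf$ is $\ge 0$. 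Theorem~\ref{kl4.12} then produces $Z\in\mathcal{H}$ and $K\in\mathcal{V}^+$ with
\[
Y_t=\xi+\int_t^T f(s,Y_s,Z_s)\,ds+\int_t^T\,dK_s+\int_t^T\,dV_s-\int_t^T Z_s\,dB_s,\quad t\in[0,T],
\]
together with $Z^n\to Z$ $\lambda\otimes P$--a.e. and $E\int_0^{\tau_k}|Z^n_s-Z_s|^\gamma\,ds\to 0$ for $\gamma\in(0,2)$, and for $\gamma=2$ once $\Delta^-K\equiv 0$ (since then $|\Delta^-C|+|\Delta^-K|=0$); moreover $dK^n\to dK$.

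\emph{Identification of the solution.} It remains to check that $(Y,Z,K)$ solves RBSDE$(\xi,f+dV,L)$. That $Y\ge L$ follows because the localized bound on $K^{n,*}_T=n\int_0^T(Y^n_s-L_s)^-\,ds$ forces $\int_0^T(Y_s-L_s)^-\,ds=0$, whence $Y_s\ge L_s$ a.e.; since $L$ is regulated and $Y^n_{\sigma_{n,i}}\ge L_{\sigma_{n,i}}$ by (\ref{3.2.1}), passing to right/left limits upgrades this to $Y_t\ge L_t$ for every $t$. For the minimality condition recall that for regulated $L$ one has $\underline{L}_t=L_{t-}$, so the condition reads $\int_0^T(Y_{s-}-L_{s-})\,dK^*_s+\sum_{s<T}(Y_s-L_s)\Delta^+K_s=0$. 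The jump part is obtained from (\ref{3.2.1}): whenever $\Delta^+K^n_{\sigma_{n,i}}>0$ one has $Y^n_{\sigma_{n,i}}=L_{\sigma_{n,i}}$, so $(Y^n_{\sigma_{n,i}}-L_{\sigma_{n,i}})\Delta^+K^n_{\sigma_{n,i}}=0$, and letting $n\to\infty$ gives $\sum_{s<T}(Y_s-L_s)\Delta^+K_s=0$. For the continuous part I would use $dK^{n,*}=n(Y^n_s-L_s)^-\,ds$ and the identity
\[
0\le\int_0^T(Y_s-L_s)\,dK^{n,*}_s=\int_0^T(Y_s-Y^n_s)\,dK^{n,*}_s-n\int_0^T((Y^n_s-L_s)^-)^2\,ds\le\int_0^T(Y_s-Y^n_s)\,dK^{n,*}_s,
\]
whose right-hand side tends to $0$ by the uniform bound on $K^{n,*}$ and $Y^n\to Y$; passing to the limit with $dK^{n,*}\to dK^*$ yields the vanishing of $\int(Y_{s-}-L_{s-})\,dK^*_s$ off the left jumps of $K$, and at a time with $\Delta^-K_t>0$ the concentration of the penalizing mass forces $Y_{t-}=L_{t-}$. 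I expect this last identification to be the main obstacle: the penalized reflectors $K^n$ have no left jumps, so the Skorokhod-type relation at the left-jump times of the limit $K$ cannot be inherited from $K^n$ (as in the proof of Theorem~\ref{tw2.16}) and must be recovered from the penalization structure itself. Once the minimality condition is verified, $(Y,Z,K)$ solves RBSDE$(\xi,f+dV,L)$, and uniqueness (Theorem~\ref{tw2.16}) identifies it with the solution. Finally, the global convergences (\ref{eq4.6}), (\ref{eq4.7}) follow by upgrading the localized $Z$--convergence with the uniform integrability provided by the a priori estimates (via Doob's and the Burkholder--Davis--Gundy inequalities), the exponent $\gamma<2$ together with the outer power $p/2$ (resp. $r/2$) absorbing the interpolation loss, and the case $\gamma=2$ being available precisely when $\Delta^-K\equiv 0$.
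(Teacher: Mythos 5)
Your first two steps (monotonicity via Proposition \ref{prop2.2}, and passage to the limit equation via Theorem \ref{kl4.12} with $A^n=-V$, $g_n\equiv 0$, $C\equiv 0$) coincide with the paper's argument, including the treatment of hypothesis (b) and the construction, from (H6)/(H6*), of dominating processes needed for (d) and for the a priori bounds on $Z^n$, $K^n$. The genuine gap is exactly where you suspect it: the minimality condition. Your direct passage to the limit in the penalization structure does not go through as written. For the right-jump part, concluding $\sum_{s<T}(Y_s-L_s)\Delta^+K_s=0$ from $(Y^n_{\sigma_{n,i}}-L_{\sigma_{n,i}})\Delta^+K^n_{\sigma_{n,i}}=0$ requires $\Delta^+K^n_s\to\Delta^+K_s$ (equivalently $Y^n_{s+}\to Y_{s+}$), which pointwise monotone convergence of $Y^n$ does not give; unlike in the proof of Theorem \ref{tw2.16}, here the $K^n$ are not monotone in $n$, so there is no convergence $dK^n\to dK$ in total variation to invoke. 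For the left-jump part the obstruction is structural: each $dK^{n,*}$ is absolutely continuous, while $dK^*$ may carry atoms at the left jumps of $K$, so the atomic part of $\int_0^T(Y_{s-}-L_{s-})\,dK^*_s$ is invisible to any limit of $\int(\cdot)\,dK^{n,*}_s$; your closing assertion that ``the concentration of the penalizing mass forces $Y_{t-}=L_{t-}$'' is precisely the statement that needs proof, and you acknowledge not having it. Even the intermediate claim $\int_0^T(Y_s-Y^n_s)\,dK^{n,*}_s\to 0$ is not justified, since both the integrand and the measure vary with $n$.

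The paper closes this gap by an entirely different mechanism that avoids any limit passage in the Skorokhod conditions. Its Step 1 shows that $(Y^n,Z^n,K^n)$ is itself a solution of \textnormal{RBSDE}$(\xi,f+dV,L^n)$ with the modified barrier $L^n=L-(Y^n-L)^-\le L$. Proposition \ref{prop2.5} then gives the optimal stopping representation of $Y^n$ with barrier $L^n$, hence the upper bound with barrier $L$; the convergence $E\int_0^{\tau_k}|f(s,Y^n_s,Z^n_s)-f(s,Y_s,Z_s)|\,ds\to 0$ together with Lemma \ref{lem2} (stability of the value function under $L^1$ perturbations of the data) lets one pass to the limit in this representation, while the supermartingale property of $Y_t+\int_0^t f(s,Y_s,Z_s)\,ds-V_t$ gives the reverse inequality (\ref{Tw.3.5}). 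Thus $Y$ is identified as the value process of the optimal stopping problem, and the minimality condition for $K$ — including its behaviour at the left jumps of $K$ — follows from Corollary \ref{wn2.1.0} on Mertens decompositions of Snell envelopes (for $p=1$ this is done on $[0,\tau_k]$ and extended by stationarity). To repair your proof you should import this Snell-envelope detour; the direct penalization-limit route is not merely harder, it runs against the mismatch between the absolutely continuous reflectors $K^{n,*}$ and the possibly atomic limit $dK^*$.
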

\begin{proof}
Let $p\ge 1$. Without loss of generality we may assume that
$\mu=0$.  Let $(Y^n,Z^n)$, $n\in\mathbb{N}$ be a solution of
(\ref{eq3.1}). By Proposition \ref{prop2.2}, $Y^n_t\le Y^{n+1}_t$,
$t\in[0,T]$, $n\in\mathbb{N}$. The rest of the proof we divide
into 3 steps.

{\em Step 1.} We first show that for $n\in\mathbb{N}$  the triple
$(Y^n,Z^n,K^n)$ is a solution of RBSDE$(\xi,f+dV,L^n)$ with
$L^n=L-(Y^n-L)^-$. Note that $Y^n_t\ge L^n_t$, $t\in[0,T]$.
Indeed, if $Y^n_t\ge L_t$ then $Y^n_t\ge L^n_t$, and if $Y^n_t<
L_t$ then $Y^n_t\ge Y^n_t=L_t$. Moreover,
\begin{align*}
\int_0^T (Y^n_{s-}-L^n_{s-})\,dK^{n,*}_s
&=n\int^T_0 (Y^n_s-L^n_s)(Y^n_s-L_s)^-\,ds\\
&=n\int^T_0 (Y^n_s-L_s)^+(Y^n_s-L_s)^-\,ds=0
\end{align*}
and
\begin{align*}
\sum_{s<T}(Y^n_s-L^n_s)\Delta^+K^n_s
&=\sum_{\sigma_{n,i}<T}(Y^n_{\sigma_{n,i}}
-L^n_{\sigma_{n,i}})(Y^n_{\sigma_{n,i}+}+\Delta^+V_{\sigma_{n,i}}
-L_{\sigma_{n,i}})^-\\
&=\sum_{\sigma_{n,i}<T}(Y^n_{\sigma_{n,i}}
-L_{\sigma_{n,i}})^+(Y^n_{\sigma_{n,i}+}
+\Delta^+V_{\sigma_{n,i}}-L_{\sigma_{n,i}})^-=0.
\end{align*}
Indeed, suppose that
\begin{equation}
\label{eq4.5} \sum_{\sigma_{n,i}<T}(Y^n_{\sigma_{n,i}}
-L_{\sigma_{n,i}})^+(Y^n_{\sigma_{n,i}+}+\Delta^+V_{\sigma_{n,i}}
-L_{\sigma_{n,i}})^-\neq 0.
\end{equation}
Then there is $1\le i\le k_n$ such that $Y^n_{\sigma_{n,i}}
-L_{\sigma_{n,i}}>0$ and
$Y^n_{\sigma_{n,i}+}+\Delta^+V_{\sigma_{n,i}}-L_{\sigma_{n,i}}<0$.
By the last inequality and (\ref{eq3.3}),
$\Delta^+Y^n_{\sigma_{n,i}}=\Delta^+K^n_{\sigma_{n,i}}
-\Delta^+V_{\sigma_{n,i}}=-(Y^n_{\sigma_{n,i}+}
+\Delta^+V_{\sigma_{n,i}}-L_{\sigma_{n,i}})^--\Delta^+V_{\sigma_{n,i}}$.
Hence $Y^n_{\sigma_{n,i}}=L_{\sigma_{n,i}}$, which contradicts
(\ref{eq4.5}).

{\em Step 2.} \noindent We now show that $Y_t:=\sup_{n\ge
1}Y^n_t$, $t\in[0,T]$, is a regulated process satisfying condition
(d) of Definition \ref{def1} and that $(Y,Z,K)$ has the desired
integrability properties. To this end, we first  prove that if
$p>1$ then (\ref{eq4.6}) holds true, and if $p=1$, then there
exists a stationary sequence of stopping times $\{\tau_k\}$ such
that for any $\gamma\in[1,2)$ and $r\in(0,1)$,
\[
E\Big(\int^{\tau_k}_0|Z^n_s-Z_s|^{\gamma}\,ds\Big)^{r/2}\rightarrow
0.
\]
To show this we will use \cite[Lemma 4.2]{kl}. Let $p>1$. Then by
(H6) there exists a process
$X\in(\mathcal{M}_{loc}+\mathcal{V}^p)\cap\mathcal{S}^p$ such that
$X\ge L$ and $\int^T_0 f^-(s,X_s,0)\,ds\in L^p$. If $p=1$ then by
(H6*) there exists  $X$ of class (D) such that
$X\in\mathcal{M}_{loc}+\mathcal{V}^1$, $X\ge L$ and $\int^T_0
f^-(s,X_s,0)\,ds\in L^1$. Since the Brownian filtration has the
representation property, there exist processes
$H\in\mathcal{M}_{loc}$ and $C\in\mathcal{V}^p$ such that
\[
X_t=X_T-\int^T_t\,dC_s-\int^T_t H_s\,dB_s,\quad t\in[0,T],
\]
which can be rewritten in form
\[
X_t=\xi+\int^T_t
f(s,X_s,H_s)\,ds+\int^T_t\,dV_s+\int^T_t\,dK'_s-\int^T_t\,dA'_s
-\int^T_t H_s\,dB_s
\]
for some $A',K'\in\mathcal{V}^{+,p}$ with $p\ge 1$. Let
$(\bar{X}^n,\bar{H}^n)$ be a solution of the BSDE
\begin{align*}
\bar{X}^n_t&=\xi+\int^T_t f(s,\bar{X}^n_s,\bar{H}^n_s)\,ds+\int^T_t\,dV_s
+\int^T_t\,dK'_s-\int^T_t \bar{H}^n_s\,dB_s\\
&\quad+\sum_{t\le\sigma_{n,i}<T}(\bar{X}^n_{\sigma_{n,i}+}
+\Delta^+V_{\sigma_{n,i}}-L_{\sigma_{n,i}})^-,\quad t\in[0,T].
\end{align*}
By Proposition \ref{prop2.2} and Remark \ref{uw2.0}, $\bar{X}^n\ge
X\ge L$, so we may rewrite the above equation in the form
\begin{align*}
\bar{X}^n_t&=\xi+\int^T_t f(s,\bar{X}^n_s,\bar{H}^n_s)\,ds
+\int^T_t\,dV_s+\int^T_t\,dK'_s+n\int^T_t(\bar{X}^n_s-L_s)^-\,ds\\
&\quad+\sum_{t\le\sigma_{n,i}<T}(\bar{X}^n_{\sigma_{n,i}+}
+\Delta^+V_{\sigma_{n,i}}-L_{\sigma_{n,i}})^-
-\int^T_t \bar{H}^n_s\,dB_s,\quad t\in[0,T].
\end{align*}
By Proposition \ref{prop2.2} and Remark \ref{uw2.0}, $\bar{X}^n\ge
Y^n$. Also note that
\begin{align*}
&(\bar{X}^n_{\sigma_{n,i}+}+\Delta^+V_{\sigma_{n,i}}
-L_{\sigma_{n,i}})^-\le (X_{\sigma_{n,i}+}+\Delta^+V_{\sigma_{n,i}}
-L_{\sigma_{n,i}})^-\\
&\quad=(\Delta^+X_{\sigma_{n,i}}+\Delta^+V_{\sigma_{n,i}}
+X_{\sigma_{n,i}}-L_{\sigma_{n,i}})^-\le(\Delta^+X_{\sigma_{n,i}}
+\Delta^+V_{\sigma_{n,i}})\\
&\quad\quad\le\Delta^+|C|_{\sigma_{n,i}}+\Delta^+|V|_{\sigma_{n,i}}.
\end{align*}
Let $(\tilde{X},\tilde{H})$ be a solution of the BSDE
\begin{align*}
\tilde{X}_t&=\xi+\int^T_t f(s,\tilde{X}_s,\tilde{H}_s)\,ds
+\int^T_t\,dV_s+\int^T_t\,dK'_s+n\int^T_t(\tilde{X}_s-L_s)^-\,ds\\
&\quad+\int^T_t\,d|C|_s+\int^T_t\,d|V|_s-\int^T_t \tilde{H}_s\,dB_s,
\quad t\in[0,T].
\end{align*}
The pair $(\tilde{X},\tilde{H})$ does not depend on $n$, because
by Proposition \ref{prop2.2} and Remark \ref{uw2.0},
$\tilde{X}\ge\bar{X}^n$, so the term involving $n$ on the
right-hand side of the above equation  equals zero. By the last
inequality we also have $\tilde{X}\ge Y^n$. Thus all the
assumptions of \cite[Lemma 4.2]{kl} are satisfied.  Applying
\cite[Lemma 4.2]{kl} we get
\begin{align}
\label{Tw.3.1} &E(K^n_T)^p+E\Big(\int^T_0|Z^n_s|^2\,ds\Big)^{p/2}
\le CE\Big(\sup_{t\le
T}(|Y^1_t|^p+|\tilde{X}_t|^p)+\Big(\int^T_0\,d|V|_s\Big)^p
\nonumber\\
&\quad+\Big(\int^T_0|f^-(s,\tilde{X}_s,0)|\,ds\Big)^p
+\Big(\int^T_0\tilde{X}^+_s\,ds\Big)^p
+\Big(\int^T_0|f(s,0,0)|\,ds\Big)^p\Big)
\end{align}
if $p>1$, which means that $\{Z^n\}$ is bounded in
$\mathcal{H}^p$. If $p=1$ then by \cite[Lemma 4.2]{kl}, for any
$q\in(0,1)$ we have
\begin{align}
\label{Tw.3.2}
&E\Big(\int^T_0|Z^n_s|^2\,ds\Big)^{q/2}
\le CE\Big(\sup_{t\le T}(|Y^1_t|^q+|\tilde{X}_t|^q)
+\Big(\int^T_0|f(s,0,0)|\,ds\Big)^q\nonumber\\
&\quad+\Big(\int^T_0|f^-(s,\tilde{X}_s,0)|\,ds\Big)^q
+\Big(\int^T_0\tilde{X}^+_s\,ds\Big)^q
+\Big(\int^T_0\,d|V|_s\Big)^q\Big).
\end{align}
We next check that the assumption of Theorem \ref{kl4.12} are
satisfied. We know that $Y^n$ is of class (D),
$Z^n\in\mathcal{H}$, $K^n\in\mathcal{V}^+$ and $t\mapsto
f(t,Y^n_t,Z^n_t)\in L^1(0,T)$. Since $V$ is a finite variation
process and $A^n=-V$, we have $A^n\le A^{n+1}$ and
$E|A^n|_T<\infty$ for $n\in\mathbb{N}$, i.e. assumption (a) is
satisfied. Let $\tau,\sigma\in\mathcal{T}$ be  stopping times such
that $\sigma\le\tau$. By the Lebesgue dominated convergence
theorem,
$\lim_{n\rightarrow\infty}\int^{\tau}_{\sigma}(Y_s-Y^n_s)\,dV^*_s=0$
and $\lim_{n\rightarrow\infty}\sum_{\sigma\le
s<\tau}(Y_s-Y^n_s)\Delta^+V_s=0$. Since we know that
$\int^{\tau}_{\sigma}(Y_s-Y^n_s)\,dK^{n,*}_s+\sum_{\sigma\le
s<\tau}(Y_s-Y^n_s)\Delta^+K^n_s\ge 0$, it follows that
\[
\liminf_{n\rightarrow \infty}
\Big(\int^{\tau}_{\sigma}(Y_s-Y^n_s)\,d(K^n_s-A^n_s)^*+\sum_{\sigma\le
s<\tau}(Y_s-Y^n_s)\Delta^+(K^n_s-A^n_s)\Big)\ge 0,
\]
i.e. (b) is satisfied. It is easy to see that $\Delta^-K^n_t=0$
for $n\in\mathbb{N}$ and $t\in[0,T]$, so (c) is satisfied. Let
$\bar{y}=Y^1$ and $\underline{y}=\tilde{X}$. Then
$\bar{y},\underline{y}\in\mathcal{V}^1+\mathcal{M}_{loc}$\,,
$\bar{y},\underline{y}$ are of class (D) and
\[
E\int^T_0 f^+(s,\bar{y}_s,0)\,ds+E\int^T_0
f^-(s,\underline{y}_s,0)\,ds<\infty.
\]
Since we already have shown that $\bar{y}_t\le
Y^n_t\le\underline{y}_t$, $t\in[0,T]$, condition (d) is satisfied.
Condition (e) follows from (H3), whereas (f) is satisfied by the
very definition of $Y$. Thus all the assumptions of Theorem
\ref{kl4.12} are satisfied. Therefore  $Y$ is regulated and there
exist $K\in\mathcal{V}^+$, $Z\in\mathcal{H}$ such that
\[
Y_t=\xi+\int^T_tf(s,Y_s,Z_s)\,ds+\int^T_t\,dV_s
+\int^T_t\,dK_s-\int^T_t Z_s\,dB_s,\quad t\in[0,T].
\]
Furthermore,  $Z^n\rightarrow Z$ in measure $\lambda\otimes\,P$,
which when combined with (\ref{Tw.3.1}) and (\ref{Tw.3.2}) implies
that if $p>1$ then $Z\in\mathcal{H}^p$ and (\ref{eq4.6}) is
satisfied, whereas if $p=1$, then $Z\in\mathcal{H}^q$ for
$q\in(0,1)$  and there exists a stationary sequence $\{\tau_k\}$
such that
\begin{equation}\label{Tw3.3}
E\int^{\tau_k}_0|Z^n_s-Z_s|^{\gamma}\,ds\rightarrow 0,\quad \gamma\in[1,2).
\end{equation}
We will show that
\begin{equation}\label{Tw.3.4}
\sup_{n\ge 1}E\Big(\int^T_0|f(s,Y^n_s,Z^n_s)|\,ds\Big)^p
+E\Big(\int^T_0 |f(s,Y_s,Z_s)|\,ds\Big)^p<\infty.
\end{equation}
If  $p>1$ then  by (H1),
\begin{align*}
&E\Big(\int^T_0|f(s,Y^n_s,Z^n_s)|\,ds\Big)^p\\
&\quad\le C_p\Big(\Big(\int^T_0|f(s,\tilde{X}_s,0)|\,ds\Big)^p
+\Big(\int^T_0|f(s,Y^1_s,0)|\,ds\Big)^p
+E\Big(\int^T_0|Z^n_s|^2\,ds\Big)^{p/2}\Big).
\end{align*}
If $p=1$ then  by (Z),
\[
E\int^T_0 |f(s,Y^n_s,Z^n_s)|\,ds \le \gamma E\int^T_0
(g_s+|Y^n_s|+|Z^n_s|)^{\alpha}\,ds +E\int^T_0 |f(s,Y^n_s,0)|\,ds.
\]
By H\"older's  inequality and (H2),
\begin{align*}
&\gamma E\int^T_0 (g_s+|Y^n_s|+|Z^n_s|)^{\alpha}\,ds
+E\int^T_0 |f(s,Y^n_s,0)|\,ds\\
&\quad\le E\Big(\int^T_0|Z^n_s|^2\,ds\Big)^{\alpha/2}
+\gamma E\int^T_0 (g_s+|\tilde{X}_s|+|Y^1_s|)^{\alpha}\,ds\\
&\qquad+E\int^T_0 |f(s,Y^1_s,0)|+|f(s,\tilde{X}_s,0)|\,ds.
\end{align*}
By Fatou's lemma, (\ref{Tw.3.1}), (\ref{Tw.3.2}) we have
(\ref{Tw.3.4}),  which when combined with integrability of $Y,K$
implies that $K\in\mathcal{V}^{p,+}$.

{\em Step 3.} We show that the triple $(Y,Z,K)$ is a solution of
RBSDE($\xi$,$f+dV$,$L$). By (\ref{Tw.3.4}),
$\sup_{n\ge1}EK^n_T<\infty$, so $\{n\int_0^T(Y^n_s-L_s)^-ds\}$ is
bounded in $L^1$. Therefore, up to a subsequence,
$(Y^n_t-L^n_t)^-\to0$ $P$-a.s. for a dense subset of $t$. Hence
$Y_t\geq L_t$ for a dense subset of $t$. Consequently, $Y_{t+}\ge
L_{t+}$ for every $t\in[0,T)$. In fact,  $Y_t\ge L_t$ for every
$t\in[0,T)$. Indeed, if $\Delta^+(L_t+V_t)\ge 0$ for some
$t\in[0,T)$ then
\[
Y_t+V_t=-\Delta^+(Y_t+V_t)+Y_{t+}+V_{t+}\ge Y_{t+}+V_{t+} \ge
L_{t+}+V_{t+}\ge L_t+V_t
\]
whereas if $\Delta^+(L_t+V_t)<0$ for some $t\in[0,T)$ then
$t\in\bigcup_i[[\sigma_{n,i}]]$ for sufficiently large $n$, which
implies that $\Delta^+K^n_t=(Y^n_{t+}-L_t+\Delta^+V_t)^-$. Suppose
that $Y^n_t<L_t$ for some $t$. Since
$\Delta^+(Y_t+V_t)=-\Delta^+K^n_t$, we then have
\begin{align*}
Y^n_{t+}-L_t+\Delta^+V_t<Y^n_{t+}-Y^n_t+\Delta^+V_t
=-(Y^n_{t+}-L_t+\Delta^+V_t)^-,
\end{align*}
which ... contradiction. Thus $Y^n_t\ge L_t$ for every $t\in
[0,T)$, and hence $Y_t\ge L_t$  for $t\in [0,T)$. Consequently,
\[
Y_t\ge L_t\mathbf{1}_{\{t<T\}}+\xi\mathbf{1}_{\{t=T\}},\quad t\in[0,T].
\]
Now we are going to show the minimality condition for $K$. Since
$Y_t+\int^t_0 f(s,Y_s,Z_s)\,ds-V_t$, $t\in[0,T]$, is a
supermartingale, it follows from the properties of the Snell
envelope that
\begin{equation}\label{Tw.3.5}
\begin{split}
Y_t\ge \esssup_{\tau\in\Gamma_t}E\Big(\int^{\tau}_t
f(s,Y_s,Z_s)\,ds+\int^{\tau}_t\,dV_s
+L_{\tau}\mathbf{1}_{\{\tau<T\}}
+\xi\mathbf{1}_{\{\tau=T\}}|\mathcal{F}_t\Big).
\end{split}
\end{equation}
If $p>1$ then by Proposition \ref{prop2.5}  and the definition of
$L^n$, for $t\in[0,T]$ we have
\begin{align*}
Y^n_t&=\esssup_{\tau\in\Gamma_t}E\Big(\int^{\tau}_t
f(s,Y^n_s,Z^n_s)\,ds
+\int^{\tau}_t\,dV_s+L^n_{\tau}\mathbf{1}_{\{\tau<T\}}
+\xi\mathbf{1}_{\{\tau=T\}}|\mathcal{F}_t\Big)\\
&\le \esssup_{\tau\in\Gamma_t}E\Big(\int^{\tau}_t
f(s,Y^n_s,Z^n_s)\,ds +\int^{\tau}_t\,dV_s
+L_{\tau}\mathbf{1}_{\{\tau<T\}}
+\xi\mathbf{1}_{\{\tau=T\}}|\mathcal{F}_t\Big).
\end{align*}
Observe that by  (\ref{eq4.6}), (\ref{Tw.3.4}) and the assumptions
on $f$,
\[
E\int_0^T|f(s,Y^n_s,Z^n_s)-f(s,Y_s,Z_s)|\,ds\rightarrow 0.
\]
By Lemma \ref{lem2},
\begin{align*}
Y_t\le \esssup_{\tau\in\Gamma_t}E\Big(\int^{\tau}_t f(s,Y_s,Z_s)\,ds
+\int^{\tau}_t\,dV_s+L_{\tau}\mathbf{1}_{\tau<T}
+\xi\mathbf{1}_{\tau=T}|\mathcal{F}_t\Big).
\end{align*}
By the above inequality and (\ref{Tw.3.5}),
\[
Y_t=\esssup_{\tau\in\Gamma_t}E\Big(\int^{\tau}_t
f(s,Y_s,Z_s)\,ds+\int^{\tau}_t\,dV_s
+L_{\tau}\mathbf{1}_{\tau<T}+\xi\mathbf{1}_{\tau=T}|\mathcal{F}_t\Big).
\]
By Corollary \ref{wn2.1.0} we have the minimality condition  for
$K$. Hence the triple $(Y,Z,K)$ is a solution of RBSDE($\xi,f+dV,
L$) on $[0,T]$.

Consider now the case  $p=1$. Since $Y^1\le Y^n\le Y$, $n\ge 1$,
by (H2) we have
\[
f(t,Y_t,0)\le f(t,Y^n_t,0)\le f(t,Y^1_t,0),\quad t\in [0,T].
\]
Set
\[
\sigma_k=\inf\{t\ge 0;\, \int_0^t|f(s,Y^1_s,0)|\,ds
+\int_0^t|f(s,Y_s,0)|\,ds\ge k\}\wedge T.
\]
It is clear that $\{\sigma_k\}$ is stationary.  We may assume that
$\sigma_k=\tau_k$. By Proposition \ref{prop2.5} and the definition
of $L^n$,
\begin{align*}
Y^n_t&=\esssup_{\tau_k\ge\tau,\tau\in\Gamma_t}E\Big(\int^{\tau}_t
f(s,Y^n_s,Z^n_s)\,ds+\int^{\tau}_t\,dV_s
+L^n_{\tau}\mathbf{1}_{\{\tau<\tau_k\}}
+Y^n_{\tau_k}\mathbf{1}_{\{\tau=\tau_k\}}|\mathcal{F}_t\Big)\\
&\le \esssup_{\tau_k\ge\tau,\tau\in\Gamma_t} E\Big(\int^{\tau}_t
f(s,Y^n_s,Z^n_s)\,ds+\int^{\tau}_t\,dV_s
+L_{\tau}\mathbf{1}_{\{\tau<\tau_k\}}
+Y_{\tau_k}\mathbf{1}_{\{\tau=\tau_k\}}|\mathcal{F}_t\Big).
\end{align*}
Observe that by  (\ref{Tw3.3}), the definition of  $\sigma_k$ and
the assumptions on $f$,
\[
E\int_0^{\tau_k}|f(s,Y^n_s,Z^n_s)-f(s,Y_s,Z_s)|\,ds\rightarrow 0.
\]
By Lemma \ref{lem2},
\begin{align*}
Y_t\le \esssup_{\tau_k\ge\tau,\tau\in\Gamma_t}E\Big(\int^{\tau}_t
f(s,Y_s,Z_s)\,ds
+\int^{\tau}_t\,dV_s+L_{\tau}\mathbf{1}_{\{\tau<\tau_k\}}
+Y_{\tau_k}\mathbf{1}_{\{\tau=\tau_k\}}|\mathcal{F}_t\Big).
\end{align*}
By the above inequality and  (\ref{Tw.3.5}),
\begin{align*}
Y_t=\esssup_{\tau_k\ge\tau,\tau\in\Gamma_t}E\Big(\int^{\tau}_t
f(s,Y_s,Z_s)\,ds+\int^{\tau}_t\,dV_s
+L_{\tau}\mathbf{1}_{\{\tau<\tau_k\}}
+Y_{\tau_k}\mathbf{1}_{\{\tau=\tau_k\}}|\mathcal{F}_t\Big).
\end{align*}
By Corollary \ref{wn2.1.0} we have the minimality  condition for
$K$ on $[0,\tau_k]$, and by stationarity of $\{\tau_k\}$ also on
$[0,T]$. Therefore  $(Y,Z,K)$ is the solution of
RBSDE$(\xi,f+dV,L)$ on $[0,T]$.
\end{proof}

\nsubsection{Appendix. It\^o's formula for processes with
regulated trajectories}

We  consider an $\mathbb{F}$-adapted process $X$ with regulated
trajectories of the form
\begin{equation}\label{eq5.12}
X_t=X^*_t+\sum_{s<t}\Delta^+X_s,\quad t\in[0,T],
\end{equation}
where $X^*$  is an $\mathbb{F}$-adapted semimartingale with
c\`adl\`ag trajectories  and
\[
\sum_{s<T}|\Delta^+X_s|<\infty,\quad P\mbox{-a.s.}
\]
(note that $\Delta^-X_s=\Delta X^*_s$).

\begin{theorem}[\cite{Ga,Le}]
\label{thm1} Let $(X_t)_{t\leq T}$ be an adapted process with
regulated trajectories  of the form \mbox{\rm(\ref{eq5.12})}, and
let $f$ be a real function of class $C^2$. Then the process
$(f(X_t))_{t\leq T}$ also has the form  \mbox{\rm(\ref{eq5.12})}.
More precisely, for every $t\in[0,T]$,
\[
f(X_t)=f(X_0)+ \int_0^tf'(X_{s-})\,dX^*_s
+\frac12\int_0^tf''(X_{s-})\,d[X^*]_s^c+J^-_t+J^+_t,\] where
$\displaystyle{J^-_t=\sum_{s\leq
t}\{f(X_{s})-f(X_{s-})-f'(X_{s-})\Delta^- X_s\}}$,
$\displaystyle{J^+_t=\sum_{s<t}\{f(X_{s+})-f(X_s)\}}$.
\end{theorem}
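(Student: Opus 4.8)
The plan is to reduce the statement to the classical It\^o formula for c\`adl\`ag semimartingales by isolating the right jumps. Write $X=X^*+X^d$, where $X^d_t=\sum_{s<t}\Delta^+X_s$ is the left-continuous, finite-variation pure right-jump part, so that $\Delta^+X^d_t=\Delta^+X_t$ and $X^d$ has no left jumps, while $X^*$ carries the continuous martingale part together with all the left jumps, $\Delta^-X_s=\Delta X^*_s$. Since a regulated trajectory is bounded (see the Preliminaries), along each path $X$ stays in a compact set on which $f,f',f''$ are bounded and Lipschitz; this supplies all the dominations needed below. I would first prove the formula under the assumption that $X^d$ has only finitely many jumps in $[0,t)$, and then pass to the limit.

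For the finite case, enumerate the right-jump times $0=t_0<t_1<\dots<t_m<t=:t_{m+1}$ and set $c_k=\sum_{j\le k}\Delta^+X_{t_j}$ (with $c_0=0$). On each interval $(t_k,t_{k+1}]$ the right-jump part is frozen, $X^d_s\equiv c_k$, so $X_s=X^*_s+c_k$ there; in particular $X_{s-}=X^*_{s-}+c_k$ and $\Delta^-X_s=\Delta X^*_s$, while $X_{t_{k+1}}=X^*_{t_{k+1}}+c_k$ and $X_{t_k+}=X^*_{t_k}+c_k$. Applying the classical It\^o formula to $f$ along the c\`adl\`ag semimartingale $X^*+c_k$ between $t_k$ and $t_{k+1}$ gives
\[
f(X_{t_{k+1}})-f(X_{t_k+})=\int_{t_k}^{t_{k+1}}f'(X_{s-})\,dX^*_s+\tfrac12\int_{t_k}^{t_{k+1}}f''(X_{s-})\,d[X^*]^c_s+\!\!\sum_{t_k<s\le t_{k+1}}\!\!\{f(X_s)-f(X_{s-})-f'(X_{s-})\Delta^-X_s\}.
\]
Summing over $k=0,\dots,m$, the left-hand side telescopes to $f(X_t)-f(X_0)-\sum_{0\le s<t}\{f(X_{s+})-f(X_s)\}=f(X_t)-f(X_0)-J^+_t$, because the internal endpoint differences $f(X_{t_k})-f(X_{t_k+})$ are exactly the negatives of the right-jump increments; the integrals reassemble into $\int_0^t$, and the interval jump sums combine into $J^-_t$. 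Rearranging yields the asserted identity in the finite case.

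For the general case I would truncate, keeping only the right jumps larger than $1/N$: set $X^{d,N}_t=\sum_{s<t}\Delta^+X_s\,\mathbf{1}_{\{|\Delta^+X_s|>1/N\}}$ (finitely many, since $\sum_{s<T}|\Delta^+X_s|<\infty$) and $X^N=X^*+X^{d,N}$. Then $X^N\to X$ uniformly on $[0,T]$, hence $X^N_{s-}\to X_{s-}$ uniformly as well, and the finite-case identity holds for every $X^N$. It then remains to pass to the limit in the four terms. Here $f(X^N_t)\to f(X_t)$ by continuity, while the $dX^*$ and $d[X^*]^c$ integrals converge because $f'(X^N_{s-})$ and $f''(X^N_{s-})$ are uniformly bounded and converge uniformly to $f'(X_{s-})$, $f''(X_{s-})$ (dominated convergence for the Stieltjes integral and the dominated convergence theorem for stochastic integrals against $X^*=M+A$). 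For the series, Taylor's theorem bounds the $J^-$-summands by $\tfrac12\|f''\|_\infty|\Delta X^*_s|^2$, summable since $\sum_s|\Delta X^*_s|^2\le[X^*]_T<\infty$, and the $J^+$-summands by a constant times $|\Delta^+X_s|$, summable by hypothesis; dominated convergence for series then gives $J^{-,N}_t\to J^-_t$ and $J^{+,N}_t\to J^+_t$.

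The main obstacle is precisely this limiting step: one must exhibit summable, $N$-independent dominations for both jump series and deduce convergence of the stochastic integral from the uniform convergence of the integrands. A secondary technical point is that splitting the stochastic integral at the right-jump times in the finite case requires those times to be stopping times; this can be arranged exactly as in the penalization scheme of Section~\ref{sec4}, where the right jumps of $L$ and $V$ are exhausted by arrays of stopping times, or else circumvented by localization. Once these convergences are established, letting $N\to\infty$ in the finite-case identity yields the formula, and the representation of $f(X)$ in the form (\ref{eq5.12}) follows by reading off its c\`adl\`ag part (the first three terms together with $J^-$) and its right-jump part ($J^+$).
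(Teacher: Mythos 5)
Your argument is correct in substance, but it follows a genuinely different and much heavier route than the paper's. The paper needs no finite-jump reduction and no limit passage: it sets $X^+_t=X_{t+}=X^*_t+\sum_{s\le t}\Delta^+X_s$, notes that $X^+$ is an ordinary c\`adl\`ag semimartingale (adaptedness of its pure-jump part is automatic because the augmented Brownian filtration is right-continuous, so $X_{t+}$ is $\mathcal{F}_t$-measurable), applies the classical It\^o formula once to $X^+$, and rewrites the result using $X^+_{s-}=X_{s-}$, $\Delta X^+_s=\Delta^-X_s+\Delta^+X_s$ and $f(X^+_s)=f(X_s)+\{f(X_{s+})-f(X_s)\}$; subtracting $f(X_{t+})-f(X_t)$ from both sides then yields the stated identity for every $X$ of the form (\ref{eq5.12}) at once. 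Your telescoping over inter-jump intervals reconstructs by hand exactly what this single application of It\^o's formula gives, except that the paper's version never uses finiteness of the right jumps, so your entire truncation stage (the dominations of the two jump series and the stochastic dominated convergence for $\int_0^t f'(X^N_{s-})\,dX^*_s$) is avoidable work. Your route also carries the measurability costs: to make sense of $\int_0^t f'(X^N_{s-})\,dX^*_s$ you need $X^{d,N}$ adapted, hence the right-jump times must be stopping times; this does hold, via the d\'ebut theorem under the usual conditions (the construction in Section \ref{sec4} relies on the same fact), but your alternative suggestion that the issue can be ``circumvented by localization'' is not accurate---stopping in time does not create the missing measurability. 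Similarly, applying It\^o's formula ``along $X^*+c_k$'' between $t_k$ and $t_{k+1}$ needs the remark that $c_k$ is $\mathcal{F}_{t_k}$-measurable, so that $X^*+c_k$ is a semimartingale for the filtration restarted at the stopping time $t_k$. All of these points are reparable, so your proposal stands as a proof plan; but each of them simply evaporates in the paper's one-step reduction via the right-continuous regularization $X_{t+}$, which is the reason that proof works with $X_{t+}$ rather than with the jump times themselves.
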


Note that the two sums defining $J^-$  and $J^+$ are absolutely
convergent, and that $J^-$ is a c\`adl\`ag adapted process,
whereas $J^+$ is c\`agl\`ad adapted.  Indeed,
\[
|J^-|_t\leq C_1\sum_{s\leq t}|\Delta^-X_s|^2=C_1\sum_{s\leq
t}|\Delta X^*_s|^2, \quad P\mbox{-a.s.}
\]
and
\[
|J^+|_t\leq C_2\sum_{s<t}|\Delta^+X_s|,\quad P\mbox{-a.s.},
\]
where $C_1,C_2$  are random variables defined by
$C_1=(1/2)\sup_{x\in[-M,M]}|f''(x)|$ and
$C_2=\sup_{x\in[-M,M]}|f'(x)|$, where $M=\sup_{s\leq T}|X_s|$
(note that $M <\infty$ $P$-a.s.) We  include the proof of Theorem
\ref{thm1} for completeness of our presentation.
\begin{proof} Set  $X^+_t=X_{t+}$, $t\leq T$. Clearly
\[
X^+_t=\Delta^+X_t+X_t=X^*_t+\sum_{s\leq t}\Delta^+X_s,\quad t\leq T.
\]
Hence $X^+$ is a semimartingale . By It\^o's formula for
semimartingales,
\begin{align*}
f(X^+_t)&=f(X_0)+
\int_0^tf'(X^+_{s-})\,dX^+_s+\frac12\int_0^tf''(X^+_{s-})\,d[X^*]_s^c\\
&\quad+\sum_{s\leq t} \{f(X^+_{s})-f(X^+_{s-})-f'(X^+_{s-})\Delta
X^+_s\}.
\end{align*}
Observe that $X^+_{s-}=X_{s-}$\,,
$f(X^+_s)=f(X_s)+f(X_{s+})-f(X_s)$ and $\Delta
X^+_s=\Delta^+X_s+\Delta^-X_s$. Hence
 \begin{align}
 \label{eq5.2}
f(X^+_t)&=f(X_0)+
\int_0^tf'(X_{s-})\,dX^*_s+\sum_{s\leq t}f'(X_{s-})\Delta^+X_s
+\frac12\int_0^tf''(X^+_{s-})\,d[X^*]_s^c\nonumber\\
&\quad+\sum_{s\leq t}
\{f(X_{s+})-f(X_{s-})-f'(X_{s-})(\Delta^+ X_s+\Delta^-X_s)\}\nonumber\\
&=f(X_0)+\int_0^tf'(X_{s-})\,dX^*_s +\frac12\int_0^tf''(X^+_{s-})\,d[X^*]_s^c
\nonumber\\
&\quad+\sum_{s\leq t}
\{f(X_{s})-f(X_{s-})-f'(X_{s-})\Delta^-X_s\}+\sum_{s\leq t}
\{f(X_{s+})-f(X_{s})\}.
\end{align}
Subtracting $f(X_{t+})-f(X_{t})$ from both sides of (\ref{eq5.2})
we obtain the desired formula.
\end{proof}

\begin{corollary}\label{cor1}
Let $X=(X^1,\dots,X^d)$ be an adapted $d$-dimensional process with
regulated trajectories of the form \mbox{\rm(\ref{eq5.12})} and
let $f:\Rd\to \R$ is a function of class $C^2$. Then the process
$(f(X_t))_{t\leq T}$ also has the form  \mbox{\rm(\ref{eq5.12})}.
Moreover, for every $t\in[0,T]$,
\begin{align*}
f(X_t)&=f(X_0)+
\sum_{i=1}^d\int_0^t\frac{\partial f}{\partial x_i}(X_{s-})\,dX^{i,*}_s\\
&\quad+\frac12\sum_{i=1}^d\sum_{j=1}^d\int_0^t\frac{\partial^2
f}{\partial x_i\partial
x_j}(X_{s-})\,d[X^{i,*},X^{j,*}]_s^c+J^-_t+J^+_t,\end{align*}
where $\displaystyle{J^-_t=\sum_{s\leq
t}\{f(X_{s})-f(X_{s-})-\sum_{i=1}^d\frac{\partial f}{\partial
x_i}(X_{s-})\Delta^- X^i_s\}}$,
$\displaystyle{J^+_t=\sum_{s<t}\{f(X_{s+})-f(X_s)\}}$.
\end{corollary}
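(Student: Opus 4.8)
The plan is to follow the proof of Theorem \ref{thm1} line by line, replacing the scalar It\^o formula for c\`adl\`ag semimartingales with its multidimensional counterpart. First I would pass to the right-continuous modification $X^+_t=X_{t+}$, understood componentwise as $X^{+,i}_t=X^i_{t+}$. Exactly as in the scalar case, each component satisfies $X^{+,i}_t=X^{i,*}_t+\sum_{s\le t}\Delta^+X^i_s$, so that $X^+=(X^{+,1},\dots,X^{+,d})$ is a $d$-dimensional c\`adl\`ag semimartingale and the classical multidimensional It\^o formula applies to $f(X^+_t)$.

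Next I would substitute into that formula the three identifications that made the scalar proof work, now read componentwise: $X^+_{s-}=X_{s-}$ (left limits of the right-continuous modification coincide with those of $X$); $\Delta X^{+,i}_s=\Delta^+X^i_s+\Delta^-X^i_s$ (the total jump of the modification splits into the right and left jumps of $X$); and $f(X^+_s)=f(X_{s+})$, which I would rewrite as $f(X_{s+})-f(X_{s-})=\{f(X_s)-f(X_{s-})\}+\{f(X_{s+})-f(X_s)\}$. I would also use the decomposition $X^{+,i}=X^{i,*}+\sum_{s\le\cdot}\Delta^+X^i_s$ to split the first-order stochastic integral $\int_0^t\partial_i f(X_{s-})\,dX^{+,i}_s$ into the c\`adl\`ag semimartingale integral $\int_0^t\partial_i f(X_{s-})\,dX^{i,*}_s$ plus the discrete sum $\sum_{s\le t}\partial_i f(X_{s-})\Delta^+X^i_s$. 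Reorganizing the resulting jump terms exactly as in display \mbox{\rm(\ref{eq5.2})} separates them into the part $J^-_t$ carrying the left jumps and a part carrying the right jumps.

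The only genuinely new bookkeeping in the multidimensional setting concerns the second-order term: I would verify that the continuous part of the matrix bracket satisfies $[X^{+,i},X^{+,j}]^c=[X^{i,*},X^{j,*}]^c$, since the purely right-jumping components contribute only pure jumps to the quadratic covariation and hence nothing to its continuous part. This identification, together with $X^+_{s-}=X_{s-}$, turns the Hessian term into $\frac12\sum_{i,j}\int_0^t\partial^2_{ij}f(X_{s-})\,d[X^{i,*},X^{j,*}]^c_s$, as required. The absolute convergence of the two jump series is inherited from the scalar estimates: on the compact random range $[-M,M]^d$ with $M=\sup_{s\le T}|X_s|<\infty$, the $C^2$ bounds on $f$ control $J^-$ by $\sum_{s\le t}|\Delta^-X_s|^2=\sum_{s\le t}|\Delta X^*_s|^2<\infty$ and the right-jump series by $\sum_{s<t}|\Delta^+X_s|<\infty$, the latter finite by hypothesis \mbox{\rm(\ref{eq5.12})}.

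Finally, I would subtract $f(X_{t+})-f(X_t)$ from both sides, which converts $f(X^+_t)=f(X_{t+})$ back to $f(X_t)$ and removes the right-jump contribution at the terminal time $t$; what remains of the right-jump sum is precisely $J^+_t=\sum_{s<t}\{f(X_{s+})-f(X_s)\}$. Reading off the terms, everything through $J^-_t$ is a c\`adl\`ag semimartingale while $J^+_t$ is its purely right-jumping part, so $f(X)$ indeed has the form \mbox{\rm(\ref{eq5.12})}, and the displayed identity is the asserted formula. Since the argument is a componentwise transcription of the scalar proof, no step presents a serious obstacle; the point requiring the most care is confirming that the right-jumping parts do not enter the continuous bracket, so that the Hessian term is driven by $[X^{i,*},X^{j,*}]^c$ alone.
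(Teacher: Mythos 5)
Your proposal is correct and is exactly the argument the paper intends: Corollary \ref{cor1} is stated without proof as the immediate multidimensional extension of Theorem \ref{thm1}, and your line-by-line transcription (passing to $X^+$, applying the classical vector It\^o formula, splitting the first-order integral along $X^{+,i}=X^{i,*}+\sum_{s\le\cdot}\Delta^+X^i_s$, regrouping jumps, and subtracting $f(X_{t+})-f(X_t)$) is precisely that proof. Your explicit verification that $[X^{+,i},X^{+,j}]^c=[X^{i,*},X^{j,*}]^c$ makes rigorous a point the paper's scalar proof uses silently, which is a welcome addition rather than a deviation.
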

\begin{corollary}\label{cor2}
Let  $X^1, X^2$ be two adapted processes with regulated
trajectories of the form \mbox{\rm(\ref{eq5.12})}. Then
\begin{align*}
X^1_tX^2_t&=X^1_0X^2_0+\int_0^tX^1_{s-}\,dX^{2,*}_s
+\int_0^t X^2_{s-}\,dX^{1,*}_s+[X^{1,*},X^{2,*}]_t\\
&\quad+\sum_{s<t}(X^1_{s+}X^2_{s+}-X^1_sX^2_s),\quad t\in[0,T].
\end{align*}
\end{corollary}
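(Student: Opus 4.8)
The plan is to derive this as the single special case of Corollary \ref{cor1} corresponding to the product function, so that the statement becomes a purely algebraic consequence of the multidimensional It\^o formula already proved. First I would set $X=(X^1,X^2)$, which by hypothesis has regulated trajectories of the form \mbox{(\ref{eq5.12})}, and apply Corollary \ref{cor1} with $d=2$ and $f:\R^2\to\R$ given by $f(x_1,x_2)=x_1x_2$. This $f$ is of class $C^2$, with $\partial f/\partial x_1=x_2$, $\partial f/\partial x_2=x_1$, $\partial^2 f/\partial x_1\partial x_2=\partial^2 f/\partial x_2\partial x_1=1$ and $\partial^2 f/\partial x_i^2=0$. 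Substituting these into Corollary \ref{cor1} immediately produces the two stochastic integrals $\int_0^t X^2_{s-}\,dX^{1,*}_s$ and $\int_0^t X^1_{s-}\,dX^{2,*}_s$, while the second-order double sum collapses: the two off-diagonal terms $(i,j)=(1,2)$ and $(2,1)$ each contribute $\frac12\,d[X^{1,*},X^{2,*}]^c_s$, which by symmetry of the covariation add up to the single continuous term $[X^{1,*},X^{2,*}]^c_t$.

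Next I would identify the two jump corrections supplied by Corollary \ref{cor1}. The right-jump part is immediate, since $J^+_t=\sum_{s<t}\{f(X_{s+})-f(X_s)\}=\sum_{s<t}(X^1_{s+}X^2_{s+}-X^1_sX^2_s)$, which is exactly the last sum appearing in the statement and needs no further manipulation.

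The one step carrying genuine content is the left-jump term $J^-$, and this is where I expect the only real bookkeeping. Writing out $J^-_t=\sum_{s\le t}\{f(X_s)-f(X_{s-})-\sum_{i=1}^2(\partial f/\partial x_i)(X_{s-})\Delta^- X^i_s\}$, expanding $f(X_s)-f(X_{s-})=X^1_sX^2_s-X^1_{s-}X^2_{s-}$ and the first-order term $X^2_{s-}\Delta^-X^1_s+X^1_{s-}\Delta^-X^2_s$, and cancelling, the summand reduces algebraically to $(X^1_s-X^1_{s-})(X^2_s-X^2_{s-})=\Delta^-X^1_s\,\Delta^-X^2_s$. Since the decomposition \mbox{(\ref{eq5.12})} gives $\Delta^-X^i_s=\Delta X^{i,*}_s$, this yields $J^-_t=\sum_{s\le t}\Delta X^{1,*}_s\,\Delta X^{2,*}_s$.

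Finally I would invoke the standard decomposition of the quadratic covariation of two c\`adl\`ag semimartingales, $[X^{1,*},X^{2,*}]_t=[X^{1,*},X^{2,*}]^c_t+\sum_{s\le t}\Delta X^{1,*}_s\,\Delta X^{2,*}_s$, which is legitimate because $X^{1,*}$ and $X^{2,*}$ are c\`adl\`ag semimartingales by \mbox{(\ref{eq5.12})}. Adding the continuous covariation obtained in the first step to $J^-_t$ from the previous step recombines precisely into the full covariation $[X^{1,*},X^{2,*}]_t$, and collecting all terms gives the asserted identity. The whole argument is essentially algebraic once Corollary \ref{cor1} is available; the only points to watch are the cancellation producing $\Delta^-X^1_s\Delta^-X^2_s$ in $J^-$ and the recombination of the continuous covariation with this jump sum into $[X^{1,*},X^{2,*}]$.
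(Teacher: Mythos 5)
Your proposal is correct and follows exactly the route the paper intends: Corollary \ref{cor2} is stated there without proof precisely because it is the special case of Corollary \ref{cor1} with $d=2$ and $f(x_1,x_2)=x_1x_2$, which is what you carry out. Your bookkeeping — the cancellation in $J^-$ giving $\sum_{s\le t}\Delta X^{1,*}_s\Delta X^{2,*}_s$ via $\Delta^-X^i_s=\Delta X^{i,*}_s$, and its recombination with $[X^{1,*},X^{2,*}]^c$ into the full covariation — is accurate, so nothing is missing.
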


\begin{corollary}\label{cor3}
Let $X=(X^1,\dots,X^d)$ be an adapted  $d$-dimensional process
with regulated trajectories of the form \mbox{\rm(\ref{eq5.12})}.
Then for all $p\geq1$  and  $t\in[0,T]$,
\begin{align*}|X_t|^p=&|X_0|^p+
p\int_0^t|X_{s-}|^{p-1}\langle \hat\sgn( X_{s-}),\,dX^{*}_s\rangle
+p\sum_{s<t}|X_{s}|^{p-1}\langle \hat\sgn(X_{s}),\,\Delta^+ X_s\rangle\\
&+\frac{p}{2}\int_0^t |X_{s}|^{p-2}{\bf 1}_{\{X_{s}\neq0\}}\{(2-p)
|(1-\langle \hat\sgn(X_{s}),Q^{X^{*}}_s
\hat\sgn(X_{s})\rangle)+(p-1)\} d[X^{*}]_s^c\\
&+L_t{\bf 1}_{\{p=1\}} + J^-_t(p)+J^+_t(p),
\end{align*} where
$Q^{X^{*}}$ denotes the Radon-Nikodym derivative
$d[[X^\star]]^c/d[X^\star]^c$, $(L_t)_{t\leq T}$  is an adapted
increasing continuous process such that $L_0=0$, and
\[J^-_t(p)=\sum_{s\leq
t}\{|X_{s}|^p-|X_{s-}|^p-p|X_{s-}|^{p-1}\langle
\hat\sgn(X_{s-}),\,\Delta^- X_s\rangle\},\quad t\in[0,T]
\]
and
\[
J^+_t(p)=\sum_{s< t}
\{|X_{s+}|^p-|X_{s}|^p-p|X_{s}|^{p-1}\langle\hat\sgn(X_{s}),\,\Delta^+
X_s\rangle\},\quad t\in[0,T]
\]
are adapted increasing processes with c\`adl\`ag and c\`agl\`ad
trajectories, respectively.
\end{corollary}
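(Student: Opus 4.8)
The plan is to obtain the formula by specializing Corollary \ref{cor1} to $\phi(x)=|x|^p$. For $p\ge 2$ this function is of class $C^2$ on all of $\Rd$: its gradient $\nabla\phi(x)=p|x|^{p-1}\hat\sgn(x)$ and its Hessian $\partial_{ij}\phi(x)=p|x|^{p-2}\delta_{ij}+p(p-2)|x|^{p-4}x_ix_j$ extend continuously across the origin. I would therefore apply Corollary \ref{cor1} directly. Using the very definition of $Q^{X^*}$, namely $d[X^{i,*},X^{j,*}]^c_s=Q^{X^*,ij}_s\,d[X^*]^c_s$, the continuous second-order term of Corollary \ref{cor1} becomes $\tfrac{p}{2}|X_{s-}|^{p-2}\{1+(p-2)\langle\hat\sgn(X_{s-}),Q^{X^*}_s\hat\sgn(X_{s-})\rangle\}\,d[X^*]^c_s$, which is exactly the stated integrand once one rewrites $1+(p-2)\langle\cdot\rangle=(2-p)(1-\langle\cdot\rangle)+(p-1)$; the sums $J^-_t,J^+_t$ of Corollary \ref{cor1} are precisely $J^-_t(p),J^+_t(p)$, and no $L$ term occurs since $p\neq 1$.

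The substance of the proof is the case $1\le p<2$, where $\phi$ fails to be $C^2$ at the origin. Here I would regularize by $\phi_\varepsilon(x)=(|x|^2+\varepsilon^2)^{p/2}$, which is $C^\infty$ and convex (its Hessian is positive definite for $p\ge 1$), apply Corollary \ref{cor1} to each $\phi_\varepsilon$, and let $\varepsilon\downarrow 0$. All terms except the continuous second-order integral pass to the limit routinely: the left-hand side tends pointwise to $|X_t|^p$; since $|\nabla\phi_\varepsilon(x)|\le p(|x|^2+\varepsilon^2)^{(p-1)/2}$ is dominated along the bounded trajectory, the first-order stochastic integral converges by dominated convergence, its integrand tending to $p|X_{s-}|^{p-1}\hat\sgn(X_{s-})$ (the value at $X_{s-}=0$ being $0$ when $p>1$, and controlled by the convention in $\sgn$ when $p=1$); and because $\phi_\varepsilon$ and $\phi$ are convex, each summand of $J^\pm_\varepsilon$ is nonnegative and dominated, so by summability of the jumps $J^\pm_\varepsilon\to J^\pm(p)$, with $J^-(p)$ and $J^+(p)$ increasing.

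The main obstacle is the limit of the continuous second-order term $\tfrac12\int_0^t\mathrm{Hess}\,\phi_\varepsilon(X_{s-})\,{:}\,d[[X^*]]^c_s$, where $X^*$ is a semimartingale by the assumed form \mbox{\rm(\ref{eq5.12})}. I would invoke the occupation-times/local-time machinery for the continuous part of $X^*$: rewriting this integral against the local times $\{L^a\}$ of $X^*$. For $1<p<2$ the density $|a|^{p-2}$ is locally integrable, so $\phi_\varepsilon''\,da$ converges to $p(p-1)|a|^{p-2}\,da$ with no atom at the origin; since the singleton $\{0\}$ is Lebesgue-null, the insertion of $\mathbf{1}_{\{X_s\ne 0\}}$ costs nothing and no residual local-time term survives. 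For $p=1$, by contrast, the mass of $\phi_\varepsilon''$ concentrates at the origin, and this near-origin part—being an integral of the positive-semidefinite Hessians of the convex $\phi_\varepsilon$ against the positive-semidefinite measure $d[[X^*]]^c$—converges to the continuous increasing adapted process $L_t\mathbf{1}_{\{p=1\}}$. Finally, monotonicity of the displayed continuous integrand follows from $Q^{X^*}$ being symmetric, positive semidefinite with unit trace, whence $\langle\hat\sgn,Q^{X^*}\hat\sgn\rangle\in[0,1]$ and, for $1\le p\le 2$, the coefficient $(2-p)(1-\langle\cdot\rangle)+(p-1)\ge 0$; together with the convexity argument for $J^\pm(p)$ this gives the asserted c\`adl\`ag/c\`agl\`ad monotonicity of the three remainder processes.
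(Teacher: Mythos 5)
Your treatment of $p\ge 2$ and your choice of regularization $\phi_\varepsilon(x)=(|x|^2+\varepsilon^2)^{p/2}=u^p_\varepsilon(x)$ coincide with the paper's proof, and the convergence of the left-hand side, of the first-order stochastic integral, and of the jump sums $J^{\pm}_\varepsilon$ is handled there essentially as you describe. The gap is in the step you yourself single out as the main obstacle: for $p\in[1,2)$ you propose to control the second-order term by rewriting it against ``the local times $\{L^a\}$ of $X^*$'' via the occupation-times formula. That machinery is intrinsically one-dimensional, while the statement concerns a $d$-dimensional process $X=(X^1,\dots,X^d)$. For $d\ge 2$ there is no local time at points and no occupation density: the occupation measure $A\mapsto\int_0^t{\bf 1}_A(X_s)\,d[X^{*}]^c_s$ is in general singular with respect to Lebesgue measure on $\Rd$ (for planar Brownian motion, for instance, the path has zero area), so the assertions ``$\phi_\varepsilon''\,da$ converges to $p(p-1)|a|^{p-2}\,da$, hence no residual term for $1<p<2$'' and ``the mass concentrates at the origin, giving $L$ for $p=1$'' have no meaning in dimension $d\ge 2$. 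A secondary defect: even for $d=1$ the occupation formula applies to the semimartingale $X^{*}$, whereas your integrand is evaluated at $X_{s}$, which differs from $X^{*}_s$ by the accumulated right-side jumps $\sum_{u<s}\Delta^+X_u$; since the right jumps may be dense in time, one cannot reduce to the classical formula interval by interval.

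The paper avoids local times altogether. After splitting the second-order term as $I^{6,\varepsilon}+I^{7,\varepsilon}+I^{8,\varepsilon}$ by means of the identity $u_\varepsilon^{p-2}(x)=u^{p-4}_\varepsilon(x)|x|^2+\varepsilon^2u_\varepsilon^{p-4}(x)$, it passes to the limit in $I^{6,\varepsilon},I^{7,\varepsilon}$ by monotone convergence (using positive semidefiniteness of $Q^{X^{*}}$, as in your last paragraph), and then treats the remainder $I^{4,\varepsilon}+I^{8,\varepsilon}$ as follows: by convexity of $u^p_\varepsilon$ these processes are increasing, hence their limit $B$ is an increasing regulated process; comparing the jumps of the two sides of the resulting identity identifies $\sum_{s\le t}\Delta^-B_s$ with $J^-_t(p)$ and shows $\Delta^+B=0$, so the continuous part of $B$ is precisely the process $L$; finally, the vanishing of $L$ for $p>1$ is obtained by invoking the (dimension-free) argument of \cite[Lemma 2.2]{bdh}, not by a Lebesgue-integrability argument for $|a|^{p-2}$. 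To complete your proof you would have to replace the occupation-density step by an argument of this kind.
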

\begin{proof} We follow the proof of \cite[Lemma 2.2]{bdh}
(see also the proof of \cite[Proposition 2.1]{kl}).  The  formula
is an easy consequence of Corollary \ref{cor1} in the case where
$p\geq2$. Assume that $p\in[1,2)$ and for  $\epsilon>0$ set
$u_\epsilon(x)=(|x|^2+\epsilon^2)^{1/2}$, $x\in\Rd$. Clearly,
$u_\epsilon^p$ is a smooth approximation of $|\cdot|^p$. It is
easy to check that $\frac{\partial u_\epsilon^p}{\partial
x_i}(x)=pu_\epsilon^{p-2}(x)x_i$ for $i=1,\dots,d$, $x\in\Rd$,
and
\[
\frac{\partial^2 u^p_\epsilon}{\partial x_i\partial x_j}(x)
=p(p-2)u_\epsilon^{p-4}(x)x_ix_j+pu_\epsilon^{p-2}(x){\bf
1}_{\{i=j\}}, \quad i,j=1,\dots,d,\,x\in\Rd.\] By Corollary
\ref{cor1},
\begin{align*}
u_\epsilon^p(X_t)&=u_\epsilon^p(X_0)
+p\int_0^tu_\epsilon^{p-2}(X_{s-})\langle X_{s-},dX^{*}\rangle
+p\sum_{s<t}u_\epsilon^{p-2}(X_{s})\langle X_s,\Delta^+X_s\rangle\\
&\quad+\frac12\sum_{i=1}^d\sum_{j=1}^d
\int_0^t\{p(p-2)u^{p-4}_\epsilon(X_s)X^i_sX^j_s
+pu_\epsilon^{p-2}(X_s){\bf 1}_{\{i=j\}}\}d[X^{i,*},X^{j,*}]^c_s\\
&\quad+\sum_{s\leq t}
\{u^p_\epsilon(X_{s})-u^p_\epsilon(X_{s-})-pu_\epsilon^{p-2}(X_{s-})\langle
X_{s-}, \Delta^- X_s\rangle\}\\
&\quad+\sum_{s< t}
\{u^p_\epsilon(X_{s+})-u^p_\epsilon(X_{s})-pu_\epsilon^{p-2}(X_{s})\langle
X_s, \Delta^+X_s\rangle\}\\
&=:u_\epsilon^p(X_0)+I^{1,\epsilon}_t+I^{2,\epsilon}_t+I^{3,\epsilon}_t
+I^{4,\epsilon}_t+I^{5,\epsilon}_t,
\end{align*}
where using (\ref{eq5.12}) we separated $I^{2,\epsilon}$ from
right side jumps $J^+$. Since $u^p_\epsilon(x)\to|x|^p$ $x\in\Rd$,
it is clear that
\begin{equation}
\label{eq5.4} u^p_\epsilon(X_t)\rightarrow|X_t|^p,\quad t\in[0,T],
\quad P\mbox{-a.s.}
\end{equation}
Moreover, the convergence
$u_\epsilon^{p-2}(x)x\to|x|^{p-1}\hat\sgn(x)$, $x\in\Rd$ implies
that
\begin{equation}\label{eq5.5}
I^{1,\epsilon}_t\arrowp p\int_0^t|X_{s-}|^{p-1}\langle \hat\sgn(
X_{s-}),\,dX^{*}_s\rangle,\quad t\in[0,T]\end{equation} and, by
(\ref{eq5.12}), that
\begin{equation}\label{eq5.6}
I^{2,\epsilon}_t\rightarrow p\sum_{s<t}|X_{s-}|^{p-1}\langle
\hat\sgn( X_{s}),\Delta^+X_s\rangle,\quad t\in[0,T],\quad
P\mbox{-a.s.}
\end{equation}
Similarly,
\begin{equation}\label{eq5.7}
I^{5\epsilon}_t\rightarrow J^+_t,\quad t\in[0,T],\quad
P\mbox{-a.s.}
\end{equation} On the other hand,
using the identity
$u_\epsilon^{p-2}(x)=u^{p-4}_\epsilon(x)|x|^2+\epsilon^2u_\epsilon^{p-4}(x)$
we get
\begin{align*}
I^{3,\epsilon}_t&=\frac12\sum_{i=1}^d\sum_{j=1}^d
\int_0^t\{p(p-2)u^{p-4}_\epsilon(X_s)X^i_sX^j_s
+pu_\epsilon^{p-4}(X_s)|X_s|^2{\bf 1}_{\{i=j\}}\}d[X^{i,*},X^{j,*}]^c_s\\
&\quad+\frac12\sum_{i=1}^d\sum_{j=1}^d
\int_0^tp\epsilon^2u_\epsilon^{p-4}(X_s){\bf 1}_{\{i=j\}}\}d[X^{i,*},X^{j,*}]^c_s\\
&=\frac12p\sum_{i=1}^d\sum_{j=1}^d
\int_0^t(2-p)u^{p-4}_\epsilon(X_s)|X_s|^2
\big({\bf 1}_{\{i=j\}}-\frac{X^i_s}{|X_s|}
\frac{X^j_s}{|X_s|}\big){\bf 1}_{\{X_s\neq0\}}d[X^{i,*},X^{j,*}]^c_s\\
&\quad+\frac12p\sum_{i=1}^d
\int_0^t(p-1)u_\epsilon^{p-4}(X_s)|X_s|^2d[X^{i,*}]^c_s+
\frac{p}{2}\sum_{i=1}^d\int_0^t
\epsilon^2u_\epsilon^{p-4}(X_s)d[X^{i,*}]^c_s\\
&=\frac{p}{2}\sum_{i=1}^d\sum_{j=1}^d
\int_0^t(2-p)u^{p-4}_\epsilon(X_s)|X_s|^2\big({\bf 1}_{\{i=j\}}
-\frac{X^i_s}{|X_s|}\frac{X^j_s}{|X_s|}\big)
Q^{X^\star}_s(i,j){\bf 1}_{\{X_s\neq0\}}d[X^\star]^c_s\\
&\quad+\frac{p}{2}\int_0^t(p-1)u_\epsilon^{p-4}(X_s)|X_s|^2d[X^{*}]^c_s+
\frac{p}{2}\int_0^t\epsilon^2u_\epsilon^{p-4}(X_s)d[X^{*}]^c_s\\
&=:I^{6,\epsilon}_t+I^{7,\epsilon}_t+I^{8,\epsilon}_t.
\end{align*}
Since $Q^{X^\star}_s$  is a symmetric non-negative matrix with  a trace equal to 1,
\begin{align}
\nonumber\sum_{i=1}^d\sum_{j=1}^d&\big({\bf 1}_{\{i=j\}}
-\frac{X^i_s}{|X_s|}\frac{X^j_s}{|X_s|}\big)
Q^{X^\star}_s(i,j){\bf 1}_{\{X_s\neq0\}}\\
&=(1-\langle\hat\sgn(X_s),Q^{X^\star}_s\hat\sgn(X_s)\rangle){\bf
1}_{\{X_s\neq0\}}\geq0,\quad s\in[0,T].\label{eq5.8}
\end{align}
By this and the fact that $|x|/u_\epsilon(x)\nearrow{\bf
1}_{\{x\neq0\}}$, $x\in\Rd$, it follows that for $t\in[0,T]$,
\begin{equation}
\label{eq5.9}
I^{6,\epsilon}_t\nearrow\frac12p\int_0^t(2-p)|X_s|^{p-2}
(1-\langle\hat\sgn(X_s),Q^{X^\star}_s\hat\sgn(X_s)\rangle){\bf
1}_{\{X_s\neq0\}}d[X^\star]_s^c
\end{equation}
$P$-a.s. Similarly,
\begin{equation}\label{eq5.10}
I^{7,\epsilon}_t\nearrow\frac12p\int_0^t(p-1)|X_s|^{p-2}{\bf
1}_{\{X_s\neq0\}}d[X^\star]_s^c,\quad t\in[0,T],\quad
P\mbox{-a.s.}
\end{equation}
From (\ref{eq5.4})--(\ref{eq5.10}) we deduce that there is a
process $B$  with regulated trajectories such that
$I^{4,\epsilon}_t+I^{8,\epsilon}_t\rightarrow B_t$ in probability
$P$ for $t\in[0,T]$, and
\begin{align}
\label{eq5.11} |X_t|^p&=|X_0|^p+ p\int_0^t|X_{s-}|^{p-1}\langle
\hat\sgn( X_{s-}),\,dX^{*}_s\rangle
+p\sum_{s<t}|X_{s}|^{p-1}\langle
\hat\sgn(X_{s}),\,\Delta^+ X_s\rangle\nonumber\\
&+\frac12 p\int_0^t |X_{s}|^{p-2}{\bf 1}_{\{X_{s}\neq0\}}
\{(2-p) |(1-\langle \hat\sgn(X_{s}),Q^{X^{*}}_s
\hat\sgn(X_{s})\rangle)+(p-1)\} d[X^{*}]_s^c\nonumber\\
&+B_t +J^+_t(p).
\end{align}
Since the function $u^p_\epsilon$ is convex, the processes
$I^{8,\epsilon}, I^{4,\epsilon}$ are increasing. It follows that
$B$ is also increasing. Moreover, $B_0=0$ and $B_t=L_t+\sum_{s\leq
t}\Delta^-B_s+\sum_{s<t}\Delta^+B_s$, where $L$ is the continuous
part of $B$. Comparing the jumps of the left and right-hand side
of (\ref{eq5.11}) we obtain that $\sum_{s\leq
t}\Delta^-B_s=J^-_t(p)$ and $\sum_{s<t}\Delta^+B_s=0$. Moreover,
it follows from the arguments from the proof of \cite[Lemma
2.2]{bdh} that $L=0$ in the case where $p>1$.
\end{proof}
\begin{corollary}\label{cor4} Let $X=(X^1,\dots,X^d)$ be an adapted
$d$-dimensional process with regulated trajectories of the form
\mbox{\rm(\ref{eq5.12})}. Then for all $p\in[1,2]$  and
$t\in[0,T]$,
\begin{align*}
|X_t|^p&+\frac{p(p-1)}2\int_0^t |X_{s}|^{p-2}{\bf
1}_{\{X_{s}\neq0\}} d[X^{*}]_s^c +J^-_T(p)-J^-_t(p)
+J^+_T(p)-J^+_t(p)\\
&\leq
 |X_T|^p+ p\int_t^T|X_{s-}|^{p-1}\langle
\hat\sgn( X_{s-}),\,dX^{*}_s\rangle +p\sum_{t\leq
s<T}|X_{s}|^{p-1}\langle \hat\sgn(X_{s}),\,\Delta^+
X_s\rangle.\end{align*}
\end{corollary}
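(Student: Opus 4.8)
The plan is to obtain the inequality directly from the Itô-type identity of Corollary \ref{cor3}, applied on the interval $[t,T]$, by discarding two terms of favourable sign. Since Corollary \ref{cor3} already holds $P$-almost surely and pathwise for each fixed time, no further approximation or localization is needed: the argument is a rearrangement followed by a few elementary sign observations. Concretely, I would evaluate the formula of Corollary \ref{cor3} at $T$ and at $t$ and subtract, which gives, on $[t,T]$,
\begin{align*}
|X_T|^p-|X_t|^p&=p\int_t^T|X_{s-}|^{p-1}\langle\hat\sgn(X_{s-}),\,dX^*_s\rangle
+p\sum_{t\le s<T}|X_s|^{p-1}\langle\hat\sgn(X_s),\,\Delta^+X_s\rangle\\
&\quad+\frac{p}{2}\int_t^T|X_s|^{p-2}{\bf 1}_{\{X_s\neq0\}}
\big\{(2-p)\big(1-\langle\hat\sgn(X_s),Q^{X^*}_s\hat\sgn(X_s)\rangle\big)+(p-1)\big\}\,d[X^*]^c_s\\
&\quad+(L_T-L_t){\bf 1}_{\{p=1\}}+\big(J^-_T(p)-J^-_t(p)\big)+\big(J^+_T(p)-J^+_t(p)\big),
\end{align*}
where the right-jump sums combine via $\sum_{s<T}-\sum_{s<t}=\sum_{t\le s<T}$ and $L$, present only for $p=1$, is continuous and increasing with $L_0=0$.

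The decisive step is to split the $d[X^*]^c$-integral according to the two summands in the braces. The summand carrying the factor $(p-1)$ contributes $\frac{p}{2}(p-1)\int_t^T|X_s|^{p-2}{\bf 1}_{\{X_s\neq0\}}\,d[X^*]^c_s$, and because $\frac{p}{2}(p-1)=\frac{p(p-1)}{2}$ this is exactly the curvature term to be kept on the left-hand side of the asserted inequality. Writing
\[
A:=\frac{p(2-p)}{2}\int_t^T|X_s|^{p-2}{\bf 1}_{\{X_s\neq0\}}
\big(1-\langle\hat\sgn(X_s),Q^{X^*}_s\hat\sgn(X_s)\rangle\big)\,d[X^*]^c_s
\]
for the remaining summand, and bringing $|X_t|^p$, the retained curvature term and the two jump differences to the left, the identity becomes an exact equality between the left-hand side of the claim and $|X_T|^p$ plus the first-order (stochastic-integral and right-jump) terms, minus $A$ and minus $(L_T-L_t){\bf 1}_{\{p=1\}}$.

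It then remains to observe that $A\ge0$ and $(L_T-L_t){\bf 1}_{\{p=1\}}\ge0$, so that both may be dropped, turning the equality into the desired ``$\le$''. Nonnegativity of $A$ follows from $2-p\ge0$ for $p\le2$, from the pointwise estimate $1-\langle\hat\sgn(X_s),Q^{X^*}_s\hat\sgn(X_s)\rangle\ge0$ established in \mbox{\rm(\ref{eq5.8})} (since $Q^{X^*}_s$ is symmetric, nonnegative and of unit trace), and from $d[X^*]^c$ being a nonnegative measure; the local-time term is nonnegative because $L$ is increasing, and it vanishes identically when $p>1$. I do not expect any genuine obstacle: the one point requiring care is the sign bookkeeping---isolating the $(p-1)$ coefficient so that the retained curvature term appears with precisely the constant $\frac{p(p-1)}{2}$, and verifying that the residual curvature term $A$ together with the local-time term has the correct sign to be discarded.
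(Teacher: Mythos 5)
Your overall strategy coincides with the paper's: its entire proof of this corollary is the one line ``Follows from Corollary \ref{cor3} and (\ref{eq5.8})'', i.e.\ exactly your plan of applying the It\^o-type identity on $[t,T]$ and discarding the two terms of favourable sign. Your identity for $|X_T|^p-|X_t|^p$ is correct, as are the nonnegativity observations about $A$ and the local-time increment. The genuine error is in your ``decisive step''. Write
\[
|X_T|^p-|X_t|^p=\mathrm{SI}+\mathrm{RJ}+\mathrm{curv}+A+(L_T-L_t){\bf 1}_{\{p=1\}}+\bigl(J^-_T(p)-J^-_t(p)\bigr)+\bigl(J^+_T(p)-J^+_t(p)\bigr),
\]
where $\mathrm{SI}=p\int_t^T|X_{s-}|^{p-1}\langle\hat\sgn(X_{s-}),\,dX^*_s\rangle$, $\mathrm{RJ}=p\sum_{t\le s<T}|X_s|^{p-1}\langle\hat\sgn(X_s),\,\Delta^+X_s\rangle$ and $\mathrm{curv}=\frac{p(p-1)}{2}\int_t^T|X_s|^{p-2}{\bf 1}_{\{X_s\neq0\}}\,d[X^*]^c_s$. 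Moving $|X_t|^p$, $\mathrm{curv}$ and the two jump differences to the left yields
\[
|X_t|^p+\mathrm{curv}+\bigl(J^-_T(p)-J^-_t(p)\bigr)+\bigl(J^+_T(p)-J^+_t(p)\bigr)
=|X_T|^p-\mathrm{SI}-\mathrm{RJ}-A-(L_T-L_t){\bf 1}_{\{p=1\}},
\]
so the first-order terms come out with \emph{minus} signs, not the plus signs you assert. Dropping $A\ge0$ and the local-time term therefore gives the inequality with $-\mathrm{SI}-\mathrm{RJ}$ on the right; the plus-sign inequality you end with does not follow, and is in fact false in general: for $d=1$, $X=B$ a Brownian motion, $p=2$, $t=0$, it would read $T\le B_T^2+2\int_0^TB_s\,dB_s=2B_T^2-T$, which fails with positive probability.

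What obscures this is that the statement as printed carries the same sign slip (together with the misprint $\int_0^t$ where $\int_t^T$ is meant in the curvature term, which you silently corrected). The version that actually follows from Corollary \ref{cor3} and (\ref{eq5.8}), and the one actually invoked in the proof of Proposition \ref{prop2.2} --- there the BSDE dynamics make $-dX^*_s$ equal to the driver and $dV$ differentials plus $-(Z^1_s-Z^2_s)\,dB_s$, which is precisely why the displayed inequality there shows those terms with plus signs and the martingale term with a minus sign --- is
\[
|X_t|^p+\mathrm{curv}+\bigl(J^-_T(p)-J^-_t(p)\bigr)+\bigl(J^+_T(p)-J^+_t(p)\bigr)\le|X_T|^p-\mathrm{SI}-\mathrm{RJ}.
\]
So your reduction to Corollary \ref{cor3} and (\ref{eq5.8}) is the right (and the paper's) approach, but you must redo the sign bookkeeping and state the conclusion in this corrected form; as written, your ``exact equality'' is not an identity and the inequality you deduce from it is not true.
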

\begin{proof} Follows from Corollary \ref{cor3} and
(\ref{eq5.8}).
\end{proof}

\noindent{\bf Acknowledgements}\\
Research supported by  NCN grant no. 2012/07/B/ST1/03508.


\end{document}